\definecolor{darkblue}{rgb}{0,0,.7}
\newlist{alphenum}{enumerate}{1}
\setlist[alphenum]{fullwidth,label={(\alph*)}}
\theoremstyle{definition}
\newtheorem{theorem}{Theorem}[section]
\newtheorem{remark}[theorem]{Remark}
\newtheorem{lemma}[theorem]{Lemma}
\newcommand{\R}{\mathbb{R}}
\newcommand{\BDM}{\mathrm{BDM}}
\newcommand{\Hdiv}{H(\divergence,\Omega)}
\newcommand{\jump}[1]{[\![#1]\!]}
\newcommand{\Curl}{\curl}
\newcommand{\vecb}[1]{\bm{#1}}
\newcommand{\bvecb}[1]{\bar{\bm{#1}}}
\newcommand{\divergence}{\operatorname{div}}
\renewcommand{\div}{\operatorname{div}}
\newcommand{\curl}{\operatorname{curl}}
\newcommand{\id}{\mathop{\mathbf{id}}} 
\newcommand{\primalVh}{{\color{black}\vecb{\bar V}_h}}
\newcommand{\primalQh}{{\color{black}\bar Q_h}}
\newcommand{\primaluh}{{\color{black}\vecb{\bar u}_h}}
\newcommand{\primalvh}{{\color{black}\vecb{\bar v}_h}}
\newcommand{\primalqh}{{\color{black}\bar q_h}}
\newcommand{\primalph}{{\color{black}\bar p_h}}
\newcommand{\eqflux}{{\color{black}\sigma_h}}
\newcommand{\pseudostress}{{\color{black}\widetilde{\sigma}}}
\newcommand{\identity}{{\color{black}I_{d \times d}}}
\begin{document}

\title[Pressure-robust guaranteed error control for Stokes equations]{Guaranteed upper bounds for the velocity error of pressure-robust Stokes discretisations}

\author
{P.L. Lederer} 
\email{philip.lederer@tuwien.ac.at}
\address{ Institute for Analysis and Scientific Computing, Wiedner
 Hauptstra{\ss}e 8-10, 1040 Wien, Austria} 
 
 \author{C. Merdon}
 \email{christian.merdon@wias-berlin.de}
 \address{Weierstrass Institute for Applied Analysis and Stochastics,
Mohrenstr. 39, 10117 Berlin, Germany}

\begin{abstract}
  {This paper aims to improve guaranteed error control for the Stokes problem
  with a focus on pressure-robustness, i.e.\ for discretisations that
  compute a discrete velocity that is independent of the exact
  pressure. A Prager--Synge type result relates the velocity errors of
  divergence-free primal and perfectly equilibrated dual mixed methods
  for the velocity stress.
The first main result of the paper is a framework with
relaxed constraints on the primal and dual method.
This enables to use a recently developed mass
conserving mixed stress discretisation for the design of equilibrated fluxes
and to obtain pressure-independent guaranteed upper bounds for any
pressure-robust (not necessarily divergence-free) primal
discretisation. The second main result is a provably efficient local design of the
equilibrated fluxes with comparably low numerical costs.
Numerical examples verify the theoretical findings and show that
efficiency indices of our novel guaranteed upper bounds
are close to one.}

\medskip
{Keywords: incompressible Navier--Stokes equations,  mixed finite
elements,  pressure-robustness,  a posteriori error estimators,
equilibrated fluxes, adaptive mesh refinement}
\end{abstract}
\maketitle

\section{Introduction}


In recent years many pressure-robust discretisations for the Stokes
equations were found and propagated that avoid a consistency error
that is connected to a relaxation of the divergence constraint
\cite{sirev2017,doi:10.1137/17M1138078,Lin14,LM2016,2016arXiv160903701L,10.1093/imanum/drz022,zhao2020pressure}.
In non-pressure-robust discretisation this consistency error can cause
severe discretisation errors in presence of large irrotational forces
in the right-hand side forcing or, more importantly, in the material
derivative of the full Navier--Stokes equations
\cite{SMAI-JCM_2019__5__89_0, ahmed2020pressurerobust}. Pressure-robustness is achieved by
using divergence-free finite element methods like
\cite{scott:vogelius:conforming,GN14b,FN13}, but also many classical
non-divergence-free methods can be turned pressure-robust by employing
a reconstruction operator on the test functions when pairing them with
irrotational forces \cite{sirev2017,LM2016,2016arXiv160903701L}. As a result, a pressure-robust method allows
pressure-independent a priori velocity error estimates.

In terms of a posteriori error control the pressure-robustness
property of a scheme also allows, in principle, separate error control
of the velocity alone and adaptive mesh refinement that is not
polluted by a concentration on the pressure error. However, this
requires that the evaluation of the a posteriori error estimator
itself is pressure-independent. There is a long history on a
posteriori error control for the Stokes problem
\cite{MR993474,carstensen2012,MR2164088,ccmerdon:nonconforming2,MR3064266,MR2995179,Repin2010}
but none of them can be considered to be fully suitable for the error
control of the velocity error of a pressure-robust discretisation of
the Stokes problem. Recent refined residual-based approaches in the
spirit of \cite{Lederer2019,kanschat2014} enable a pressure-robust
error control of the velocity error by applying the \(\curl\) operator
to the residual, following the general idea that the velocity is only
determined by the underlying vorticity equation. 

In this paper we turn
our interest to guaranteed error control for the velocity and thereby
refine existing approaches in
\cite{MR2995179,Repin2010,bringmann2016,MR3366087,merdon-thesis} that
can become inefficient when used to estimate the velocity error of
pressure-robust discretisations.

For simplicity, consider the Stokes problem in \(d\) dimensions
\begin{alignat*}{2}
  - \nu \Delta \vecb{u} + \nabla p & = \vecb{f} &\quad \textrm{on } \Omega,\\
    \divergence(\vecb{u}) & = 0 &\quad \textrm{on } \Omega.
\end{alignat*} 
with homogeneous Dirichlet boundary data and some right-hand side forcing $\vecb{f}$. Here $\vecb{u}$ and $p$
denote the velocity and pressure, respectively, and $\nu$ is the
kinematic viscosity. There are situations where $\vecb{f}$ can have a
large irrotational forcing, e.g.\ an approximation to the the material
derivative $\vecb{u}_t + (\vecb{u} \cdot \nabla) \vecb{u}$. Hence,
looking at the unrescaled Stokes model problem with $\nu \neq 1$ is
reasonable and numerical discretisations that are robust with respect
to large pressures or small viscosities $\nu$ are desirable.

Coming back to a posteriori error control, a unified approach as e.g.\ in \cite{carstensen2012,MR2995179}
rewrites many second order elliptic problems  into the form
\begin{align}\label{intro:generalform}
 - \divergence (\pseudostress) = \vecb{f} \quad \text{on } \Omega.
\end{align}
which is also possible for the Stokes problem above utilizing the
pseudo-stress \( \pseudostress := \nu \nabla \vecb{u} - p\identity\),
where $\identity$ is the $d$-dimensional identity matrix. Hence, the
application of the classical a posteriori error estimators for
(vector-valued) Poisson problems, in particular guaranteed upper
bounds like
\cite{Destuynder99expliciterror,Luce2004ALA,Braess:2008,doi:10.1137/130950100,APosterioriEstimatesforPartialDifferentialEquations,gedicke2020polynomialdegreerobust},
also work for the Stokes problem. However, this approach in general
leads to pressure-dependent velocity error estimators or estimators
that are only efficient with respect to the combined velocity and
pressure error. 

Going one step back there is also the famous Prager--Synge theorem \cite{MR0025902,bertr2019pragersynge} (originally for linear elasticity) that is nothing else than a Pythagoras theorem in $L^2$-norms, i.e.\
  \begin{align*}
    \| \nabla(\vecb{u} - \primaluh) \|_{L^2(\Omega)}^2 + \nu^{-1} \| \sigma - \eqflux \|_{L^2(\Omega)}^2
    = \| \nabla \primaluh - \nu^{-1} \eqflux \|_{L^2(\Omega)}^2,
  \end{align*}
  where \(\primaluh\) and \(\eqflux\) can be understood as some (not necessarily discrete) approximations to \(\vecb{u}\) and its stress \(\sigma := \nu \nabla \vecb{u}\).
  If both approximations are known the quantity on the right-hand side yields an a posteriori error estimator for both errors on the left-hand side.
  However, \(\primaluh\) and \(\eqflux\) have to satisfy some properties for the equality above to hold. In our Stokes setting it is required that $\vecb{u},\primaluh \in \vecb{V}_0$ and
  \begin{align}\label{intro_eqcondition}
    \int_\Omega (\divergence (\eqflux) + \vecb{f}) \cdot\vecb{v} \dif x = 0 
    \quad \text{for all } \vecb{v} \in \vecb{V}_0
  \end{align}
  where \(\vecb{V}_0\) is the subspace of divergence-free
  \(\vecb{H}^1_0(\Omega)\) functions. In comparison with
  \eqref{intro:generalform} where the divergence constraint for the
  fixed pseudo-stress has to hold pointwise for the whole space
  \(\vecb{H}^1_0(\Omega)\), equation \eqref{intro_eqcondition} is not
  only satisfied by $\eqflux$ but also by $\eqflux - q \identity$ for
  any $q \in H^1(\Omega)$ due to \(\int_\Omega \nabla q \cdot \vecb{v} \dif x
  = - \int_\Omega q \divergence (\vecb{v}) \dif x = 0 \) for all \(\vecb{v} \in
  \vecb{V}_0\). Hence, the stress approximation \(\eqflux\) can be
  gauged by any gradient force to e.g.\ mimic the pseudo-stress
  \(\pseudostress\).

  Consider now some discrete velocity \(\primaluh\) and pressure \(\primalph\) from some possibly pressure-robust discretisation of the Stokes equations.
  Many equilibration error estimators, see e.g.\ \cite{MR2995179} where unfortunately only \(\nu = 1\) is examined, employ the discrete pseudo-stress and fix the gauging freedom by the discrete pressure \(q = \primalph\), e.g.\ they compute \(\pseudostress_h\) with
  \begin{align}\label{intro:discrete_eqconstraints}
   \divergence(\pseudostress_h) + \vecb{\pi}_{k-1} \vecb{f} =
   \divergence(\eqflux + \primalph \identity) + \vecb{\pi}_{k-1} \vecb{f} = 0
  \end{align}
  where \(\vecb{\pi}_{k-1}\) is the $L^2$ best-approximation into the piecewise polynomials of order \(k-1\) and \(k\) is related to the expected rate of the primal method. Following e.g.\ \cite{MR2995179}, one can compute equilibrated
  fluxes \(\pseudostress_h\) with this constraint that are close to
  the discrete pseudo-stress \(\nu \nabla \primaluh + \primalph
  \identity\) and would get the guaranteed upper bound
  \begin{align}\label{intro:classical_estimator}
    \| &\nabla(\vecb{u} - \primaluh) \|^2_{L^2(\Omega)} \nonumber\\
    &\leq \nu^{-1}\sum_{T \in \mathcal{T}} \left( \frac{h_T}{\pi} \| (\id - \vecb{\pi}_{k-1}) \vecb{f}\|_{L^2(T)} +  \| \pseudostress_h - \primalph \identity - \nu \nabla \primaluh \|_{L^2(T)} \right)^2
  \end{align}
  which up to the oscillation term resembles the Prager--Synge calculus for \(\eqflux = \pseudostress_h - \primalph \identity\).
  However, the numerical examples below demonstrate that the dependence of this error estimator on the discrete pressure can cause
  arbitrarily large efficiency indices
  $ \eta/\| \nabla(\vecb{u} -\primaluh) \|$
  in pressure-dominant
  situations where $\vecb{f}$ has a large irrotational part, which
  can also affect the quality of the adaptive mesh refinement.
  
  
  
  In this paper we propose a novel equilibration design that avoids the pseudo-stress
  approach altogether and instead ensures the Prager--Synge equilibration condition
  \eqref{intro_eqcondition} for an $H(\divergence)$-conforming subspace of \(\vecb{V}_0\).
  This is done with the
  help of the recently developed mass conserving mixed stress
  formulation \cite{10.1093/imanum/drz022}. The resulting error
  estimator
  for a divergence-free discretisation,
  with a comparable approximation order \(k\) for the equilibrated fluxes, structurally looks
  very similar to \eqref{intro:classical_estimator}, but consists of
  the terms
  \begin{align*}
    \| &\nabla(\vecb{u} - \primaluh) \|^2_{L^2(\Omega)} \\
    &\leq \nu^{-1} \sum_{T \in \mathcal{T}} \left( c h_T^2 \| (\id - \vecb{\pi}_{k-2}) \curl (\vecb{f}) \|_{L^2(T)} +\| \mathrm{dev}(\eqflux - \nu \nabla \primaluh) \|_{L^2(T)} \right)^2,
   \end{align*}
   where \(\mathrm{dev}(A):= A -
\frac{\mathrm{tr}(A)}{d} \identity,\) is the deviatoric part of a matrix \(A \in
\mathbb{R}^{d \times d}\),
and $\mathrm{tr}(A)$ is its matrix trace. Note
that there is no dependency on the pressure or the irrotational part
of \(\vecb{f}\). The unfortunately unknown constant \(c\) stems from
approximation properties of commuting interpolators and only depends
on the shape of the cells in the triangulation. A more general result
also shows guaranteed upper bounds for non-divergence-free but
pressure-robust discretisations by accounting for the additional
divergence of the discrete velocity. The final main result of the
paper concerns a less costly localized pressure-robust design for the equilibrated
fluxes based on small problems on node patches, which is shown to be locally efficient.
  
  \medskip
  The rest of the paper is organised as follows.
Section~\ref{sec:preliminaries} introduces the Stokes model problem
and a related Prager--Synge-type theorem.
Section~\ref{sec:probust_discretisations} recalls variants of
pressure-robust discretisations of the Stokes problem in the primal
formulation. After shortly summarising classical equilibration error
estimator approaches in Section~\ref{sec:suboptimal_equilibration},
Section~\ref{sec:relaxedcontrol} proves a novel framework for
pressure-independent guaranteed upper bounds in the spirit of the
Prager--Synge theorem but with relaxed constraints. A global design of
suitable equilibrated fluxes for the novel framework based on a
recently developed mass-conserving mixed stress formulation is
discussed in Section~\ref{sec:mixedmethod}. The less costly local
design for the equilibrated fluxes is presented in
Section~\ref{sec:localdesign}. Section~\ref{sec:efficiency} is
concerned with the local efficiency of the local error estimator
estimators. Finally, Section~\ref{sec:numerics} shows in several
numerical examples that the novel upper bounds are indeed
pressure-independent and allow very sharp error control and optimal
adaptive mesh refinement for the velocity error of pressure-robust
discretisations.

\medskip

Throughout this work we use bold-face notation for vector-valued functions
and spaces, but stick to a standard notation for matrix-valued
functions and spaces to increase readability. We denote by
$L^2(\Omega)$ the space of square integrable functions and by
$H^s(\Omega)$ the standard Sobolev space with regularity~$s$. Of
special interest is the $H^1$ space with homogeneous boundary
conditions denoted by $H^1_0(\Omega)$. Restricted $L^2$-norms on
subsets $\omega \subset \Omega$ are denoted by $\| \cdot \|_\omega$,
while for $\omega = \Omega$ we simply write $\| \cdot \|$. The
$L^2$-inner product on $\omega$ and $\Omega$ is written as
$(\cdot,\cdot)_\omega$ and  $(\cdot,\cdot)$, respectively. For high order
Sobolev spaces we use the standard notation, hence $\| \cdot
\|_{H^s(\omega)}$ denotes the $H^s$-norm on $\omega$, and as before,
$\| \cdot \|_{H^s} = \| \cdot \|_{H^s(\Omega)}$.

\section{The Stokes model problem and a Prager--Synge theorem}\label{sec:preliminaries}
This section recalls the continuous Stokes model problem
and a characterisation of pressure-robustness. Then, a Prager-Synge theorem
for the Stokes problem as a point of departure for a posteriori error control is discussed.

\subsection{The Stokes model problem}
Given \(\vecb{f} \in L^2(\Omega)\) on some open, bounded domain
\(\Omega \subset \mathbb{R}^d\) ($d = 2,3$) with polygonal or
polyhedral boundary, the Stokes model problem with homogeneous Dirichlet boundary data
seeks a velocity \(\vecb{u} \in \vecb{V} := \vecb{H}^1_0(\Omega)\) and
some pressure \(p \in Q:= L^2_0(\Omega) = \lbrace q \in L^2(\Omega)
: (q,1) = 0\rbrace\) with
\begin{alignat*}{2}
  - \nu \Delta \vecb{u} + \nabla p & = \vecb{f} &\quad \textrm{on } \Omega,\\
    \divergence(\vecb{u}) & = 0 &\quad \textrm{on } \Omega,
\end{alignat*}          
where $\nu>0$ is the kinematic viscosity. The regularity assumptions of
 \(\vecb{u}\) and \(p\) above allow to expect a weak solution that
 satisfies
\begin{alignat}{2} 
  \label{eq::stokes_ex}
  \nu (\nabla \vecb{u}, \nabla \vecb{v}) - (p, \divergence(\vecb{v}))
  &= (\vecb{f},\vecb{v}) &\quad& \text{for all } \vecb{v} \in \vecb{V},\\
  (\divergence(\vecb{u}),q) &= 0 &\quad& \text{for all } q \in Q.
\end{alignat}

Note that the pressure acts as a Lagrange multiplier for the
divergence constraint. Within the subspace of divergence-free functions
\begin{align*}
  \vecb{V}_0 := \lbrace \vecb{v} \in \vecb{V} : \divergence(\vecb{v}) = 0 \rbrace
  			 = \lbrace \vecb{v} \in \vecb{V} :\forall q \in Q, \;(q,\divergence(\vecb{v})) = 0 \rbrace.
\end{align*}
the weak velocity solution \(\vecb{u}\) can also be characterised by
requiring \(\vecb{u} \in \vecb{V}_0\) and
\begin{align*}
  \nu (\nabla \vecb{u}, \nabla \vecb{v}) = (\sigma, \nabla \vecb{v}) = (\vecb{f},\vecb{v}) \quad \text{for all } \vecb{v} \in \vecb{V}_0,
\end{align*}
with its exact stress given by $\sigma := \nu \nabla \vecb{u}$.


\subsection{Characterising pressure-robustness}
According to the Helmholtz-Hodge decomposition, see e.g. \cite{GR86} for a proof,
any force \(\vecb{f} \in \vecb{L}^2(\Omega)\) can be uniquely decomposed into
\begin{align*}
 \vecb{f} = \nabla q + \mathbb{P} \vecb{f},
\end{align*}
with \(q \in {H}^1(\Omega) / \mathbb{R}\) and the (unique) divergence-free
Helmholtz--Hodge projector
\[\mathbb{P} \vecb{f} \in \lbrace \vecb{v} \in \vecb{L}^2(\Omega) : \forall  q \in
H^1(\Omega),
(\vecb{v}, \nabla q) = 0 \rbrace.\]

Due to \((\nabla q, \vecb{v}) = -(q,
\divergence \vecb{v}) = 0\) for all \(\vecb{v} \in \vecb{V}_0\), the
velocity solution \(\vecb{u}\) of the Stokes problem is not affected
by any gradient force. Indeed, from the above decomposition we see that 
\begin{align*}
  \nu (\nabla \vecb{u}, \nabla \vecb{v}) = (\mathbb{P}\vecb{f},\vecb{v}) \quad \text{for all }  \vecb{v} \in \vecb{V}_0.
\end{align*}
A discretisation that computes a velocity that preserves this property, and is independent of any gradient force $\nabla q$ that
is added to the right-hand side, is called pressure-robust (since the pressure gradient is part of the irrotational part of $\vecb{f}$), see
\cite{sirev2017,LM2016} for details.

\subsection{A Prager--Synge-type result for the Stokes system}
This section states a Pythagoras theorem for the Stokes system similar
to that of Prager and Synge for the Poisson model problem and the
linear elasticity problem \cite{MR0025902,bertr2019pragersynge}. The
Prager--Synge theorem relates the error of primal and  equilibrated
mixed approximations of the flux \(\nabla \vecb{u}\) (or
$\varepsilon(\vecb{u})$ in elasticity) and gives rise to guaranteed error
control by the design of equilibrated fluxes for these problems. The
analogue in the context of the Stokes model problem for the flux of
the velocity \(\sigma = \nu \nabla \vecb{u}\) reads as follows.

\begin{theorem}\label{thm:PragerSyngeStokes}
  Consider the solution \(\vecb{u} \in \vecb{V}_0(\Omega)\) of the
  Stokes equation \eqref{eq::stokes_ex}, any function \(\vecb{u}^\text{approx}
  \in \vecb{V}_0(\Omega)\) and any \(\sigma^\text{approx} \in H(\divergence,\Omega)\) with
  the equilibration constraint
  \begin{align}\label{eqn:EQcondition_pindependent}
    (\vecb{f} + \divergence (\sigma^\text{approx}),\vecb{v})= 0 
    \quad \text{for all } \vecb{v} \in \vecb{V}_0.
  \end{align}
Then, there holds the Pythagoras theorem
  \begin{align*}
    \| \nabla(\vecb{u} - \vecb{u}^\text{approx}) \|^2 + \nu^{-1} \| \sigma - \sigma^\text{approx} \|^2
    = \| \nabla \vecb{u}^\text{approx} - \nu^{-1} \sigma^\text{approx} \|^2.
  \end{align*}
\end{theorem}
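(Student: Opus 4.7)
The plan is a Pythagorean expansion of the right-hand side. Introduce $\vecb{w} := \vecb{u}^{\text{approx}} - \vecb{u}$, which lies in $\vecb{V}_0$ since both $\vecb{u}$ and $\vecb{u}^{\text{approx}}$ do. Then decompose
\begin{align*}
\nabla \vecb{u}^{\text{approx}} - \nu^{-1}\sigma^{\text{approx}} = \nabla \vecb{w} + \bigl(\nabla \vecb{u} - \nu^{-1}\sigma^{\text{approx}}\bigr)
\end{align*}
and expand the square. The term $\|\nabla \vecb{w}\|^2$ matches the first summand on the left, and $\|\nabla \vecb{u} - \nu^{-1}\sigma^{\text{approx}}\|^2$ rewrites in terms of $\|\sigma - \sigma^{\text{approx}}\|^2$ using $\sigma = \nu\nabla \vecb{u}$. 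The only non-trivial step is then to verify that the cross term $(\nabla \vecb{w},\,\nabla \vecb{u} - \nu^{-1}\sigma^{\text{approx}})$ vanishes.

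For the cross term, I split it into two contributions. On the $\nabla \vecb{u}$ piece, apply the weak formulation \eqref{eq::stokes_ex} with the divergence-free test function $\vecb{w} \in \vecb{V}_0$; the pressure term drops out automatically, producing $\nu(\nabla \vecb{u}, \nabla \vecb{w}) = (\vecb{f}, \vecb{w})$. On the $\sigma^{\text{approx}}$ piece, use $\sigma^{\text{approx}} \in H(\divergence,\Omega)$ and $\vecb{w} \in \vecb{H}^1_0(\Omega)$ to integrate by parts, giving $(\nabla \vecb{w}, \sigma^{\text{approx}}) = -(\vecb{w}, \divergence \sigma^{\text{approx}})$. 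Combining and factoring out $\nu^{-1}$ yields
\begin{align*}
\bigl(\nabla \vecb{w},\,\nabla \vecb{u} - \nu^{-1}\sigma^{\text{approx}}\bigr) = \nu^{-1}\bigl(\vecb{f} + \divergence \sigma^{\text{approx}},\,\vecb{w}\bigr),
\end{align*}
which vanishes by the equilibration hypothesis \eqref{eqn:EQcondition_pindependent} applied with the test function $\vecb{w} \in \vecb{V}_0$.

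There is no genuinely hard step; the argument is a one-line orthogonality computation dressed up with integration by parts. The only subtlety worth flagging is a structural one, and it is the whole point of the statement: both approximations must be divergence-free (so that $\vecb{w}$ lies in $\vecb{V}_0$) and the equilibration constraint is phrased only as a test against $\vecb{V}_0$. This weaker formulation, in place of a pointwise $-\divergence \sigma^{\text{approx}} = \vecb{f}$, is exactly what lets the pressure and any irrotational perturbation of $\sigma^{\text{approx}}$ drop out of the identity, and it is the feature exploited later to gain the gauging freedom $\sigma^{\text{approx}} \mapsto \sigma^{\text{approx}} - q\,\identity$ noted in the introduction.
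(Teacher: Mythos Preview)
Your proof is correct and follows essentially the same route as the paper: both expand the right-hand side around the decomposition $\nabla \vecb{u}^{\text{approx}} - \nu^{-1}\sigma^{\text{approx}} = \nabla(\vecb{u}^{\text{approx}}-\vecb{u}) + (\nabla \vecb{u} - \nu^{-1}\sigma^{\text{approx}})$ and kill the cross term by combining the weak formulation (to replace $\nu(\nabla \vecb{u},\nabla \vecb{w})$ by $(\vecb{f},\vecb{w})$), integration by parts on $\sigma^{\text{approx}}$, and the equilibration constraint \eqref{eqn:EQcondition_pindependent} with test function $\vecb{w}=\vecb{u}-\vecb{u}^{\text{approx}}\in\vecb{V}_0$. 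The paper merely packages the same computation as the polarization identity $\|a\|^2+\|b\|^2-\|a-b\|^2=2(a,b)$ applied to $a=\nabla \vecb{u}-\nu^{-1}\sigma^{\text{approx}}$ and $b=\nabla(\vecb{u}-\vecb{u}^{\text{approx}})$.
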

\begin{proof}
  This follows directly from integration by parts and
  \begin{align*}
    \| \nabla(\vecb{u} - \vecb{u}^\text{approx}) \|^2 &+ \| \nabla\vecb{u} - \nu^{-1} \sigma^\text{approx} \|^2
    - \| \nabla \vecb{u}^\text{approx} - \nu^{-1} \sigma^\text{approx} \|^2 \\
    & = 2(\nabla\vecb{u} - \nu^{-1} \sigma^\text{approx}, \nabla(\vecb{u} - \vecb{u}^\text{approx}))\\
    & = 2\nu^{-1} (\vecb{f} + \divergence (\sigma^\text{approx}), \vecb{u} - \vecb{u}^\text{approx}) = 0.
  \end{align*}
 The last step follows from $\vecb{v} = \vecb{u} - \vecb{u}^\text{approx} \in
  \vecb{V}_0(\Omega)$ and \eqref{eqn:EQcondition_pindependent}. 
\end{proof}
  
Hence, the evaluation of the quantity on the right-hand side for known
approximations \(\sigma^\text{approx}\) and \(\vecb{u}^\text{approx}\)
yields guaranteed upper bounds for the two unknown errors on the
left-hand side of the identity. In this paper we consider
\(\vecb{u}^\text{approx}\) as an approximation from a pressure-robust
discretisation of the Stokes equations. Then, the computation of a
suitable \(\sigma^\text{approx}\) is the task of the a posteriori
error control.

In practise however, both constraints on the function
\(\vecb{u}^\text{approx}\) and on the flux \(\sigma^\text{approx}\) in
Theorem~\ref{thm:PragerSyngeStokes} are hard to realise. Therefore,
Section~\ref{sec:relaxedcontrol} derives guaranteed upper bounds for
\(\vecb{u}^\text{approx}\) that do not necessarily have to stem from a
divergence-free (but pressure-robust) discretisation based on
equilibrated fluxes \(\sigma^\text{approx}\) that satisfy a suitably
discretised version of the equilibration property
\eqref{eqn:EQcondition_pindependent}.

Section~\ref{sec:suboptimal_equilibration} discusses pseudo-stress
related equilibration conditions like \(\vecb{f} - \nabla q +
\divergence (\sigma^\text{approx}) = 0\) for some known (pressure
approximation) \(q\) as they are used by classical equilibrated flux
designs. However, this may lead to a dependency of \(p - q\) in the
efficiency estimates of the resulting error estimator and the whole
focus of pressure-robust discretisations is on how to avoid this.
Therefore Section~\ref{sec:novel_upperbounds} only replaces the
space \(\vecb{V}_0\) in \eqref{eqn:EQcondition_pindependent} by some
\(\Hdiv\)-conforming subspace.

Before that some suitable pressure-robust finite element methods to
compute  \(\vecb{u}^\text{approx}\) are revisited.


\section{Pressure-robust finite element methods for the Stokes problem}
\label{sec:probust_discretisations}
This section recalls pressure-robust discretisations for the primal
velocity-pressure formulation of the Stokes problem. 


\subsection{Notation}
Consider some triangulation \(\mathcal{T}\) of the domain \(\Omega\)
into regular simplices with vertices \(\mathcal{V}\) and faces
\(\mathcal{F}\). The subset of interior faces is denoted by
\(\mathcal{F}(\Omega)\). The diameter of a simplex $T \in \mathcal{T}$
is given by $h_T$. We extend this notation in a similar manner onto
faces and simply write $h_F$ for the diameter of a face $F \in
\mathcal{F}$. Further, if the triangulation is quasi uniform, we
abbreviate the notation and simply write $h$ for the maximum diameter
of all simplices. 

In the following let \(F \in \mathcal{F}\) be some arbitrary
face of an arbitrary element $T \in \mathcal{T}$, i.e. $F \subset
\partial T$.
On $F$ we then denote by $\vecb{n}$ the outward pointing unit normal
with respect to $T$, and 
\begin{align*}
  \vecb{a}_n &:= \vecb{a} \cdot \vecb{n}, \quad \textrm{and} \quad
  \vecb{a}_t := \vecb{a} - (\vecb{a} \cdot \vecb{n}) \vecb{n},
\end{align*}
 denote the scalar-valued normal and the vector-valued tangential
trace of some vector \(\vecb{a} \in \mathbb{R}^d\), respectively.
Further the brackets \(\jump{b}\) denote the jump on a common face $F$
of two adjacent elements of some (scalar or vector-valued) quantity
\(b\).

The space of element-wise (with respect to \(\mathcal{T}\))
polynomials of order \(k\) is denoted by \(P_k(\mathcal{T})\) and the
space of piece-wise vector-valued polynomials of order \(k\) by
\(\vecb{P}_k(\mathcal{T})\). The $L^2$ best-approximation into
\(P_k(\mathcal{T})\) or \(\vecb{P}_k(\mathcal{T})\) reads as $\pi_k$
or $\vecb{\pi}_k$, respectively. 

The spaces
\begin{align*}
  \mathrm{RT}_{k}(\mathcal{T})
  &:= \lbrace \vecb{v}_h \in H(\divergence,\Omega) : \forall T \in \mathcal{T} \, \exists \vecb{a}_T \in \vecb{P}_{k}(T), b_T \in P_k(T), \, \\
  & \hspace{6cm}\vecb{v}_h|_T(\vecb{x})
  = \vecb{a}_T + b_T \vecb{x}\rbrace,\\
  \mathrm{BDM}_{k}(\mathcal{T})
  &:= H(\divergence,\Omega) \cap \vecb{P}_{k}(\mathcal{T})
\end{align*}
read as the Raviart--Thomas and Brezzi--Douglas--Marini functions of
order \(k \geq 0\). Moreover,
\begin{align*}
  \mathcal{N}_{k}(\mathcal{T})
  := \{ \vecb{v}_h \in H(\curl,\Omega) : \forall T \in \mathcal{T} \, \exists \vecb{a}_T \in \vecb{P}_{k}(T), &B_T \in P_k(T)^{d\times d, \operatorname{skw}},\\
  &\vecb{v}_h|_T(\vecb{x}) = \vecb{a}_T + B_T \vecb{x} \}
\end{align*}
denote the space of N\'ed\'elec functions of order \(k \geq 0\), where
$P_k(T)^{d\times d, \operatorname{skw}}$ denotes the space of skew
symmetric matrix-valued polynomials of order $k$. Note that the space
$\mathcal{N}_{k}(\mathcal{T})$ is in the literature, see for example
\cite{MonkBook}, sometimes also called the N\'ed\'elec space of order
$k+1$ (and not $k$ as in this work). However, we used this notation so
that it matches with the definition of the Raviart--Thomas space. For
the analysis we denote by $H^s(\mathcal{T})$ the broken Sobolev space
of order $s$ with respect to the triangulation $\mathcal{T}$, i.e. 
\begin{align*}
  H^s(\mathcal{T}) := \{v \in L^2(\Omega), \forall T \in \mathcal{T}, v|_T \in H^s(T) \},
\end{align*}
with the corresponding norm $\|\cdot\|_{H^s(\mathcal{T})}$, and extend
this notation also to vector-valued versions. 

Finally we introduce the
notation $a \lesssim b$ if there exists a constant $c$ independent of
$a$ and $b$, the viscosity $\nu$ and the meshsize $h_T$ such that $a
\le c b$.

\subsection{Pressure-robust primal discretisations}
This section revisits divergence-free and pressure-robust finite element
methods in a common framework.
Consider an inf-sup stable pair of finite element spaces \(\primalVh
\subset \vecb{V}\) and \(\primalQh \subset Q\) and the associated
discrete Stokes problem: Find
$(\primaluh,\primalph) \in \primalVh \times \primalQh$ such that
\begin{subequations}  \label{primalform}
\begin{align} 
  \nu (\nabla \primaluh, \nabla \primalvh) - (\divergence(\primalvh), \primalph) &= (\vecb{f},\mathcal{R}(\primalvh)) &&  \text{for all } \primalvh \in \primalVh\\
(\divergence(\primaluh), \primalqh) &= 0 && \text{for all } \primalqh \in \primalQh.
\end{align}
\end{subequations}

\begin{table}
  \centering
  \begin{tabular}{ccccc}
    $\primalVh $& $\primalQh $& abbreviation & $r$ & $\mathcal{R}$ \\
    \toprule
    $\vecb{P}_{2}(\mathcal{T}) \cap \vecb{H}_0^1(\Omega) $ & $  P_0(\mathcal{T})$ & P20 & $1$ & $I_{\BDM_1}$ \\
    $\vecb{P}_{3}(\mathcal{T}) \cap \vecb{H}_0^1(\Omega) $ & $ P_1(\mathcal{T})$ & P31 & $2$ & $I_{\BDM_2}$ \\
    $\vecb{P}_{2+}(\mathcal{T})\cap \vecb{H}_0^1(\Omega) $ & $  P_1(\mathcal{T})$ & P2B & $2$ & $I_{\BDM_2}$ \\
    $\vecb{P}^{\text{3d}}_{2,+}(\mathcal{T}) \cap \vecb{H}_0^1(\Omega)$ & $ P_1(\mathcal{T})$ & P2B-3d & $2$ & $I_{\BDM_2}$ \\
    $\vecb{P}_{2}(\mathcal{T}) \cap \vecb{H}_0^1(\Omega)$ & $P_1(\mathcal{T})$ & SV & $2$ & $\id$ \\
    \bottomrule
  \end{tabular}
  \caption{Considered inf-sup stable Stokes pairs including the
  expected order of convergence and the used reconstruction
  operator.} \label{infsuppairs}
\end{table}

Here, \(\mathcal{R}\) denotes some reconstruction operator that
enables pressure-robustness by mapping discretely divergence-free
functions to exactly divergence-free ones. For this and
in view of the expected optimal convergence rate $r$ of the velocity ansatz space,
the reconstruction operator has to satisfy, for all
$\primalvh \in \primalVh, \primalqh \in \primalQh$,
\begin{subequations}
\begin{align} 
 \divergence(\mathcal{R}(\primalvh)) &\in \primalQh   \label{rec_two} \\
 (\divergence(\primalvh), \primalqh) &= (\divergence(\mathcal{R}(\primalvh)), \primalqh) \label{rec_two_b},\\
  (\vecb{f}, \primalvh - \mathcal{R}(\primalvh)) &=   (\vecb{f} - \vecb{\pi}_{r-2}\vecb{f}, \primalvh - \mathcal{R}(\primalvh)) \label{rec_one}.
\end{align}
\end{subequations}
Furthermore, for the local design of the equilibrated fluxes in Section~\ref{sec:localdesign}, we need that the reconstruction operator does not alter continuous linear polynomials, i.e. 
\begin{align} \label{rec_three}
   \mathcal{R}(\primalvh) = \primalvh \quad \text{for all } \primalvh \in \vecb{P}_{1}(\mathcal{T}) \cap \vecb{H}^1_0(\Omega).
\end{align}
Note that for discontinuous pressure discretisations all required
assumptions are satisfied by the standard $\text{BDM}_r$ interpolation
operator denoted by $I_{\textrm{BDM}_r}$.

\medskip
Examples for suitable finite element spaces and corresponding
reconstruction operators can be found in
\cite{sirev2017,Lin14,LMT15,LM2016}. For any divergence-free choice,
like the Scott--Vogelius (SV) finite element, no reconstruction
operator is needed and one can set \(\mathcal{R} = \id\).
Table~\ref{infsuppairs} lists suitable finite elements and their
respective reconstruction operators that are used for our numerical
experiments in Section~\ref{sec:numerics}. Here, $ \vecb{P}_{2,+}$
denotes the space of vector-valued polynomials of order $2$ including
the local cubic element bubbles, i.e. $\vecb{P}_{2,+}(\mathcal{T}) :=
\{q \in \vecb{P}_{3}(\mathcal{T}):  \forall F \in \mathcal{F}, q|_F
\in \vecb{P}_{2}(F) \}.$ In three dimensions, we similarly denote by
$\vecb{P}^{\text{3d}}_{2,+}$ the space of vector-valued polynomials of
order $2$ including the local element bubbles of order 4 and the cubic
face bubbles of order 3. A precise definition is given in example
8.7.2 in \cite{Boffi2008Mixed}. Note that we only consider a
discontinuous pressure approximation in this work since this allows an
element-wise reconstruction operator, see for example
\cite{sirev2017}. However, reconstruction
operators for continuous pressure approximations are also possible but
demand a more complicated construction and a slightly different property
\eqref{rec_one}, see \cite{2016arXiv160903701L} for details.



The following pressure-robust a priori error estimate for the velocity can be expected, see e.g.\ \cite{sirev2017} for a proof.

\begin{theorem}[Pressure-robust a priori error estimates]
  Assume that the velocity solution of the Stokes equations
  \eqref{eq::stokes_ex} fulfills the regularity \(\vecb{u}
  \in \vecb{H}^{m}(\Omega) \cap \vecb{V}_0\) with $m \geq 2$, and let
  $\primaluh$ be the discrete solution of  \eqref{primalform}. There
  holds the error estimate
  \begin{align*}
    \| \nabla(\vecb{u} - \primaluh) \| \lesssim \inf\limits_{\primalvh \in \primalVh} \| \nabla(\vecb{u} - \primalvh)\|
    + h \| (\id - \vecb{\pi}_{r-2}) \Delta \vecb{u} \| \lesssim h^s \| \vecb{u} \|_{H^{s+1}}
  \end{align*}
  where \(s := \min \lbrace m-1,r \rbrace\).
\end{theorem}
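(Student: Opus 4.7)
The plan is a Strang-type analysis specialised to exploit pressure-robustness. I would work inside the discretely divergence-free subspace
\[
  Z_h := \{\primalvh \in \primalVh : (\divergence(\primalvh), \primalqh) = 0 \text{ for all } \primalqh \in \primalQh\},
\]
where properties \eqref{rec_two} and \eqref{rec_two_b} together imply that $\divergence(\mathcal{R}(\primalvh))$ equals the $L^2$-projection of $\divergence(\primalvh)$ onto $\primalQh$ and therefore vanishes pointwise for $\primalvh \in Z_h$. Together with the vanishing normal trace of $\mathcal{R}(\primalvh)$ on $\partial\Omega$, integration by parts applied to $\vecb{f} = -\nu\Delta\vecb{u} + \nabla p$ eliminates the pressure contribution in both $(\vecb{f},\mathcal{R}(\primalvh))$ and in $(\vecb{f},\primalvh) + (\divergence(\primalvh),p)$. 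Subtracting the discrete equation \eqref{primalform} from the continuous Stokes equation for $\primalvh \in Z_h$ then yields the pressure-independent consistency identity
\[
  \nu(\nabla(\vecb{u} - \primaluh), \nabla\primalvh) = -\nu(\Delta\vecb{u}, \primalvh - \mathcal{R}(\primalvh)).
\]

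Property \eqref{rec_one} expresses exactly the $L^2$-orthogonality of $\primalvh - \mathcal{R}(\primalvh)$ to $\vecb{P}_{r-2}(\mathcal{T})$, so $\Delta\vecb{u}$ can be replaced by $(\id - \vecb{\pi}_{r-2})\Delta\vecb{u}$ on the right-hand side. Combined with the elementwise estimate $\|\primalvh - \mathcal{R}(\primalvh)\|_{L^2(T)} \lesssim h_T \|\nabla\primalvh\|_{L^2(\omega_T)}$, a standard Bramble--Hilbert consequence of \eqref{rec_three} on an element patch $\omega_T$, this delivers the consistency bound $|\nu(\nabla(\vecb{u}-\primaluh),\nabla\primalvh)| \lesssim \nu h \|(\id - \vecb{\pi}_{r-2})\Delta\vecb{u}\|\|\nabla\primalvh\|$ for every $\primalvh \in Z_h$. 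To transfer this bound from $Z_h$ to $\primalVh$, I would invoke a Fortin-type interpolator $\Pi_F : \vecb{V} \to \primalVh$ constructed from the inf-sup stability of $(\primalVh,\primalQh)$ and a suitable quasi-interpolation: then $\Pi_F \vecb{u} \in Z_h$ since $\divergence(\vecb{u}) = 0$, and $\|\nabla(\vecb{u} - \Pi_F \vecb{u})\| \lesssim \inf_{\primalvh \in \primalVh}\|\nabla(\vecb{u} - \primalvh)\|$. Testing the identity with $\primalvh = \primaluh - \Pi_F \vecb{u} \in Z_h$ and applying the triangle inequality yields the first stated bound; the final $h^s \|\vecb{u}\|_{H^{s+1}}$ estimate with $s = \min\{m-1,r\}$ is then a routine consequence of polynomial approximation theory.

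The decisive ingredient, and the step I expect to require the most care, is the pressure-cancellation above. The property that distinguishes pressure-robust schemes from classical ones is precisely that $\mathcal{R}$ maps $Z_h$ into the exact kernel of the divergence (and not merely into its $L^2$-projected version on $\primalQh$), which is what converts the usual $\inf_{\primalqh \in \primalQh}\|p - \primalqh\|$ pressure contribution of a Strang-type analysis into the harmless data-oscillation term $h\|(\id - \vecb{\pi}_{r-2})\Delta\vecb{u}\|$. Existence of a Fortin operator with the required approximation properties is standard for each of the pairs in Table~\ref{infsuppairs} and would not require new arguments.
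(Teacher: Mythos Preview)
The paper does not actually prove this theorem; it merely states it and refers to \cite{sirev2017} for a proof. Your proposal is exactly the standard Strang-type argument from that reference: restrict to the discretely divergence-free subspace $Z_h$, use that $\mathcal{R}$ maps $Z_h$ into the exactly divergence-free functions to cancel the pressure, reduce the consistency error to $-\nu(\Delta\vecb{u},\primalvh-\mathcal{R}(\primalvh))$, apply the orthogonality \eqref{rec_one}, and close with a Fortin argument. So your approach and the one the paper points to coincide.

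One small remark on presentation: the local approximation bound $\|\primalvh-\mathcal{R}(\primalvh)\|_{L^2(T)}\lesssim h_T\|\nabla\primalvh\|_{L^2(T)}$ follows from the element-wise definition of $I_{\mathrm{BDM}_r}$ and its local polynomial exactness, so no patch $\omega_T$ is needed and the relevant property is not really \eqref{rec_three} (which concerns only global $\vecb{P}_1\cap\vecb{H}^1_0$ functions) but the standard local reproduction of $\vecb{P}_r(T)$ by the BDM interpolator. This is cosmetic and does not affect the validity of your argument.
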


\begin{remark}
There are also some quasi-optimal a priori error estimates under weaker regularity assumptions, see \cite{0x003b8d45}.
\end{remark}


\section{Drawbacks of classical equilibrated
fluxes}\label{sec:suboptimal_equilibration} 



This section discusses classical non-pressure-robust approaches which
yield guaranteed upper bounds, but with deteriorating efficiency in pressure-dominant situations. 


\subsection{State of the art of classical non-pressure-robust flux equilibration}

In this section we shortly recall state-of-the-art equilibration error
estimators for the Stokes problem from \cite{MR2995179} in view of
Theorem~\ref{thm:PragerSyngeStokes}. For this consider the pseudo-stress reformulation
of the Stokes problem
\begin{align*}
  \pseudostress & := \nu \nabla \vecb{u} - p \identity \quad \text{and} \quad 
  \divergence \pseudostress + \vecb{f} = 0.
\end{align*}
Note that the flux \(\sigma^\text{approx} = \pseudostress + p
\identity = \sigma\) is equilibrated in the sense of
\eqref{eqn:EQcondition_pindependent} and in fact is the optimal choice
for estimating the error between \(\vecb{u}\) and any approximation
\(\vecb{u}^\text{approx} = \primaluh\) in the sense of
Theorem~\ref{thm:PragerSyngeStokes}. However, since \(\vecb{u}\) and
\(p\) are unknown one can instead approximate discrete pseudo
stress variants like
\begin{align*}
  \pseudostress_h & = \nu \nabla \primaluh - q_2 \identity \quad \text{and} \quad 
  \vecb{f} - \nabla q_1 + \divergence(\pseudostress_h + q_2 \identity) = 0
\end{align*}
with (pressure approximations) \(q_1 \in H^1(\Omega)\) or \(q_2 \in
L^2(\Omega)\). For the approximation of \(\pseudostress_h\) one can
employ standard mixed methods for Poisson problems, see e.g.\
Remark~\ref{remark::naive}. In practise
one resorts to the choices \(q_1 = 0\) and \(q_2 = \primalph\) (see
e.g.\ \cite[Theorem 4.1]{MR2995179}) or to \(q_1 = \primalph \in
H^1(\Omega)\) and \(q_2 = 0\) (see e.g.\ \cite[Corollary
5.1]{MR2995179}). The following theorem summarizes the resulting error
estimators.

\begin{theorem}\label{thm:classicalEQ} Consider the discrete Stokes solution
\((\primaluh,\primalph) \in \vecb{H}^1_0(\Omega) \times
L^2_0(\Omega)\) of an inf-sup stable discretisation on some
triangulation \(\mathcal{T}\) with inf-sup constant \(c_0 > 0\)
and its discrete stress \(\bar \sigma_h := \nu \nabla \primaluh\).
For any (pseudo-stress approximation)
\(\pseudostress_h \in H(\divergence,\Omega)\) with
\begin{align}\label{eqn:discrete_pseudostress_eqconstraint}
  \int_T \vecb{f} - \nabla q_1 + \divergence (\pseudostress_h) \dif x = 0
  \quad \text{for all } T \in \mathcal{T} 
\end{align}
and some \(q_1 \in H^1(\Omega)\) and \(q_2 \in
L^2(\Omega)\), it holds $\| \nabla \vecb{u} - \nabla  \primaluh \| \le \widetilde \eta
( \pseudostress_h)$ with the estimator 
  \begin{align*}
    \widetilde \eta
    ( \pseudostress_h)^2    
    & \leq \nu^{-2} \sum_{T \in \mathcal{T}} \Big( \frac{h_T}{\pi} \| \vecb{f} - \nabla q_1 + \divergence (\pseudostress_h )\|_{T} + \| \pseudostress_h + q_2 \identity - \bar \sigma_h \|_{T} \Big)^2 \\
    &\hspace{7cm}+  c_0^{-2}  \| \divergence (\primaluh) \|^2.
  \end{align*}
\end{theorem}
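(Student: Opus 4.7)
\medskip
\noindent\textbf{Proof plan.} The plan is to reduce the estimate to the Prager--Synge situation of Theorem~\ref{thm:PragerSyngeStokes} via an $H^1$-orthogonal decomposition of the error into a divergence-free part and a divergence-carrying lift, then control the lift through the continuous inf-sup condition. Set $\vecb{e} := \vecb{u} - \primaluh$, let $\vecb{v}_0 \in \vecb{V}_0$ be the orthogonal projection of $\vecb{e}$ with respect to the inner product $(\nabla \cdot, \nabla \cdot)$, and set $\vecb{w} := \vecb{e} - \vecb{v}_0$. By construction $\|\nabla \vecb{e}\|^2 = \|\nabla \vecb{v}_0\|^2 + \|\nabla \vecb{w}\|^2$, and since $\divergence \vecb{v}_0 = 0$ and $\divergence \vecb{u} = 0$ the lift satisfies $\divergence \vecb{w} = -\divergence \primaluh$. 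Orthogonality to $\vecb{V}_0$ makes $\vecb{w}$ the $H^1$-minimal element in $\vecb{V}$ with this divergence, so the continuous inf-sup condition with constant $c_0$ yields directly $\|\nabla \vecb{w}\| \leq c_0^{-1} \|\divergence \primaluh\|$, which accounts for the last summand of the bound.

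For the divergence-free part I would combine the projection identity $\|\nabla \vecb{v}_0\|^2 = (\nabla \vecb{e}, \nabla \vecb{v}_0)$ with the continuous momentum equation $\nu (\nabla \vecb{u}, \nabla \vecb{v}_0) = (\vecb{f}, \vecb{v}_0)$ (valid because $\vecb{v}_0 \in \vecb{V}_0$) to obtain
\begin{align*}
\nu \|\nabla \vecb{v}_0\|^2 = (\vecb{f}, \vecb{v}_0) - (\bar \sigma_h, \nabla \vecb{v}_0).
\end{align*}
Adding and subtracting $\divergence \pseudostress_h$ and integrating the newly introduced term by parts (legitimate because $\pseudostress_h \in \Hdiv$ and $\vecb{v}_0$ has vanishing trace) splits the right-hand side into a residual pairing $(\vecb{f} + \divergence \pseudostress_h, \vecb{v}_0)$ and a flux discrepancy $(\pseudostress_h - \bar \sigma_h, \nabla \vecb{v}_0)$. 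Since $\divergence \vecb{v}_0 = 0$, the two gauges $\nabla q_1$ and $q_2 \identity$ pair to zero against $\vecb{v}_0$ and $\nabla \vecb{v}_0$ respectively, and can be inserted for free, producing $(\vecb{f} - \nabla q_1 + \divergence \pseudostress_h, \vecb{v}_0)$ and $(\pseudostress_h + q_2 \identity - \bar \sigma_h, \nabla \vecb{v}_0)$.

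The final step is element-wise. On each $T \in \mathcal{T}$ the equilibration \eqref{eqn:discrete_pseudostress_eqconstraint} makes the residual zero-mean, so I may subtract the mean of $\vecb{v}_0$ on $T$ and bound the residual term by Poincar\'e with the sharp convex-domain constant $h_T/\pi$; the flux discrepancy is handled by a plain Cauchy--Schwarz on $T$. Summing over $T$ and applying Cauchy--Schwarz in the summation index gives $\nu \|\nabla \vecb{v}_0\|^2 \leq \nu \eta_1 \|\nabla \vecb{v}_0\|$ with $\eta_1$ exactly the first summand of $\widetilde\eta(\pseudostress_h)$, hence $\|\nabla \vecb{v}_0\| \leq \eta_1$. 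Plugging both bounds into the Pythagoras identity yields the claim. The one piece of bookkeeping that must be done carefully is verifying that both gauge functions $q_1$ and $q_2$ really drop out because of $\vecb{v}_0 \in \vecb{V}_0$; once that is in place the rest is routine manipulation of the momentum residual and the inf-sup argument.
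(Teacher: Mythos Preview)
Your proof is correct and follows essentially the same route as the paper: the orthogonal decomposition of $\vecb{e}$ into $\vecb{v}_0\in\vecb{V}_0$ and a divergence-carrying remainder is precisely the ``well-known error split'' $\|\nabla(\vecb{u}-\primaluh)\|^2\le\nu^{-2}\|\vecb{r}\|_{\vecb{V}_0^\star}^2+c_0^{-2}\|\divergence(\primaluh)\|^2$ that the paper quotes from the literature, and your treatment of the residual (gauge insertion via $\vecb{v}_0\in\vecb{V}_0$, element-wise Poincar\'e with constant $h_T/\pi$ enabled by \eqref{eqn:discrete_pseudostress_eqconstraint}, then Cauchy--Schwarz) matches the paper line by line. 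The only cosmetic difference is that you work directly with the maximiser $\vecb{v}_0$ of the dual norm instead of writing the supremum explicitly.
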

\begin{proof} 
The point of departure is the well-known error split
\cite{MR2164088,MR2995179,ccmerdon:nonconforming2}
\begin{align*}
\| \nabla \vecb{u} - \nabla \primaluh \|^2 \leq \nu^{-2} \| \vecb{r} \|^2_{\vecb{V}_0^\star} + c_0^{-2} \| \divergence (\primaluh) \|^2
\end{align*}
with the dual norm
\(
  \| \vecb{r} \|_{\vecb{V}_0^\star} := \sup_{\vecb{v} \in \vecb{V}_0 \setminus \lbrace 0 \rbrace}
  \vecb{r}(\vecb{v}) / \| \nabla \vecb{v} \|
\)
of the residual
  \begin{align*}
    \vecb{r}(\vecb{v}) & = 
    (\vecb{f} ,\vecb{v} )
    - ( \bar \sigma_h, \nabla \vecb{v})
    = 
    ( \vecb{f} + \divergence (\pseudostress_h) , \vecb{v})
    + (\pseudostress_h - \bar \sigma_h,\nabla \vecb{v} ).
  \end{align*}
  
  Due to \( (\nabla q_1, \vecb{v}) = 0\), we can add \(\nabla q_1\) 
  under the first integral
  and then \eqref{eqn:discrete_pseudostress_eqconstraint} allows to
  subtract the best-approximation $\vecb{\pi}_0
  \vecb{v}$ into \(\vecb{P}_0(\mathcal{T})\) of $\vecb{v}$ in the first term and employ piece-wise
  Poincar\'e inequalities to obtain
\begin{align*}
   (\vecb{f} - \nabla q_1 + \divergence (\pseudostress_h), \vecb{v})
  & =
   (\vecb{f} - \nabla q_1 + \divergence (\pseudostress_h),\vecb{v} - \vecb{\pi}_0 \vecb{v})\\
  & \leq \sum_{T \in \mathcal{T}} \| \vecb{f} - \nabla q_1 + \divergence (\pseudostress_h) \|_{T} \| \vecb{v} - \vecb{\pi}_0 \vecb{v} \|_{T}\\
  & \leq \sum_{T \in \mathcal{T}} \frac{h_T}{\pi} \| \vecb{f} - \nabla q_1 + \divergence (\pseudostress_h) \|_{T} \| \nabla \vecb{v} \|_{T}.
\end{align*}
Since \( (q_2 \identity , \nabla \vecb{v} ) = 0\), the second term is estimated by
\begin{align*}
  (\pseudostress_h - \bar \sigma_h, \nabla \vecb{v})
  & =  (\pseudostress_h + q_2 \identity - \bar \sigma_h, \nabla \vecb{v})
    \leq \sum_{T \in \mathcal{T}} \| \pseudostress_h + q_2 \identity - \bar \sigma_h \|_{T} \| \nabla \vecb{v} \|_{T}.
\end{align*}
A Cauchy inequality concludes the proof.
\end{proof}

\begin{remark}[Realisations] \label{remark::naive} A possible design
of \(\pseudostress_h\) involves the Raviart-Thomas or
Brezzi-Douglas-Marini finite element spaces of order \(k\) which is
denoted here by \(\vecb{\widetilde V}_h\) and its divergence space denoted
by \(\widetilde Q_h\). Then one computes \(\pseudostress_h = \pseudostress_h^\text{CEQ} \in
(\vecb{\widetilde V}_h)^d\) and \(\vecb{\widetilde u}_h \in (\widetilde Q_h)^d\) such that
\begin{align*}
  (\pseudostress_h^\text{CEQ},\tau_h) + (\vecb{u}_h, \divergence(\tau_h)) & = (\bar \sigma_h - q_2 \identity,\tau_h)
  && \quad \text{for all } \tau_h \in (\vecb{\widetilde V}_h)^d\\
  (\vecb{v}_h, \nu \divergence(\pseudostress_h^\text{CEQ})) & = - (\vecb{f} - \nabla q_1,\vecb{v}_h)
  && \quad \text{for all } \vecb{v}_h \in (\widetilde Q_h)^d
\end{align*}
As stated above, one usually takes the discrete pressure
\((q_1,q_2) = (\primalph,0)\) or \((q_1,q_2) = (0,\primalph)\)
depending on its regularity, which also enables local designs of
equilibrated fluxes as detailed in e.g.\ \cite{MR2995179} or using
component-wise designs known for elliptic problems, see e.g.\
\cite{Destuynder99expliciterror,Luce2004ALA,Braess:2008,doi:10.1137/130950100,APosterioriEstimatesforPartialDifferentialEquations,merdon-thesis,gedicke2020polynomialdegreerobust}.

\end{remark}

\begin{remark}[Efficiency]\label{rem:classical_efficiency}
 In the numerical
 example below, we show that even the best-approximation strategy from the previous Remark (which gives  a lower bound for any local equilibration in the same space)
 with \(q_1 = 0\) and \(q_2 = \primalph\) is not efficient for the
 velocity error alone in a pressure-dominant situation. 
 
 Note that this is not in contradiction with efficiency proofs for
these classical equilibration designs which usually focus on the error
of a combined velocity-pressure norm like \(\| \nabla(\vecb{u} -
\vecb{u}_h)\| + \nu^{-1} \| p - p_h \|\) (see e.g.\ \cite[Theorem
6.1]{MR2995179}). Recall that pressure-robust methods allow for a
velocity error that is independent of the pressure, while in classical
non-pressure-robust methods the velocity error can scale with the
pressure best-approximation error in pressure-dominant situations. In
such a situation, the classical error estimator, when applied to a
pressure-robust method, will still be efficient for measuring the
dominating pressure error, but not for the (much smaller) velocity
error. As the efficiency of equilibration error estimators is usually
traced back to the efficiency of the explicit estimator, the
interested reader can find a deeper discussion in \cite{Lederer2019}
for classical explicit standard-residual estimators.
 
 One way to possibly improve efficiency in these pressure-dependent designs is the
 pre-computation of a better pressure approximation \(q_2\) as it has
 been suggested e.g.\ in \cite{MR3366087}. However, in situations were
 the pressure is complicated or non-smooth this comes at highly
 increased numerical costs. 
\end{remark}

\begin{figure}
  \begin{center}
  \includegraphics[]{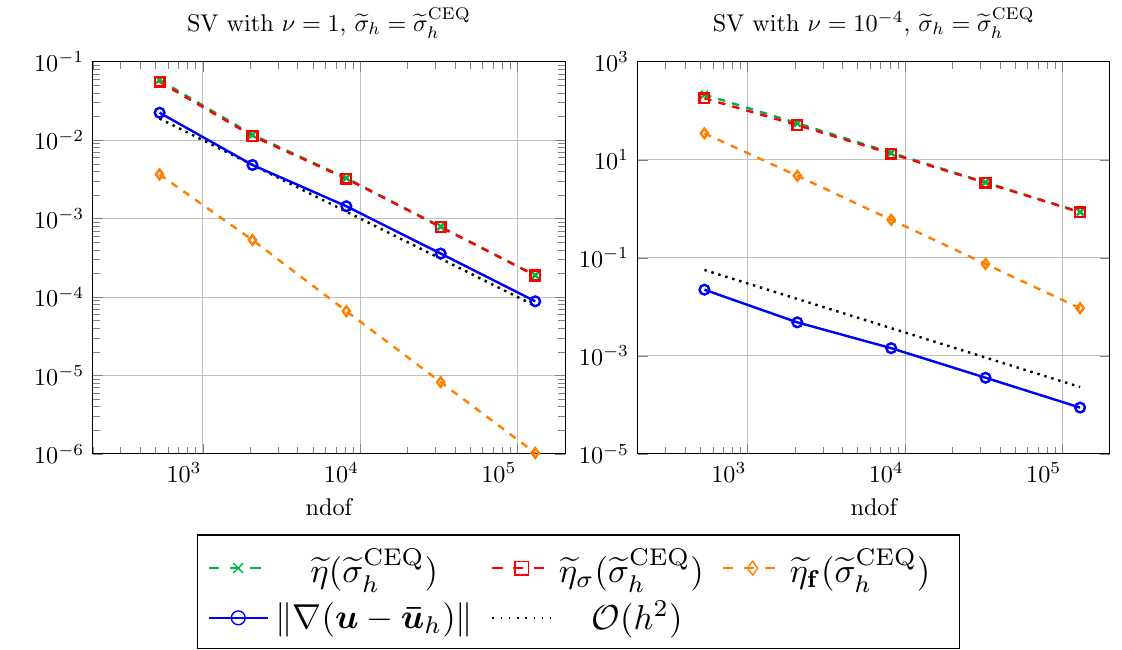}
  \vspace{5pt}
  \caption{Convergence history of the exact error and the
  classical error estimator quantities on uniformly refined meshes for
  SV with \(\nu=1\) (left) and \(10^{-4}\) (right).}
  \label{fig:exzero}
  \end{center}
\end{figure}

\subsection{Numerical example} \label{sec::numericsclassical}

In the following we demonstrate a possible deterioration of the
efficiency of classical equilibration estimators even in the case of
very smooth but pressure-dominant solutions. To this end we consider
the Stokes problem on a unit square domain \(\Omega = (0,1)^2\) with
the smooth prescribed solution
\begin{align*}
  \vecb{u}(x,y) := \curl \left(x^2(1-x)^2y^2(1-y)^2\right) \quad \text{and} \quad  p(x,y) := x^5 + y^5 - 1/3
\end{align*}
with matching right-hand side \(\vecb{f} := - \nu \Delta \vecb{u} + \nabla p\) for variable viscosity \(\nu\).

We denote by $\eta^\text{CEQ}$ the estimator of Theorem
\ref{thm:classicalEQ} where
$\pseudostress_h=\pseudostress_h^\text{CEQ}$ is the solution of the
mixed system given in Remark \ref{remark::naive} with $q_1 = 0$ and
$q_2 = \primalph$. Further we introduce the following quantities
\begin{align*}
  \widetilde \eta_{\bold f}(\pseudostress_h^\text{CEQ}) &:=(\nu\pi)^{-1} \|h_T (\bold f + \operatorname{div}(\pseudostress_h^\text{CEQ}))\|,\\
  \widetilde \eta_{\sigma}(\pseudostress_h^\text{CEQ}) &:= \nu^{-1}\|\pseudostress_h^\text{CEQ} + \primalph \identity - \bar \sigma_h\|.
\end{align*}

Figure \ref{fig:exzero} shows the error convergence history for
uniform refinement and a corresponding barycentric refinement for the
Scott--Vogelius (SV) finite element method for two different choices of \(\nu\).
Since this
method shows the expected convergence order $r = 2$, we used the
spaces $\vecb{\widetilde V}_h = (\text{BDM}_{2})^2$ and $\widetilde{Q}_h =
\vecb{P}_1$ in the  mixed system given in Remark
\ref{remark::naive}. As expected, the error estimator contribution
\(\eta^\text{CEQ}_{\bold f}\) is of higher order, but in
a pressure-dominant scenario with \(\nu = 10^{-4}\) it is, even on the
finest mesh, much larger than the exact error. Also the other
contribution \(\eta^\text{CEQ}_{\sigma}\) is much larger than the
exact error and not of higher order. In fact, the efficiency index scales approximately with
\(\nu^{-1}\). Note that also any local equilibrated fluxes from the same
space $\vecb{\widetilde V}_h$ will only lead to larger bounds than the
best-approximation in this space that we computed here. 

In summary, there are
situations (when \(\nu^{-1} (p - \primalph)\) is large compared to
\(\vecb{u}\)) were the classical equilibration designs cannot be
considered efficient for the velocity error of a pressure-robust
discretisation.
Note that since the SV finite element method provides
exactly divergence-free velocity solutions, it holds
$c_0^{-1}\|\divergence( \primaluh)\| = 0$ in
Theorem~\ref{thm:classicalEQ}.

\section{Novel pressure-robust guaranteed upper bounds}
\label{sec:novel_upperbounds} \label{sec:relaxedcontrol}

\subsection{Motivation}

The main idea of our novel relaxed equilibration is motivated by the
findings of the pressure-robust mass conserving mixed stress (MCS)
method from
\cite{10.1093/imanum/drz022,Lederer:2019c,lederer2019mass}. 
In order to apply this method for an equilibration in the sense of the
Prager-Synge Theorem~\ref{thm:PragerSyngeStokes}, we need to slightly
weaken the regularity assumptions of $\sigma^\text{approx}$ and the
equilibration constraint \eqref{eqn:EQcondition_pindependent}. To this
end we define the novel function space 
\begin{align*}
  H(\curl \divergence, \Omega) := \{ \tau \in L^2(\Omega)^{d \times d}: \divergence(\tau) \in (H_0(\divergence,\Omega))^\star, \mathrm{tr}(\tau) = 0  \},
\end{align*}
where $(H_0(\divergence,\Omega))^\star$ is the dual space of
$H_0(\divergence, \Omega) :=
\{\vecb{v} \in H(\div, \Omega): \vecb{v}_n=0 \textrm{ on } \partial
\Omega \}$. This then allows to reformulate
\eqref{eqn:EQcondition_pindependent} as
\begin{align} \label{eq::relaxed_equi_cont}
  \langle \div (\sigma^\text{approx}), \vecb{v} \rangle_{H_0(\divergence,\Omega)} + (\vecb{f},\vecb{v}) = 0 \quad \text{for all } \vecb{v} \in H_0(\divergence, \Omega) \textrm{ with } \div(\vecb{v}) = 0,
\end{align}
where $\langle \cdot, \cdot \rangle_{H_0(\divergence,\Omega)}$ denotes
the duality pair on $H_0(\divergence,\Omega)$. Following the ideas of
the MCS method we continue to derive a discrete version of
\eqref{eq::relaxed_equi_cont}. For this we define for some given \(k \geq 0\),
the discrete stress and velocity space by
\begin{align*}
 \Sigma_h
 &:= \bigl\lbrace \tau_h \in P_k(\mathcal{T})^{d \times d} : \mathrm{tr}(\tau_h) = 0; \forall F \in \mathcal{F}(\Omega),
 \jump{(\tau_h)_{nt}} = 0  \bigr\rbrace,\\
 \vecb{V}_h &:= \mathrm{RT}_{k}(\mathcal{T}) \cap H_0(\div, \Omega).
\end{align*}
Although $\vecb{V}_h \subset H_0(\div, \Omega)$, the space
$\Sigma_h$ is slightly non-conforming with respect to the
space $H(\curl \div, \Omega)$, see \cite{lederer2019mass} for details. 
To mimic the continuous duality pair $\langle \cdot, \cdot
\rangle_{H_0(\divergence,\Omega)}$ we define for all functions
$\sigma_h \in \Sigma_h$ and $\vecb{v}_h \in \vecb{V}_h$
the bilinear form  
\begin{align*}
  \langle \divergence(\sigma_h), \vecb{v}_h \rangle_{\vecb{V}_h} & := \sum_{T \in \mathcal{T}} ( \divergence(\sigma_h), \vecb{v}_h)_T
  - \sum_{F \in \mathcal{F}} ( \jump{(\sigma_h)_{nn}}, (\vecb{v}_h)_n)_F\\
  & = - \sum_{T \in \mathcal{T}} ( \sigma_h, \nabla \vecb{v}_h)_T
  + \sum_{F \in \mathcal{F}} ( (\sigma_h)_{nt} , \jump{(\vecb{v}_h)_t})_F,
\end{align*}
which can be interpreted as a distributional divergence. 

This then leads to a discrete version
of the relaxed equilibration \eqref{eq::relaxed_equi_cont} given by
\begin{align*}
  (\vecb{f},\vecb{v}_h) + \langle \divergence(\sigma_h), \vecb{v}_h \rangle_{\vecb{V}_h} = 0 \quad \text{for all } \vecb{v}_h \in \vecb{V}_h \textrm{ with } \div(\vecb{v}_h) = 0.
\end{align*}
Before we derive guaranteed upper bounds for \(\sigma_h \in \Sigma_h\)
with this constraint in Theorem~\ref{thm:EQestimator} below, some
additional tools are introduced.

\subsection{Commuting interpolation operators}

In the following we employ several commuting interpolators whose
properties are collected here. For this note that the operator
\(\Curl\) has a different definition in two and three dimensions and
further depends on the dimension of the quantity it is applied to. If
applied to some scalar-valued quantity \(\psi \in H^1(\Omega)\) it is
defined by \(\curl (\psi) := (\partial_{y} \psi, - \partial_{x}
\psi)^T\). If applied to some vector-valued quantity \(\vecb{\psi} =
(\psi_1,\psi_2) \in \vecb{H}^1(\Omega)\) for \(d=2\) it reads as
\(\curl (\vecb{\psi}) := \partial_{x} \psi_2 - \partial_{y} \psi_1\),
and if applied to some vector-valued quantity \(\vecb{\psi} \in
H(\curl,\Omega)\) for \(d=3\) it reads as \(\curl (\vecb{\psi}) :=
\nabla \times \vecb{\psi}\). Now let \(I_{\vecb{V}_h}\) be the
standard Raviart-Thomas interpolation operator. Since the de-Rham
complex (i.e. the commuting properties we aim to discuss) involves
different spaces depending on the spatial dimension, we define 
\begin{align*}
  {W}_h := \begin{cases}
    P_{k+1} \cap H^1(\Omega) & \textrm{for } d = 2 \quad (\textrm{Lagrange space of order } k+1),\\
    \mathcal{N}_{k} & \textrm{for } d = 3 \quad (\textrm{N\'ed\'elec space of order } k).
    \end{cases}
\end{align*}
Then, for $d =3$, the interpolation $ I_{W_h}$ is the standard
N\'ed\'elec interpolation operator as in \cite{Boffi2008Mixed}, and
for $d =2$ we use the (corresponding commuting) $H^1$-interpolation
operator as given in \cite{MonkBook}. 

\begin{theorem}[Commuting interpolations]\label{thm:commuting_interpolation}   
  Let \(T \in \mathcal{T}\) be an arbitrary simplex and let $F \in
\mathcal{F}$ be an arbitrary face. The operators \( I_{W_h}\) and
\(I_{\vecb{V}_h}\) enjoy the properties:
  \begin{itemize}
  \item For $d = 2$ we have the commuting property 
     \begin{align}\label{eqn:commuting_property_two}
    I_{\vecb{V}_h} \curl (\psi) = \curl ( I_{W_h} \psi)
    \quad \text{for all } \psi \in H^2(\Omega),
  \end{align}
  and the approximation properties
      \begin{align}\label{eqn:commuting_orthogonality_faces_two}    
    ( (\id -  I_{W_h}) \psi , q_h )_F &= 0
    \quad \text{for all } q_h \in P_{k-1}(F),\\
    \label{eqn:commuting_orthogonality_cells_two}    
    ( (\id -  I_{W_h}) \psi , q_h)_T &= 0
    \quad \text{for all } q_h \in P_{k-2}(T), \\
        \| \psi -  I_{W_h} \psi \|_{T} & \leq c_2 h_T \| \nabla \psi \|_{T} \quad \textrm{for all } \psi \in H^2(T).
      \end{align}
    \item For $d = 3$ we have the commuting property 
      \begin{align}\label{eqn:commuting_property_three}    
    I_{\vecb{V}_h} \curl \vecb{\psi} = \curl ( I_{W_h}\vecb{\psi})
    \quad \text{for all } \vecb{\psi} \in \vecb{H}^1(\curl,\Omega),
  \end{align}
  where $\vecb{H}^1(\curl,\Omega) = \{ \vecb{\psi} \in
  \vecb{H}^1(\Omega): \curl(\vecb{\psi}) \in \vecb{H}^1(\Omega)\}$,
  and the approximation properties
      \begin{align}\label{eqn:commuting_orthogonality_faces_three}    
    ( (\id -  I_{W_h}) \vecb{\psi}, \vecb{q}_h \times \vecb{n})_F &= 0
    \quad \text{for all } \vecb{q}_h \in \vecb{P}_{k-1}(F),\\
    \label{eqn:commuting_orthogonality_cells_three}    
    ( (\id -  I_{W_h}) \vecb{\psi},  \vecb{q}_h )_T &= 0
    \quad \text{for all } \vecb{q}_h \in \vecb{P}_{k-2}(T),\\
         \|\vecb{\psi} -  I_{W_h} \vecb{\psi} \|_{T} & \leq c_2 h_T \| \nabla\vecb{\psi} \|_{T} \quad \textrm{for all }\vecb{\psi} \in \vecb{H}^1(\curl,T).
      \end{align}
    \item For $d = 2$ and $d = 3$ we have
      \begin{align}
    (((\id -  I_{\vecb{V}_h}) \vecb{v})_n,  \; {q}_h )_F &= 0
    \quad \text{for all } {q}_h \in {P}_{k}(F), \label{eqn:commuting_orthogonality_normalortho}\\
  ( (\id-I_{\vecb{V}_h})\vecb{v}, \vecb{q}_h )_T & = 0
  \quad \text{for all } \vecb{q}_h \in \mathcal{N}_{k-2}(T), \label{eqn:commuting_orthogonality_nedelec}\\
        \| \vecb{v} - I_{\vecb{V}_h} \vecb{v} \|_{T} & \leq c_1 h_T \| \nabla \vecb{v} \|_{T} \quad \textrm{for all } \vecb{v} \in \vecb{H}^1(T),
      \end{align}
      \end{itemize}
  with constants \(c_1,c_2\) independent of \(h_T\). 
\end{theorem}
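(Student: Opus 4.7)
The plan is to verify each claim by reducing it to the canonical moment degrees of freedom (DoFs) that define $I_{W_h}$ and $I_{\vecb{V}_h}$; every assertion is a packaging of standard de Rham / finite element exterior calculus facts for the pair $(W_h,\vecb{V}_h)$ documented in \cite{MonkBook,Boffi2008Mixed}. First I would recall the DoFs explicitly: for $I_{\vecb{V}_h}$ on $\mathrm{RT}_k$ the DoFs are the normal-trace moments against $P_k(F)$ on each face $F$ together with internal moments against $\mathcal{N}_{k-2}(T)$ on each simplex; for $I_{W_h}$ in two dimensions the DoFs are vertex evaluations, edge moments against $P_{k-1}(F)$ and cell moments against $P_{k-2}(T)$; and in three dimensions the DoFs are tangential edge moments, face moments of the form $(\cdot,\vecb{q}_h\times\vecb{n})_F$ with $\vecb{q}_h\in\vecb{P}_{k-1}(F)$, and volume moments against $\vecb{P}_{k-2}(T)$.

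The orthogonality statements \eqref{eqn:commuting_orthogonality_faces_two}, \eqref{eqn:commuting_orthogonality_cells_two}, \eqref{eqn:commuting_orthogonality_faces_three}, \eqref{eqn:commuting_orthogonality_cells_three}, \eqref{eqn:commuting_orthogonality_normalortho} and \eqref{eqn:commuting_orthogonality_nedelec} are then tautological: by construction of the moment-based interpolants, $\id - I_\bullet$ integrates to zero against precisely the test spaces appearing on the right-hand sides, so nothing beyond unpacking the definitions is required.

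For the commuting property \eqref{eqn:commuting_property_two} I would show that $I_{\vecb{V}_h}\curl\psi$ and $\curl I_{W_h}\psi$ agree in every $\mathrm{RT}_k$-DoF. On an edge $F$ with unit tangent $\vecb{t}$, the identity $(\curl\psi)\cdot\vecb{n}=\partial_{\vecb{t}}\psi$ together with an integration by parts along $F$ expresses the normal moment $((\curl\psi)\cdot\vecb{n},q_h)_F$ in terms of the vertex values of $\psi$ and an edge moment of $\psi$ against a $P_{k-1}(F)$ test function; both quantities are reproduced exactly by $I_{W_h}$. The interior DoFs are handled by an elementwise integration by parts converting $(\curl\psi,\vecb{q}_h)_T$ for $\vecb{q}_h\in\mathcal{N}_{k-2}(T)$ into face terms plus a volume moment of $\psi$ against $\operatorname{curl}(\vecb{q}_h)\in P_{k-3}(T)$, which again matches. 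The 3D version \eqref{eqn:commuting_property_three} follows the same template on edges, faces and volumes, exploiting $\curl(\mathcal{N}_k)\subset\mathrm{RT}_k$ and the orientation convention $\vecb{q}_h\times\vecb{n}$ built into the face DoFs of $\mathcal{N}_k$.

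Finally, the approximation bounds are the usual Bramble--Hilbert / scaling argument on the reference simplex $\widehat T$: both interpolants are continuous on the relevant spaces ($H^2(\widehat T)$, $\vecb{H}^1(\curl,\widehat T)$ and $\vecb{H}^1(\widehat T)$) and preserve (vector-valued) constants, so the Deny--Lions lemma yields $\|\widehat\psi-\widehat I\widehat\psi\|_{\widehat T}\lesssim\|\nabla\widehat\psi\|_{\widehat T}$; pulling back via the affine map to $T$ and tracking the Jacobian factors produces the claimed $h_T$ with constants depending only on shape regularity. The main obstacle is the bookkeeping in the commutation step, particularly in three dimensions, where the orientation of $\vecb{n}$ and the appearance of the cross product $\vecb{q}_h\times\vecb{n}$ in \eqref{eqn:commuting_orthogonality_faces_three} must be matched consistently; once the orientations are fixed, each DoF identity reduces to a short integration by parts already present in the treatments of \cite{MonkBook} and \cite{Boffi2008Mixed}.
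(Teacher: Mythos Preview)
Your proposal is correct and largely overlaps with the paper, which likewise refers the commuting identities and the $I_{W_h}$ moment properties to \cite{MonkBook,Boffi2008Mixed} and invokes a Bramble--Hilbert/scaling argument for the approximation bounds. One small slip: the standard interior degrees of freedom for $\mathrm{RT}_k$ are moments against $\vecb{P}_{k-1}(T)$, not $\mathcal{N}_{k-2}(T)$ (the dimensions do not match); your conclusion for \eqref{eqn:commuting_orthogonality_nedelec} is unaffected because $\mathcal{N}_{k-2}(T)\subset\vecb{P}_{k-1}(T)$, but you should state the actual DoF space.

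The one substantive difference from the paper is precisely in the handling of \eqref{eqn:commuting_orthogonality_nedelec}. You obtain it directly from the interior Raviart--Thomas moments, which is short and yields the claim for every $\vecb{v}\in\vecb{H}^1(T)$. The paper instead \emph{derives} it from the already established commuting property: it writes a divergence-free $\vecb{v}$ as $\curl\vecb{\psi}$, uses \eqref{eqn:commuting_property_three} to rewrite $(\id-I_{\vecb{V}_h})\vecb{v}=\curl\bigl((\id-I_{W_h})\vecb{\psi}\bigr)$, integrates by parts against $\vecb{q}_h\in\mathcal{N}_{k-2}(T)$, and then invokes \eqref{eqn:commuting_orthogonality_faces_three}--\eqref{eqn:commuting_orthogonality_cells_three} together with $\curl(\vecb{q}_h)\in\vecb{P}_{k-2}(T)$ and $(\vecb{q}_h\times\vecb{n})|_F\in\vecb{P}_{k-1}(F)$. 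Your route is more elementary and proves the statement in the generality in which it is written; the paper's route, while formally only covering divergence-free $\vecb{v}$ (which is all that is used downstream), has the conceptual payoff of exhibiting \eqref{eqn:commuting_orthogonality_nedelec} as a consequence of the commuting diagram rather than as an independent DoF fact.
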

\begin{proof}
  The properties of $I_{W_h}$ in two and three dimensions follows with
  the results in \cite{MonkBook} and standard Bramble-Hilbert type
  arguments. Note that the results in \cite{MonkBook} (in two
  dimensions) are only given for the rotated commuting diagram, i.e.
  $\nabla I_{W_h} \psi = I_{\mathcal{N}_k}(\nabla \psi)$, where $
  I_{\mathcal{N}_k}$ is the standard N\'ed\'elec interpolator.
  The claimed results in this work follow immediately as the
  Raviart--Thomas space is simply a rotated
  N\'ed\'elec space and the curl is the rotated gradient, thus we have
  $(I_{\mathcal{N}_k}(\nabla \psi))^\perp = I_{\vecb{V}_h}(\Curl
  \psi)$.
Similar results can be found in \cite{DEMKOWICZ200029,Melenk2019,Boffi2008Mixed}.  
  
  Equation \eqref{eqn:commuting_orthogonality_normalortho} is a
  standard property of the Raviart-Thomas interpolation operator, see
  e.g. \cite{Boffi2008Mixed}. We continue with the proof of
  \eqref{eqn:commuting_orthogonality_nedelec} but only present the
  case $d=3$ since the two dimensional results follows with similar
  arguments. First observe that any divergence-free function
  \(\vecb{v} \in \vecb{V}_0\) has a (local) potential \(\vecb{v} =
  \curl (\vecb{\psi})\) for some \(\vecb{\psi} \in H^1(\curl,T)\).
  Then, for any \(\vecb{q}_h \in \mathcal{N}_{k-2}(T)\),
  \eqref{eqn:commuting_property_three} and an integration by parts
  show
  \begin{align*}
  ( (\id-I_{\vecb{V}_h})\vecb{v} ,\vecb{q}_h )_T
  & = ( (\id-I_{\vecb{V}_h})\curl (\vecb{\psi}) , \vecb{q}_h )_T
   = ( \curl ((\id-I_{W_h} ) \vecb{\psi}) , \vecb{q}_h )_T\\
  & = ( (\id-I_{W_h}) \vecb{\psi} ,\curl (\vecb{q}_h ))_T
    - ((\id-I_{W_h}) \vecb{\psi} ,\vecb{q}_h \times \vecb{n} )_F.
  \end{align*}
  Since \(\vecb{q}_h \in \mathcal{N}_{k-2}(T) \subset
  \mathbb{\mathcal{P}}_{k-1}(T)\) and hence \(\curl(\vecb{q}_h) \in
  P_{k-2}(T)\) and \(\vecb{q}_h \times \vecb{n}|_F \in P_{k-1}(F)\),
  the right-hand side vanishes due to
  \eqref{eqn:commuting_orthogonality_cells_three} and
  \eqref{eqn:commuting_orthogonality_faces_three}. This concludes the
  proof. 
\end{proof}

\subsection{Novel pressure-robust guaranteed upper bounds}

We are now in the position to derive pressure-robust guaranteed upper
bounds via equilibrated fluxes with a proper discrete analogue of the
equilibration constraint \eqref{eqn:EQcondition_pindependent}. 

\begin{theorem}\label{thm:EQestimator} Assume the regularity
\(\vecb{f} \in H(\curl,\Omega)\). Let $\primaluh,\primalph$ be the
solution of \eqref{primalform} and let \(\bar \sigma_h := \nu \nabla
\primaluh\). For any discrete stress \(\sigma_h \in \Sigma_h\)
that is equilibrated in the sense
\begin{align} \label{eq::relaxed_equi_disc} 
  (\vecb{f},\vecb{v}_h) + \langle \divergence(\sigma_h), \vecb{v}_h \rangle_{\vecb{V}_h} = 0 \quad \text{for all } \vecb{v}_h \in \vecb{V}_h \textrm{ with } \div(\vecb{v}_h) = 0,
\end{align}
it holds $\| \nabla(\vecb{u} - \bvecb{u} _h) \| \leq \eta(\sigma_h)$
with the error estimator 
\begin{align*}
  \eta(\sigma_h)^2 := \nu^{-2} \sum_{T \in \mathcal{T}} \Big(c_1 c_2 h_T^2  \| (\id - \vecb{\pi}_{k-2}) \curl (\vecb{f}) \|_T
    + &\| \mathrm{dev}(\sigma_h - \bar \sigma_h) \|_{T} \Big)^2 \\
    &+  c_0^{-2}  \| \divergence (\primaluh) \|^2.
\end{align*}
\end{theorem}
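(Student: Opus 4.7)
The plan is to start from the standard residual split already used in the proof of Theorem~\ref{thm:classicalEQ}, namely
\[
  \|\nabla(\vecb{u} - \primaluh)\|^2 \le \nu^{-2}\|\vecb{r}\|_{\vecb{V}_0^\star}^2 + c_0^{-2}\|\divergence(\primaluh)\|^2,
\]
with residual \(\vecb{r}(\vecb{v}) = (\vecb{f},\vecb{v}) - (\bar\sigma_h, \nabla\vecb{v})\) defined on \(\vecb{v} \in \vecb{V}_0\); the divergence contribution already matches the last summand of \(\eta(\sigma_h)^2\). For any \(\vecb{v} \in \vecb{V}_0\) with \(\|\nabla\vecb{v}\| = 1\) I would take a Helmholtz potential \(\vecb{v} = \curl\psi\) in 2D (and \(\vecb{v} = \curl\vecb{\psi}\) with \(\vecb{\psi}\in\vecb{H}^1(\curl,\Omega)\) in 3D) with homogeneous boundary values on a simply connected \(\Omega\), and set \(\vecb{v}_h := I_{\vecb{V}_h}\vecb{v}\). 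The commuting identities \eqref{eqn:commuting_property_two}/\eqref{eqn:commuting_property_three} give \(\vecb{v}_h = \curl(I_{W_h}\psi) \in \vecb{V}_h\) with \(\divergence(\vecb{v}_h) = 0\), so \(\vecb{v}_h\) is admissible in the relaxed equilibration \eqref{eq::relaxed_equi_disc}.

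Next, I would add and subtract \((\vecb{f},\vecb{v}_h)\) and \((\sigma_h,\nabla\vecb{v})\) in \(\vecb{r}(\vecb{v})\), use \((\vecb{f},\vecb{v}_h) = -\langle\divergence(\sigma_h),\vecb{v}_h\rangle_{\vecb{V}_h}\), and collect terms with elementwise integration by parts on \(\sigma_h\) against \(\vecb{v}_h - \vecb{v}\). The only boundary face term that survives is \(\sum_F(\jump{(\sigma_h)_{nn}},(\vecb{v}-\vecb{v}_h)_n)_F\), and this vanishes because \(\jump{(\sigma_h)_{nn}} \in P_k(F)\) while \(((\vecb{v}-I_{\vecb{V}_h}\vecb{v}))_n\) is \(L^2\)-orthogonal to \(P_k(F)\) by \eqref{eqn:commuting_orthogonality_normalortho}. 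Using \(\mathrm{tr}(\sigma_h) = 0\) and \(\divergence(\vecb{v}) = 0\) to replace \(\sigma_h - \bar\sigma_h\) by its deviator when tested against \(\nabla\vecb{v}\), I obtain the clean identity
\[
  \vecb{r}(\vecb{v}) = (\mathrm{dev}(\sigma_h - \bar\sigma_h),\nabla\vecb{v}) + \sum_{T\in\mathcal{T}}(\vecb{f} + \divergence(\sigma_h),\vecb{v}-\vecb{v}_h)_T,
\]
whose first term already yields the \(\|\mathrm{dev}(\sigma_h - \bar\sigma_h)\|_T\) contribution of the estimator after Cauchy--Schwarz.

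The hard part will be bounding the remaining sum elementwise by \(c_1 c_2 h_T^2 \|(\id - \vecb{\pi}_{k-2})\curl(\vecb{f})\|_T\|\nabla\vecb{v}\|_T\). I would exploit \(\vecb{v} - \vecb{v}_h = \curl\phi\) with \(\phi := \psi - I_{W_h}\psi\) (and its vector analogue in 3D) and integrate by parts once more on each element to write, in 2D,
\[
  (\vecb{f} + \divergence(\sigma_h), \curl\phi)_T = (\curl(\vecb{f} + \divergence(\sigma_h)), \phi)_T - \int_{\partial T}\phi\,(\vecb{f} + \divergence(\sigma_h))\cdot\vecb{t}\,\mathrm{d}s.
\]
Two cancellations are decisive. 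First, \(\curl(\divergence(\sigma_h))|_T \in P_{k-2}(T)\) (respectively \(\vecb{P}_{k-2}(T)\) in 3D) because \(\sigma_h|_T\) is polynomial of degree \(k\), so the cell orthogonality \eqref{eqn:commuting_orthogonality_cells_two}/\eqref{eqn:commuting_orthogonality_cells_three} annihilates it. Second, on interior faces \(\jump{\vecb{f}\cdot\vecb{t}} = 0\) since \(\vecb{f}\in H(\curl,\Omega)\), while \(\jump{\divergence(\sigma_h)\cdot\vecb{t}}|_F \in P_{k-1}(F)\) is killed by the face orthogonality \eqref{eqn:commuting_orthogonality_faces_two}/\eqref{eqn:commuting_orthogonality_faces_three}; boundary faces contribute nothing because \(\phi = 0\) there. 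What remains is \(\sum_T ((\id-\vecb{\pi}_{k-2})\curl(\vecb{f}),\phi)_T\) after one more use of cell orthogonality.

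For the concluding elementwise estimate \(\|\phi\|_T \le c_1 c_2 h_T^2 \|\nabla\vecb{v}\|_T\), I plan to chain the two approximation bounds of Theorem~\ref{thm:commuting_interpolation}: since \(I_{W_h}\) reproduces linears, I replace \(\psi\) by \(\psi - q\) for a suitable \(q \in P_1\) and apply \(\|\psi - I_{W_h}\psi\|_T \le c_2 h_T \|\nabla(\psi - q)\|_T\); optimising over \(q\) and invoking a standard Poincaré step on \(\nabla\psi - \nabla q\) adds a factor \(c_1 h_T \|\nabla^2\psi\|_T\), and \(\|\nabla^2\psi\|_T = \|\nabla\vecb{v}\|_T\) by the rotation relating \(\curl\) and \(\nabla\) (analogously for \(\vecb{\psi}\) in 3D). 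An elementwise Cauchy--Schwarz followed by a global one then combines the two estimator pieces into
\[
  \|\vecb{r}\|_{\vecb{V}_0^\star} \le \Bigl(\sum_{T\in\mathcal{T}}\bigl(c_1 c_2 h_T^2 \|(\id-\vecb{\pi}_{k-2})\curl(\vecb{f})\|_T + \|\mathrm{dev}(\sigma_h - \bar\sigma_h)\|_T\bigr)^2\Bigr)^{1/2},
\]
which inserted into the initial error split delivers \(\eta(\sigma_h)\).
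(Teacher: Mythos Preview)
Your overall strategy---the error split, inserting $I_{\vecb{V}_h}\vecb{v}$ via the equilibration constraint, killing the $\jump{(\sigma_h)_{nn}}$ face term with \eqref{eqn:commuting_orthogonality_normalortho}, passing to the deviator, and one more elementwise integration by parts combined with the cell and face orthogonalities of $I_{W_h}$---coincides with the paper's proof. The substantive difference lies in how you manufacture the factor $c_1 c_2 h_T^2$, and there your argument has a gap.

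You take $\psi$ (respectively $\vecb{\psi}$) to be a potential for $\vecb{v}$ itself and set $\phi := \psi - I_{W_h}\psi$; then you invoke linear invariance of $I_{W_h}$ plus a Bramble--Hilbert step to reach $\|\phi\|_T \lesssim h_T^2 \|\nabla^2\psi\|_T$, and finally claim $\|\nabla^2\psi\|_T = \|\nabla\vecb{v}\|_T$. In two dimensions this last identity is correct (the scalar $\curl$ is the rotated gradient), though the constant you obtain is $c_2$ times a Poincar\'e/Bramble--Hilbert constant on $T$, not the Raviart--Thomas constant $c_1$ of Theorem~\ref{thm:commuting_interpolation}. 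In three dimensions the identity is simply false: from $\vecb{v} = \curl\vecb{\psi}$ you only control $\|\nabla\curl\vecb{\psi}\|_T$, which does not dominate the full Hessian $\|\nabla^2\vecb{\psi}\|_T$ on a single element (even for divergence-free $\vecb{\psi}$). Your concluding chain therefore does not deliver the $h_T^2$ bound in the three-dimensional case.

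The paper avoids both issues by constructing the potential differently: it writes $\vecb{v} - I_{\vecb{V}_h}\vecb{v} = \curl\vecb{\psi}$ with $\vecb{\psi}$ taken via a regularized right inverse of $\curl$ (citing \cite{CostaMcInt10}) so that already $\|\nabla\vecb{\psi}\|_T \le \|\vecb{v} - I_{\vecb{V}_h}\vecb{v}\|_T$ holds elementwise. Since $I_{\vecb{V}_h}\curl\vecb{\psi} = 0$, the commuting property gives $\curl(I_{W_h}\vecb{\psi}) = 0$, so the volume term can still be rewritten with $\vecb{\psi} - I_{W_h}\vecb{\psi}$. Then a single application of the $I_{W_h}$-estimate followed by a single application of the $I_{\vecb{V}_h}$-estimate yields
\[
  \|\vecb{\psi} - I_{W_h}\vecb{\psi}\|_T \le c_2 h_T \|\nabla\vecb{\psi}\|_T \le c_2 h_T \|\vecb{v} - I_{\vecb{V}_h}\vecb{v}\|_T \le c_1 c_2 h_T^2 \|\nabla\vecb{v}\|_T,
\]
with exactly the stated constants, no second-derivative control on the potential, and no need for your global stream-function construction.
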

\begin{proof}

As in Theorem~\ref{thm:classicalEQ} the point of departure is the error split
\begin{align*}
\| \nabla(\vecb{u} - \bvecb{u} _h) \|^2 \leq \nu^{-2} \| \vecb{r} \|_{\vecb{V}_0^\star}^2 +  c_0^{-2}  \| \divergence (\primaluh) \|^2
\end{align*}
where it remains to bound the residual functional
\begin{align*}
  \vecb{r}(\vecb{v}) = ( \vecb{f},\vecb{v}) - ( \bar \sigma_h, \nabla \vecb{v}) \quad \text{for all } \vecb{v} \in \vecb{V}_0,
\end{align*}
in its dual norm
\begin{align*}
  \| \vecb{r} \|_{\vecb{V}_0^\star} := \sup_{\vecb{v} \in \vecb{V}_0 \setminus \lbrace \vecb{0} \rbrace}
  \frac{\vecb{r}(\vecb{v})}{\| \nabla \vecb{v} \|}.
\end{align*}

  Consider an arbitrary test function \(\vecb{v} \in \vecb{V}_0\) and
  some equilibrated flux \(\sigma_h\) with the properties stated
  above. The equilibration constraint \eqref{eq::relaxed_equi_disc} for $\vecb{v}_h = I_{\vecb{V}_h} \vecb{v}$
  and an integration by parts yields
  \begin{align*}
    \vecb{r}(\vecb{v}) =& \langle \vecb{f} + \divergence(\sigma_h), \vecb{v} - I_{\vecb{V}_h} \vecb{v} \rangle_{\vecb{V}_h} + (\sigma_h - \bar \sigma_h, \nabla \vecb{v} )\\
     =& 
   \sum_{T \in \mathcal{T}} (\vecb{f} + \divergence(\sigma_h), \vecb{v} - I_{\vecb{V}_h} \vecb{v})_T\\
    &+ \sum_{F \in \mathcal{F}(\Omega)} ( \jump{(\sigma_h)_{nn}}, (\vecb{v} - I_{\vecb{V}_h} \vecb{v})_n)_F
    + (\sigma_h - \bar \sigma_h,\nabla \vecb{v}).
  \end{align*}
  Since \(\jump{(\sigma_h)_{nn}} \in P_{k}(F)\), the second integral
 vanishes using to orthogonality properties of the normal flux of
 \((\vecb{v} - I_{\vecb{V}_h} \vecb{v})\), see
 \eqref{eqn:commuting_orthogonality_normalortho} of
 Theorem~\ref{thm:commuting_interpolation}.  
  The last integral on the right-hand side can be estimated by
\begin{align}\label{eqn:main_proof_estimate1}
  (\sigma_h - \bar \sigma_h, \nabla \vecb{v})
  = (\mathrm{dev}(\sigma_h - \bar \sigma_h), \nabla \vecb{v} )
   \leq \sum_{T \in \mathcal{T}} \| \mathrm{dev}(\sigma_h - \bar \sigma_h) \|_{T} \| \nabla \vecb{v} \|_{T}.
\end{align}
Here, \(\mathrm{dev}(A)\) denotes the deviatoric part of a \(A\) and it was used that \(A - \mathrm{dev}(A) = \mathrm{tr}(A) \identity/d\) is orthogonal on gradients of divergence-free functions.

The estimate of the first integral will be presented only for the case
$d=3$, as for $d=2$ the arguments are very similar. Since \(\vecb{v}
- I_{\vecb{V}_h} \vecb{v}\) is divergence-free, there exists some
\(\vecb{\psi} \in \vecb{H}^1(\Omega)\) with $\| \nabla \vecb{\psi}
\|_T \le \|\vecb{v} - I_{\vecb{V}_h} \vecb{v}\|_T$, see for example in
\cite{CostaMcInt10}, such that \(\vecb{v} - I_{\vecb{V}_h} \vecb{v} =
\curl \vecb{\psi}\) and by the interpolation properties we have
\begin{align}\label{eqn:curlpotential_norm_estimate}
	\| \nabla \vecb{\psi} \|_T \le \| \curl \vecb{\psi} \|_T = \|\vecb{v} - I_{\vecb{V}_h} \vecb{v}\|_T \leq c_1 h_T \| \nabla \vecb{v} \|_T \quad \text{on every } T \in \mathcal{T}. 
\end{align}
  
  
Moreover, it holds
$I_{\vecb{V}_h} \curl \vecb{\psi} = 0$ and hence, the commuting property
\eqref{eqn:commuting_property_three} in
Theorem~\ref{thm:commuting_interpolation} yields \(\curl
I_{W_h} \vecb{\psi}= 0\) where \(I_{W_h}\) is the matching commuting
interpolation operator. Note that the application of the operator
$I_{W_h}$ to $\vecb{\psi}$ is well defined, since locally on each
element $T \in \mathcal{T}$ we have that $\vecb{v} - I_{\vecb{V}_h}
\vecb{v} \in \vecb{H}^1(T)$ and thus we can bound $\| \nabla \curl
\vecb{\psi}\|_T \le \|\nabla(\vecb{v} - I_{\vecb{V}_h} \vecb{v}) \|_T$
which gives $\vecb{\psi} \in \vecb{H}^1(\curl, T)$.

Next, if \(k \geq 2\), consider some N\'ed\'elec function
\(\vecb{\theta}_h \in \mathcal{N}_{k-2}(\mathcal{T})\) chosen such
that \(\curl \vecb{\theta}_h = \vecb{\pi}_{k-2} \curl (\vecb{f} +
\divergence(\sigma_h))\). Then,
\eqref{eqn:commuting_orthogonality_nedelec} and the properties of
\(\sigma_h\) yield
\begin{align*}
    \sum_{T \in \mathcal{T}} (\vecb{f} + \divergence(\sigma_h), \vecb{v} - I_{\vecb{V}_h} \vecb{v})_T
   =& \sum_{T \in \mathcal{T}} (\vecb{f} + \divergence(\sigma_h) - \vecb{\theta}_h, \curl (\vecb{\psi} - I_{W_h} \vecb{\psi}) )_T\\
   = &\sum_{T \in \mathcal{T}}  (\id -  \vecb{\pi}_{k-2}) \curl (\vecb{f} + \divergence(\sigma_h)),  \vecb{\psi} - I_{W_h} \vecb{\psi})_T\\
   &+ \sum_{F \in \mathcal{F}(\Omega)} (\jump{\vecb{f} + \divergence(\sigma_h)} \times \vecb{n}, \vecb{\psi} - I_{W_h} \vecb{\psi})_F.
\end{align*}
Since \(\vecb{f} \in H(\curl,\Omega)\) and \(\divergence(\sigma_h) \in
\vecb{P}_{k-1}(\mathcal{T})\), the second integral vanishes due to
property \eqref{eqn:commuting_orthogonality_faces_three} of
\(I_{W_h}\) from Theorem~\ref{thm:commuting_interpolation}. For the
remaining term the equation $(\id - \vecb{\pi}_{k-2})\curl (
\divergence(\sigma_h)) = 0$, the interpolation properties of
\(I_{W_h}\) (see again Theorem~\ref{thm:commuting_interpolation}) and
\eqref{eqn:curlpotential_norm_estimate} yield
\begin{align*}
\sum_{T \in \mathcal{T}} ( (\id - \vecb{\pi}_{k-2}) &\curl (\vecb{f} + \divergence(\sigma_h)) , \vecb{\psi}- I_{W_h} \vecb{\psi})_T \\
& \leq \sum_{T \in \mathcal{T}} \| (\id -  \vecb{\pi}_{k-2}) \curl (\vecb{f}) \|_{T} \| \vecb{\psi} - I_{W_h} \vecb{\psi}\|_{T}\\
& \leq \sum_{T \in \mathcal{T}} c_2 h_T \| (\id -  \vecb{\pi}_{k-2}) \curl (\vecb{f}) \|_{T} \| \nabla \vecb{\psi} \|_{T}\\
& \leq \sum_{T \in \mathcal{T}} c_1 c_2 h_T^2 \| (\id - \vecb{\pi}_{k-2}) \curl (\vecb{f}) \|_{T} \| \nabla \vecb{v} \|_{T}.
\end{align*}


The last estimate, \eqref{eqn:main_proof_estimate1} and a Cauchy inequality show
\begin{align*}
  \vecb{r}(\vecb{v}) \leq \sum_{T \in \mathcal{T}} \left(c_1 c_2 h_T^2 \| (\id -  \vecb{\pi}_{k-2}) \curl (\vecb{f}) \|_{T}
  + \| \mathrm{dev}(\sigma_h - \bar \sigma_h) \|_{T} \right) \| \nabla \vecb{v} \|_{T}
\end{align*}
and hence
\begin{align*}
  \| \vecb{r} \|^2_{\vecb{V}_0^\star} \leq\sum_{T \in \mathcal{T}} \left(c_1 c_2 h_T^2 \| (\id -  \vecb{\pi}_{k-2}) \curl (\vecb{f}) \|_{T} + \| \mathrm{dev}(\sigma_h - \bar \sigma_h) \|_{T} \right)^2.
\end{align*}
This concludes the proof.
  \end{proof}

  \begin{remark}
    Theorem \ref{thm:EQestimator} also holds true in the case when we only have the local regularity assumption $\vecb{f} \in H(\curl,T)$ for all $T \in \mathcal{T}$. Note however, that this introduces another term on the boundary of the elements given by
    \begin{align*}
      c_3\sum\limits_{F \in \mathcal{F}(\Omega)} h_F^{3} \| (\id - \vecb{\pi}_{k-1})\jump{\vecb{f} \times \vecb{n}} \|_{F}^2
    \end{align*}
added to the estimator $\eta(\sigma_h)^2$ given in Theorem \ref{thm:EQestimator}. Here, $c_3$ is an additional constant that only depends on the shape of the simplices $T \in \mathcal{T}$.
\end{remark}
  

\begin{remark}[Divergence error]
  In the numerical examples of Section~\ref{sec:numerics}
  it becomes apparent that the efficiency of the error estimator is
  mostly limited by the divergence-term \(c_0^{-1} \| \divergence
  (\vecb{u}_h) \|\) for non-divergence-free discretisations. To avoid
  this term and possibly further increase the efficiency, one may
  consider a divergence-free post-processing \(\vecb{s}_h \in
  \vecb{H}^1(\Omega)\) of \(\vecb{u}_h\) and perform the error
  estimation for \(\vecb{s}_h\) or \(\bar \sigma_h := \nabla
  \vecb{s}_h\). Effectively this would replace the term \(c_0^{-1}  \|
  \divergence (\vecb{u}_h) \|\) by \(\|\nabla(\vecb{s}_h -
  \primaluh)\|\) without the possibly small constant \(c_0\).
  A candidate for such a post-processing may be a locally computed
  approximation into a divergence-free Scott--Vogelius finite element
  space (on a barycentrically refined subgrid) similar to
  \cite{kreuzer2020quasioptimal}.
\end{remark}

\section{Global Equilibration}
\label{sec:mixedmethod}
This section derives one possible design of an equilibrated flux
$\sigma_h = \sigma^\text{GEQ}_h$ for Theorem~\ref{thm:EQestimator}. The idea is to solve a
global problem with a mixed MCS method: Find \((\sigma^\text{GEQ}_h,\vecb{u}_h,p_h) \in \Sigma_h \times
\vecb{V}_h \times Q_h\) such that
\begin{subequations}\label{global:MCS}
\begin{align}
  \frac{1}{\nu}(\sigma^\text{GEQ}_h,\tau_h) + \langle \divergence(\tau_h), \vecb{u}_h \rangle_{\vecb{V}_h} & =  ( \nabla \bar{\vecb{u}}_h, \tau_h)
  && \text{for all } \tau_h \in \Sigma_h,\\
  \langle \divergence(\sigma^\text{GEQ}_h), \vecb{v}_h \rangle_{\vecb{V}_h} + (\div(\vecb{v}_h),p_h) &= (-\vecb{f},\vecb{v}_h)
  && \text{for all } \vecb{v}_h \in \vecb{V}_h,\\
  (\div(\vecb{u}_h),q_h) &=  - (\divergence( \bar{\vecb{u}}_h),q_h)
  && \text{for all } q_h \in Q_h.
\end{align}
\end{subequations}
The system \eqref{global:MCS} can be interpreted as an
$L^2$-minimization problem with constraints given by 
\begin{align*}
\| \sigma_h^\text{GEQ} - \nu \nabla
\bar{\vecb{u}}_h \|_{L^2(\Omega)} \rightarrow \min \quad \textrm{such that} \quad \langle
\divergence(\sigma_h), I_{\vecb{V}_h} \vecb{V}_0 \rangle_{\vecb{V}_h}
= (-\vecb{f}, I_{\vecb{V}_h}\vecb{V}_0).
\end{align*} 
Since no explicit basis for $I_{\vecb{V}_h}\vecb{V}_0$ (i.e. exactly
divergence-free Raviart-Thomas functions) can be constructed, the
divergence constraint is employed by means of the Lagrange multiplier
$p_h$. 

  \begin{theorem}\label{thm:MCSapriori} The solution
  \(\sigma_h^\text{GEQ}\) of problem \eqref{global:MCS} satisfies the
  discrete equilibration constraint \eqref{eq::relaxed_equi_disc}.
If the exact velocity solution of \ref{eq::stokes_ex} fulfills the regularity \(\vecb{u} \in
\vecb{H}^{m}(\mathcal{T}) \cap \vecb{V}_0\) with $m \geq 2$, it holds
the error estimate
  \begin{align*}
    \| \sigma - \sigma_h^\text{GEQ} \| \lesssim h^s \nu \| \vecb{u} \|_{H^{s+1}(\mathcal{T})}
  \end{align*}  
  where \(s := \min \lbrace m-1,k+1 \rbrace\) and \(\sigma = \nu \nabla \vecb{u}\).
\end{theorem}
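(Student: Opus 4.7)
The plan is to split the result into two parts. The equilibration constraint follows directly from the mixed formulation, while the a priori estimate is obtained by recasting \eqref{global:MCS} as a standard MCS system for a shifted velocity variable and invoking the analysis of \cite{10.1093/imanum/drz022,lederer2019mass}.

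For the first claim, I would test the second equation of \eqref{global:MCS} with an arbitrary $\vecb{v}_h \in \vecb{V}_h$ satisfying $\divergence(\vecb{v}_h) = 0$; the pressure term vanishes and the remaining identity is precisely \eqref{eq::relaxed_equi_disc}.

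For the a priori estimate, the key manipulation is an elementwise integration by parts. Since $\bvecb{u}_h \in \vecb{H}^1_0(\Omega)$ is continuous across element interfaces and every $\tau_h \in \Sigma_h$ has continuous normal-tangential component, the tangential boundary contributions cancel and one obtains
\begin{align*}
(\nabla \bvecb{u}_h, \tau_h) = -\langle \divergence(\tau_h), \bvecb{u}_h\rangle_{\vecb{V}_h},
\end{align*}
where the duality pairing is extended from $\vecb{V}_h$ to $\vecb{V}_h + \primalVh$ by the same defining formula. Setting $\widetilde{\vecb{u}}_h := \vecb{u}_h + \bvecb{u}_h$ then rewrites \eqref{global:MCS} in the standard MCS form
\begin{align*}
\tfrac{1}{\nu}(\sigma_h^\text{GEQ},\tau_h) + \langle \divergence(\tau_h), \widetilde{\vecb{u}}_h\rangle_{\vecb{V}_h} &= 0, \\
\langle \divergence(\sigma_h^\text{GEQ}), \vecb{v}_h\rangle_{\vecb{V}_h} + (\divergence(\vecb{v}_h), p_h) &= -(\vecb{f}, \vecb{v}_h), \\
(\divergence(\widetilde{\vecb{u}}_h), q_h) &= 0,
\end{align*}
with $\widetilde{\vecb{u}}_h$ living in the affine space $\vecb{V}_h + \bvecb{u}_h$. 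The continuous triple $(\sigma, \vecb{u}, p)$ satisfies these same equations with $\widetilde{\vecb{u}}_h$ replaced by $\vecb{u}$ (using $(\nabla \vecb{u}, \tau_h) = -\langle \divergence(\tau_h), \vecb{u}\rangle_{\vecb{V}_h}$ for $\vecb{u} \in \vecb{V}_0$ and the smoothness of $\sigma$), which yields the Galerkin orthogonality
\begin{align*}
\tfrac{1}{\nu}(\sigma - \sigma_h^\text{GEQ},\tau_h) + \langle \divergence(\tau_h), \vecb{u} - \widetilde{\vecb{u}}_h\rangle_{\vecb{V}_h} = 0.
\end{align*}

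From here the argument follows the standard Babuska--Brezzi template. The inf-sup stability of the triple $(\Sigma_h, \vecb{V}_h, Q_h)$ established in \cite{10.1093/imanum/drz022,lederer2019mass} carries over verbatim since only the data have changed, and a C\'ea-type estimate using the commuting interpolants $I_{\vecb{V}_h}$, $I_{W_h}$ of Theorem~\ref{thm:commuting_interpolation} together with Bramble--Hilbert bounds on $P_k(\mathcal{T})^{d\times d} \subset \Sigma_h$ delivers the claimed rate $h^s \nu \|\vecb{u}\|_{H^{s+1}(\mathcal{T})}$ with $s = \min\{m-1, k+1\}$. The main technical obstacle is that $\widetilde{\vecb{u}}_h$ lives in the affine set $\vecb{V}_h + \bvecb{u}_h$ rather than in $\vecb{V}_h$, so constructing a comparable discrete interpolant of $\vecb{u}$ requires splitting the velocity approximation error; the $\bvecb{u}_h$-dependent contribution is absorbed via the pressure-robust primal a priori estimate and enters only as a lower-order correction to the $h^{k+1}$ MCS rate.
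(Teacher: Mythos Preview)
Your argument for the equilibration constraint is fine and matches the paper. The difference lies in the error estimate, where you shift by $\bvecb{u}_h$ itself and end up with $\widetilde{\vecb{u}}_h = \vecb{u}_h + \bvecb{u}_h$ in the affine set $\vecb{V}_h + \bvecb{u}_h$. You correctly identify this as an obstacle, but your proposed fix---absorbing the mismatch via the primal a priori estimate as a ``lower-order correction''---is not justified: in the intended application $k=r$, so the primal rate $h^{r}$ is \emph{not} lower order than the MCS rate $h^{k+1}$, and more importantly the MCS stability and C\'ea arguments from \cite{10.1093/imanum/drz022,lederer2019mass} are formulated for trial functions in $\Sigma_h\times\vecb{V}_h\times Q_h$, so they do not apply directly once the velocity variable leaves $\vecb{V}_h$.

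The paper avoids this obstacle entirely with one extra observation: instead of shifting by $\bvecb{u}_h$, shift by its Raviart--Thomas interpolant $I_{\vecb{V}_h}\bvecb{u}_h$. Because $\divergence(\tau_h)\in P_{k-1}(T)^d$ and $\jump{(\tau_h)_{nn}}\in P_k(F)$, the moment properties of $I_{\vecb{V}_h}$ together with the $H^1$-continuity of $\bvecb{u}_h$ give
\[
\langle \divergence(\tau_h), I_{\vecb{V}_h}\bvecb{u}_h\rangle_{\vecb{V}_h}
= \langle \divergence(\tau_h), \bvecb{u}_h\rangle_{\vecb{V}_h}
= -(\nabla\bvecb{u}_h,\tau_h),
\]
and similarly $(\divergence(I_{\vecb{V}_h}\bvecb{u}_h),q_h)=(\divergence(\bvecb{u}_h),q_h)$. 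Hence $\vecb{w}_h := \vecb{u}_h + I_{\vecb{V}_h}\bvecb{u}_h$ lies genuinely in $\vecb{V}_h$, is exactly divergence-free, and $(\sigma_h^\text{GEQ},\vecb{w}_h,p_h)$ solves the \emph{standard} MCS Stokes system with homogeneous first and third right-hand sides. The pressure-robust a priori estimates of \cite{lederer2019mass,Lederer:2019c} then apply verbatim, with no dependence on $\bvecb{u}_h$ or the primal error at all. This is the missing idea in your argument.
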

\begin{proof}
  The equilibration constraint follows from the second equation of the discrete system \eqref{global:MCS}, since  given any \(\vecb{v} \in \vecb{V}_0\),
testing with the divergence-free function \(\vecb{v}_h := I_{\vecb{V}_h} \vecb{v}\) leads to
\(
  \langle \divergence(\sigma_h^\text{GEQ}), \vecb{v}_h \rangle_{\vecb{V}_h} = (-\vecb{f}, \vecb{v}_h)
\).

We continue with the error estimate by
showing that the solution of the best-approximation problem
\eqref{global:MCS} is related to solving a MCS-Stokes problem with a
zero right-hand in the first and third equation. To this end let
\(\vecb{w}_h =  \vecb{u}_h +  I_{\vecb{V}_h}
\bar{\vecb{u}}_h\). Since $\divergence(\tau_h) \in P_{k-1}(T)^2$ for
all $T \in \mathcal{T}$ and $\jump{(\tau_h)_{nn}} \in P_{k}(F)$ for
all $F \in \mathcal{F}$, the properties of the Raviart-Thomas
interpolator, integration by parts and the $H^1$-continuity of
$\bar{\vecb{u}}_h$  give
\begin{align*}
 \langle \divergence(\tau_h), I_{\vecb{V}_h} \bar{\vecb{u}}_h \rangle_{\vecb{V}_h} &=   \sum_{T \in \mathcal{T}} ( \divergence(\tau_h), I_{\vecb{V}_h} \bar{\vecb{u}}_h)_T
  - \sum_{F \in \mathcal{F}} (\jump{(\tau_h)_{nn}}, (I_{\vecb{V}_h} \bar{\vecb{u}}_h)_{n})_F \\
  &=   \sum_{T \in \mathcal{T}} (\divergence(\tau_h),  \bar{\vecb{u}}_h )_T
  - \sum_{F \in \mathcal{F}} ( \jump{(\tau_h)_{nn}} , (\bar{\vecb{u}}_h)_n )_F \\
  &=   - \sum_{T \in \mathcal{T}} ( \tau_h,  \nabla \bar{\vecb{u}}_h )_T
  + \sum_{F \in \mathcal{F}} ( (\tau_h)_{nt} ,\jump{  (\bar{\vecb{u}}_h)_t})_F \\
  &= -(\nabla \bar{\vecb{u}}_h, \tau_h).
\end{align*}
Further we have
$(\div(I_{\vecb{V}_h} \bar{\vecb{u}}_h), q_h) =
(\div(\bar{\vecb{u}}_h), q_h)$ for all $q_h \in Q_h$. This shows that
the triplet \((\sigma^\text{GEQ}_h,{\vecb{w}}_h,p_h) \in \Sigma_h
\times \vecb{V}_h \times Q_h\) solves the problem
\begin{align*}
  \frac{1}{\nu}(\sigma^\text{GEQ}_h,\tau_h) + \langle \divergence(\tau_h),{\vecb{w}}_h \rangle_{\vecb{V}_h} & = 0 
  && \text{for all } \tau_h \in \Sigma_h,\\
  \langle \divergence(\sigma^\text{GEQ}_h), \vecb{v}_h \rangle_{\vecb{V}_h} + (\divergence(\vecb{v}_h),p_h) &= (-\vecb{f},\vecb{v}_h)
  && \text{for all } \vecb{v}_h \in \vecb{V}_h,\\
  (\divergence({\vecb{w}}_h),q_h)  &=   0 
  && \text{for all } q_h \in Q_h.
\end{align*}
Since ${\vecb{w}}_h \in \vecb{V}_h$ is exactly divergence-free, the
pressure-robust error estimates of the standard Stokes problem
(discretised by the MCS method) from
\cite{lederer2019mass,Lederer:2019c} give 
\begin{align*}
 \| \sigma - \sigma^\text{GEQ}_h \|\lesssim h^s \nu \| \vecb{u} \|_{H^{s+1}}.
\end{align*}
This concludes the proof.
\end{proof}

The following theorem proves global efficiency of the global design. 

\begin{theorem}[Global efficiency of the global design] \label{th::globaleff}
  The error estimator for \(\sigma_h := \sigma_h^\text{GEQ}\) from
  \eqref{global:MCS}, is efficient in the sense that
  \begin{align*}
    \eta(\sigma_h ^\text{GEQ}) \lesssim \nu^{-1} \Big(\| \sigma - \bar \sigma_h \|
    + \| \sigma - \sigma_h^\text{GEQ} \| +  \mathrm{osc}_{k-2}(\curl(\vecb{f})) \Big),
  \end{align*}  
  with the data oscillations
  \begin{align} \label{eq::oscillations}
    \mathrm{osc}_{k-2}(\curl(\vecb{f})) := \Big(\sum_{T \in \mathcal{T}} h_T^2 \| (\id - \vecb{\pi}_{k-2})\curl (\vecb{f}) \|_{T}^2\Big)^{1/2}.
  \end{align}  
  \end{theorem}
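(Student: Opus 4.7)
The plan is to bound each of the three groups of terms appearing in $\eta(\sigma_h^\text{GEQ})^2$ — the data oscillation contribution, the deviatoric term, and the divergence term for $\primaluh$ — and then combine them by a Cauchy inequality.

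First I would dispose of the oscillation contribution: after applying the discrete Cauchy inequality to split the quadratic sum in the definition of $\eta$, the cell-wise factor $c_1 c_2 h_T^2 \| (\id - \vecb{\pi}_{k-2}) \curl (\vecb{f})\|_T$ sums exactly to $\mathrm{osc}_{k-2}(\curl(\vecb{f}))$ (up to the generic constant), so no further work is needed apart from identifying the right-hand side term in \eqref{eq::oscillations}.

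Next I would treat the deviatoric term. Since $\mathrm{dev}$ is the $L^2$-orthogonal projection onto trace-free matrices, $\|\mathrm{dev}(A)\|_T \leq \|A\|_T$ for every $T$, and moreover $\sigma = \nu \nabla \vecb{u}$ has $\mathrm{tr}(\sigma) = \nu \divergence (\vecb{u}) = 0$ so $\mathrm{dev}(\sigma) = \sigma$. A triangle inequality through $\sigma$ then gives
\begin{align*}
  \sum_{T \in \mathcal{T}} \| \mathrm{dev}(\sigma_h^\text{GEQ} - \bar\sigma_h) \|_T^2
  &\leq \| \sigma_h^\text{GEQ} - \bar\sigma_h \|^2 \\
  &\lesssim \| \sigma - \sigma_h^\text{GEQ} \|^2 + \| \sigma - \bar\sigma_h \|^2.
\end{align*}

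For the divergence term, I would use that $\divergence(\vecb{u}) = 0$, so
\begin{align*}
  \| \divergence (\primaluh) \| = \| \divergence (\vecb{u} - \primaluh) \| \leq \| \nabla (\vecb{u} - \primaluh) \| = \nu^{-1} \| \sigma - \bar\sigma_h \|,
\end{align*}
which absorbs $c_0^{-1} \| \divergence (\primaluh)\|$ into a $\nu^{-1} \| \sigma - \bar\sigma_h\|$ contribution (with $c_0^{-1}$ hidden in the $\lesssim$). Combining the three bounds, applying a discrete Cauchy inequality to convert the squared sum in $\eta(\sigma_h^\text{GEQ})^2$ into the sum of squared global quantities, and taking a square root yields the asserted bound. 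The main obstacle is essentially bookkeeping: all three ingredients are standard once one observes that $\mathrm{dev}$ is a contraction and that $\vecb{u}$ is divergence-free, so the argument is a short calculation rather than a delicate estimate.
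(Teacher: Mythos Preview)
Your proposal is correct and follows essentially the same route as the paper's proof: both bound the deviatoric term via the contraction $\|\mathrm{dev}(A)\|\le\|A\|$ followed by a triangle inequality through $\sigma$, and both bound the divergence term via $\divergence(\vecb{u})=0$ and $\|\divergence(\vecb{u}-\primaluh)\|\le\nu^{-1}\|\sigma-\bar\sigma_h\|$. Your remark that $\mathrm{dev}(\sigma)=\sigma$ is correct but superfluous here, since you (like the paper) only use the contraction property of $\mathrm{dev}$ and never actually insert $\sigma$ inside the deviator.
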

  Note that the oscillations and the second term on the right-hand
  side (estimated in Theorem~\ref{thm:MCSapriori}) are of order
  $h_T^{k+1}$ if the data is smooth enough.  
  \begin{proof}
    This follows by the definition of the estimator $\eta$ of Theorem \ref{thm:EQestimator} and the
    triangle inequality 
  \begin{align*}
    \| \mathrm{dev}(\sigma_h -  \bar \sigma_h) \|
    \leq \| \sigma_h -  \bar \sigma_h \|
    \leq  \| \sigma - \bar \sigma_h \|
    + \| \sigma - \sigma_h \|,
  \end{align*}
  and the estimate
  \begin{align*}
    \| \mathrm{div}(\primaluh) \|
    = \| \mathrm{div}(u - \primaluh) \|
    \leq \nu^{-1} \| \sigma - \bar \sigma_h \|.
  \end{align*}
  
  \end{proof}


\section{Local Equilibration}\label{sec:localdesign}
This section suggests some design of an admissible pressure-robust equilibrated flux \(\sigma_h = \sigma^\text{LEQ}_h\)
for Theorem~\ref{thm:EQestimator} based on local problems on vertex patches.

\subsection{Setup of the local problems}
Let $\mathcal{V}$ be the set of vertices of a triangulation
$\mathcal{T}$. For $V \in \mathcal{V}$ let $\omega_V$ denote  the
corresponding vertex patch, i.e.\ the union of all adjacent cells in
\(\mathcal{T}_V := \lbrace T \in \mathcal{T} : V \in \overline{T}
\rbrace\). Furthermore, $\mathcal{F}_V$ denotes the set of facets
within the vertex patch including the facets on the boundary $\partial
\omega_V$.  For a fixed interior vertex $V$ we define the following
spaces with $k = r$ (recall that $r$ is the optimal convergence rate
of the primal method)
\begin{align*}
  \Sigma^V_h &:= \{ \tau_h \in L^2(\mathcal{T}_V)^{d \times d}: \forall T \in \mathcal{T}_V, \ \tau_h|_T \in P_k(T)^{d \times d} \text{ with } \mathrm{tr}(\tau_h) = 0\},\\ 
  \vecb{V}^V_h &:= \textrm{RT}_k(\mathcal{T}_V), \\
  \vecb{\hat V}^V_h &:= \{\vecb{\hat v}_h \in  \vecb{L}^2(\mathcal{F}_V): \forall F \in \mathcal{F}_V, \ \vecb{\hat v}_h|_{F} \in \vecb{P}_k(F) \textrm{ and } (\vecb{\hat v}_h)_n = 0\}, \\
  Q^V_h &:= \{ q_h \in L^2(\mathcal{T}_V): \forall T \in \mathcal{T}_V, \ q_h|_{T} \in P_k(T)\}.
\end{align*}
Note that in contrast to the global stress space $\Sigma_h$, the local
stress space $\Sigma^V_h$ is broken, i.e., does not include the  continuity constraint
$\jump{(\tau_h)_{nt}} = 0$. Similarly to other local equilibration
setups, see for example \cite{Braess:2008}, the space $\vecb{\hat
V}^V_h$ is chosen such that the normal-tangential trace of functions
in $\Sigma^V_h$ lie in $\vecb{\hat V}^V_h$. 

For the local problems we then further define the product space
\begin{align} \label{factorspace}
  \boldsymbol{X}^V_h :=  (\vecb{ V}^V_h \times \vecb{\hat V}^V_h) / \{ (\vecb{c}, \vecb{c}_t): \vecb{c} \in \R^d \},
\end{align}
where $\vecb{c}$ denotes a vector-valued constant, and $(\vecb{c},
\vecb{c}_t)$ reads as (a constant) element of the product space
$\vecb{ V}^V_h \times \vecb{\hat V}^V_h$. Hence, the space
$\boldsymbol{X}^V_h$ is factorised by vector-valued constant functions on
the patch.

The projection onto vector-valued constants $\vecb{\pi}_{\R}^V: \vecb{L}^2(\mathcal{T}_V)
\times [\vecb{L}^2(\mathcal{F}_V)]_t \rightarrow \R^d$ is given by
\begin{align*}
  \vecb{\pi}_{\R}^V(\vecb{v}_h,\vecb{\hat v}_h) := \frac{1}{|\mathcal{T}_V| + |\mathcal{F}_V|} \Big(\sum_{T \in \mathcal{T}_V} \int_T \vecb{ v}_h \dif x + \sum_{F \in \mathcal{F}_V} \int_F \vecb{\hat v}_h \dif s \Big) \in \R^d. 
\end{align*}
Here the quantities $|\mathcal{T}_V|$ and $|\mathcal{F}_V|$ denote the
area of the element patch and the skeleton of the patch respectively.
Note that we then have the equality
\begin{align} \label{factorspace::equal}
  \boldsymbol{X}^V_h = \{ (\vecb{ v}_h,\vecb{\hat v}_h) \in \vecb{V}^V_h \times \vecb{\hat V}^V_h: 
   (\id - \vecb{\pi}_{\R}^V)(\vecb{ v}_h,\vecb{\hat v}_h) \neq  \vecb{0} \}.
\end{align}
For each element $T \in \mathcal{T}$ and every vertex $V \in T$ we
define the scalar linear operator
\begin{align*}
  B_T^V: P_{k+1}(T) \rightarrow P_{k}(T), ~v_h \mapsto B_T^V(v_h) := I_{\mathcal{L}}^{k}(\phi_V v_h),
\end{align*}
where $I_{\mathcal{L}}^{k}$ denotes the nodal (Lagrange)
interpolation operator into $P_{k}(T)$ and $\phi_V$ is the hat
function of the vertex $V$. By that we then define on $\omega_V$ the
scalar bubble projector (see also \cite{2016arXiv160903701L})
\begin{align*}
B^V: P_{k+1}(\mathcal{T}_V) \rightarrow P_{k}(\mathcal{T}_V), ~v_h \mapsto B^V(v_h) := \sum\limits_{T \in \mathcal{T}_V}B_T^V(v_h),
\end{align*}
and the (vector-valued) bubble projector
\begin{align*}
  \boldsymbol{B}^V: \boldsymbol{P}_{k+1}(\mathcal{T}_V) \rightarrow \boldsymbol{P}_{k}(\mathcal{T}_V) , ~\vecb{v}_h \mapsto \boldsymbol{B}^V(\vecb{v}_h)\quad \textrm{with} \quad (\boldsymbol{B}^V(\vecb{v}_h))_j:= B^V((\vecb{v}_h)_j),
\end{align*}
where $(\cdot)_j$ denotes the $j$-th component of the vector.
\begin{lemma} \label{lem::bubbleprops}
  The bubble projector $\boldsymbol{B}^V$ fulfills the following properties:
  \begin{enumerate} 
  \item \label{lem::bubbleprops_one}
  $\boldsymbol{B}^V(\vecb{v}_h)|_{\partial \omega_V} =0$ for
  all $\vecb{v}_h \in \vecb{V}_h^V$ 
  \item \label{lem::bubbleprops_two} Let $\vecb{ v}_h^V \in \vecb{
  V}_h^V$ then $\boldsymbol{B}^V(\vecb{ v}_h^V) \in
  \mathrm{BDM}_k(\mathcal{T}_V)$.  
  \item \label{lem::bubbleprops_three} For all elements $T \in \mathcal{T}$ we have the partition of unity property
    \begin{align*}
    \sum\limits_{V \in T} \boldsymbol{B}^V(\vecb{v}_h|_T) =  \vecb{v}_h|_T  \quad \text{for all } \vecb{v}_h \in \vecb{V}_h \textrm{ with } \div(\vecb{v}_h) = 0.
    \end{align*}
\item \label{lem::bubbleprops_four} For a constant $\vecb{c} \in \R^d$ there holds $\boldsymbol{B}^V(\vecb{c}) = \phi_V \vecb{c}$.
  \end{enumerate}
\end{lemma}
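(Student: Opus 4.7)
The plan is to verify the four assertions in turn, using only pointwise properties of the Lagrange interpolation operator $I_{\mathcal{L}}^k$, the vanishing of $\phi_V$ on $\partial\omega_V$, and the partition of unity $\sum_{V\in T}\phi_V|_T = 1$ on every $T\in\mathcal{T}_V$.

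For \eqref{lem::bubbleprops_one} and \eqref{lem::bubbleprops_two}, I would argue as follows. Any facet $F \subset \partial \omega_V$ has $\phi_V(\vecb{x}_i) = 0$ at every Lagrange node $\vecb{x}_i \in F$; since the trace of a $P_k$ polynomial on $F$ is uniquely determined by its nodal values on $F$, the local interpolant $I_{\mathcal{L}}^k(\phi_V \vecb{v}_h|_T)$ must vanish on $F$, giving \eqref{lem::bubbleprops_one}. On each interior facet $F \in \mathcal{F}_V$ shared by two cells $T_1,T_2 \in \mathcal{T}_V$, the Lagrange nodes on $F$ are common, $\phi_V$ is continuous and the normal component $(\vecb{v}_h)_n$ is single-valued across $F$ by $H(\divergence)$-conformity of $\mathrm{RT}_k$. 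Hence the normal trace of $\boldsymbol{B}^V(\vecb{v}_h)$ is continuous across $F$, so $\boldsymbol{B}^V(\vecb{v}_h) \in H(\divergence,\omega_V) \cap \vecb{P}_k(\mathcal{T}_V) = \mathrm{BDM}_k(\mathcal{T}_V)$, proving \eqref{lem::bubbleprops_two}.

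For the partition of unity \eqref{lem::bubbleprops_three}, linearity of $I_{\mathcal{L}}^k$ together with $\sum_{V\in T}\phi_V|_T = 1$ gives
\begin{align*}
  \sum_{V \in T} \boldsymbol{B}^V(\vecb{v}_h|_T)
  = I_{\mathcal{L}}^k\Bigl(\sum_{V \in T} \phi_V \vecb{v}_h|_T\Bigr)
  = I_{\mathcal{L}}^k(\vecb{v}_h|_T).
\end{align*}
The main obstacle, and the only non-trivial step, is to show that divergence-freeness forces $\vecb{v}_h|_T \in \vecb{P}_k(T)$, so that the $P_k$ nodal interpolant reproduces it exactly. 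Writing $\vecb{v}_h|_T = \vecb{a}_T + b_T \vecb{x}$ with $\vecb{a}_T \in \vecb{P}_k(T)$, $b_T \in P_k(T)$, and splitting $b_T = b_T^{<k} + b_T^{k}$ into its part of degree $\le k-1$ and its homogeneous degree-$k$ component, the identity $\divergence(b_T \vecb{x}) = d\,b_T + \vecb{x}\cdot\nabla b_T$ shows that the highest-degree contribution in $\divergence(\vecb{v}_h|_T)$ is the homogeneous degree-$k$ polynomial $(d+k)\,b_T^{k}$. Hence $\divergence(\vecb{v}_h|_T) = 0$ forces $b_T^{k} = 0$, so $b_T \vecb{x} \in \vecb{P}_k(T)^d$ and $\vecb{v}_h|_T \in \vecb{P}_k(T)$, as required.

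Finally, \eqref{lem::bubbleprops_four} follows from $\phi_V \vecb{c} \in \vecb{P}_1(T) \subset \vecb{P}_k(T)$ on each $T \in \mathcal{T}_V$ (recall $k = r \geq 1$ for all relevant Stokes pairs in Table \ref{infsuppairs}): the nodal interpolant is exact on $\vecb{P}_k(T)$, so $B_T^V(\vecb{c}) = \phi_V|_T\,\vecb{c}$ on each cell, whence $\boldsymbol{B}^V(\vecb{c}) = \phi_V\vecb{c}$ on $\omega_V$.
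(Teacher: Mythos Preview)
Your proof is correct and follows essentially the same approach as the paper's. The only notable difference is in item \eqref{lem::bubbleprops_three}: the paper simply invokes the known inclusion ``divergence-free $\mathrm{RT}_k$ lies in $\mathrm{BDM}_k$'' to conclude $\vecb{v}_h|_T \in \vecb{P}_k(T)$, whereas you supply an explicit elementwise argument via the homogeneous-degree decomposition of $b_T$, which is a perfectly valid (and self-contained) way of establishing the same fact.
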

\begin{proof}
  Item \ref{lem::bubbleprops_one} follows by the definition and the
  linearity of the bubble projection. For the proof of
  \ref{lem::bubbleprops_two} choose an arbitrary edge $F \in
  \mathcal{F}_V$ with the corresponding normal vector
  $\boldsymbol{n}$. Since $\vecb{v}_h^V$ is normal continuous we have
  by the properties of the nodal interpolation operator that
  \begin{align*}
\jump{(\boldsymbol{B}^V(\vecb{ v}_h^V)(\vecb{x}))_n} = \phi_V(\vecb{x})\jump{(\vecb{ v}_h^V(\vecb{x}))_n}=0 \quad \text{for all } \vecb{x} \in F.
  \end{align*}
  This shows that $\boldsymbol{B}^V(\vecb{ v}_h^V)$ is normal
  continuous and as $\boldsymbol{B}^V(\vecb{ v}_h^V) \in
  \vecb{P}_k(\mathcal{T}_V)$ it follows that
  $\boldsymbol{B}^V(\vecb{v}_h^V) \in \mathrm{BDM}_k(\mathcal{T}_V)$.
  For \ref{lem::bubbleprops_three} let $\vecb{v}_h \in \vecb{V}_h$
  with $\div(\vecb{v}_h) = 0$ be arbitrary. Since $\vecb{v}_h$ is
  divergence-free it follows that $\vecb{v}_h \in
  \mathrm{BDM}_k(\mathcal{T})$, thus $\vecb{v}_h|_T \in \vecb{P}_k(T)$
  and so $I^{k}_{\mathcal{L}}(\vecb{v}_h|_T) = \vecb{v}_h|_T$. The
  claimed partition of unity property then follows by the linearity of
  the bubble projector $\boldsymbol{B}^V$.
  The last item \ref{lem::bubbleprops_four} follows for each
  component separately since there holds $I^{k}_{\mathcal{L}}(c
  \phi_V) = c \phi_V$ on each element
  $T\in\mathcal{T}_V$ and all $c \in \R$. This concludes the proof.
\end{proof}
For each vertex $V$ we solve the local problem: Find $(\sigma^V_h,
(\vecb{ u}^V_h, \vecb{\hat u}^V_h), p^V_h) \in \Sigma^V_h \times
\boldsymbol{X}^V_h \times Q^V_h$ such that, for all $(\tau^V_h,
(\vecb{ v}^V_h, \vecb{\hat v}^V_h), q^V_h) \in \Sigma^V_h \times
\boldsymbol{X}^V_h \times Q^V_h$,
\begin{subequations}
  \label{eq:local_prob}
  \begin{align}
     (\sigma^V_h, \tau^V_h)_{\omega_V} + \langle \div(\tau^V_h),(\vecb{ u}^V_h, \vecb{\hat u}^V_h)\rangle_{\vecb{V}_h^V}&= 0\\
     \langle \div (\sigma^V_h), (\vecb{ v}^V_h, \vecb{\hat v}_h^V)\rangle_{\vecb{V}_h^V} + (\divergence(\vecb{ v}^V_h), p^V_h)_{\omega_V} &= \vecb{r}^V((\vecb{ v}^V_h,\vecb{\hat v}_h^V)), \label{eq:local_probtwo}\\
     (\divergence(\vecb{ u}^V_h), q^V_h)_{\omega_V} &= 0,
  \end{align}
\end{subequations}
with the bilinear form
\begin{subequations}
  \begin{align*}
    \langle \div (\tau^V_h),(\vecb{ u}^V_h, \vecb{\hat u}^V_h)\rangle_{\vecb{V}_h^V}
    :=& \sum_{T\in\mathcal{T}_V}(\divergence(\tau_h^V) , \vecb{ u}^V_h)_T\\
        &- \sum_{F \in \mathcal{F}_V}  ( \jump{(\tau_h^V)_{nn}}, (\vecb{ u}^V_h)_n)_F + (\jump{(\tau_h^V)_{nt}}, \vecb{\hat u}_h^V)_F ,
  \end{align*}
  \label{eq:bilin_forms_local}
\end{subequations}
and the local residuum for the solution $\primaluh,\primalph$ of
\eqref{primalform} with $\bar \sigma_h = \nu \nabla \primaluh$
given by 
\begin{multline*}
   \vecb{r}^V((\vecb{ v}^V_h, \vecb{\hat v}_h^V)) := \sum_{T\in\mathcal{T}_V} ( \vecb{f} , \boldsymbol{B}^V(\vecb{ v}_h^V) )_T + \sum_{T\in\mathcal{T}_V} ( \nu \Delta \primaluh - \nabla \primalph,  \boldsymbol{B}^V(\vecb{ v}_h^V))_T\\
  - ( (\bar \sigma_h - \primalph \identity)_{nn}, (\boldsymbol{B}^V(\vecb{ v}_h^V))_n)_{\partial T}- ( \phi_V (\bar \sigma_h)_{nt},\vecb{\hat v}_h^V)_{\partial T}.
\end{multline*}
Note that $\langle \div (\cdot), (\cdot, \cdot)\rangle_{\vecb{V}_h^V}$
reads as a restriction of the discrete duality pair $\langle
\divergence(\cdot), \cdot \rangle_{\vecb{V}_h}$ onto $\omega_V$ but
further includes the normal-tangential jumps since functions in
$\Sigma^V_h$ are not (normal-tangential) continuous. This shows that
\eqref{eq:local_prob} reads as a local version of the global
problem given by \eqref{global:MCS} where the (normal-tangential)
continuity of the stress variable $\sigma_h^V$ is incorporated by a
Lagrange multiplier in $\vecb{ \hat V}_h^V$. 

Using integration by parts, the right hand side can also be
written as
\begin{align} \label{rhsintbyparts}
    \vecb{r}^V((\vecb{ v}^V_h, \vecb{\hat v}_h^V)) =  &\sum_{T\in\mathcal{T}_V}( \vecb{f}, \boldsymbol{B}^V(\vecb{ v}^V_h) )_T - ( \bar \sigma_h, \nabla \boldsymbol{B}^V(\vecb{ v}^V_h))_T \\
    &+ (\primalph, \divergence(\boldsymbol{B}^V(\vecb{ v}^V_h)))_T 
                                + (  (\bar \sigma_h)_{nt}, ( \boldsymbol{B}^V(\vecb{ v}_h^V) - \phi_V \vecb{\hat v}_h^V)_t)_{\partial T}. \nonumber
\end{align}

\begin{remark}
  As usual for equilibrated error estimators we slightly modify the
  definition of the local problems when the vertex $V$ lies on the
  Dirichlet boundary. In this case the degrees of freedom
  of $\vecb{\hat V}^V_h$ on the domain boundary are removed. Accordingly, $\mathcal{F}_V$
  gets replaced by $\mathcal{F}_V \setminus \{F \in
  \mathcal{F}_V: F \subset \partial \Omega \}$. Moreover, the
  mean value constraint of the product space is removed, i.e.\ we set
  $\boldsymbol{X}^V_h :=  (\vecb{ V}^V_h \times \vecb{\hat
  V}^V_h)$.
\end{remark}
\subsection{Analysis of the local problem}
For the analysis consider the norms
\begin{align*}
  \| \sigma_h^V \|_{\Sigma^V_h}^2 &:= \sum_T \| \sigma_h^V \|^2_{T} + h_T \| (\sigma^V_h)_{nt}\|^2_{\partial T}, \\
  \| (\vecb{ v}^V_h,\vecb{\hat v}^V_h) \|_{\vecb{X}^V_h}^2 &:= \sum_T \| \nabla \vecb{ v}_h^V \|^2_{T} + \frac{1}{h_T} \| (\vecb{\hat v}_h^V - \vecb{ v}_h^V)_t \|^2_{\partial T},\\
  \| p_h^V \|_{Q^V_h} &= \| p_h^V \|_{\omega_V}.
\end{align*}
Note that the the norm $ \| \cdot \|_{\vecb{X}^V_h}$ reads as an
$H^1$-like norm on the velocity space $\vecb{X}_h$ since the Lagrange
multipliers in $\vecb{\hat V}^V_h$ can be interpreted as the
tangential component of the local velocities in $\vecb{V}^V_h$. Such
norms are very common in the analysis of hybrid discontinuous Galerkin
methods, see for example \cite{doi:10.1137/17M1138078}. Further we define the kernel of the constraints given by
\begin{align*} \mathcal{K}^V := \{(\sigma_h^V,p_h^V) &\in \Sigma_h^V
\times Q_h^V: \forall (\vecb{v}_h^V, \vecb{\hat v}_h^V) \in
\boldsymbol{X}^V_h, \\
&\langle \div (\sigma^V_h), (\vecb{ v}^V_h,
\vecb{\hat v}_h^V)\rangle_{\vecb{V}_h^V} + (\divergence(\vecb{v}^V_h),
p^V_h)_{\omega_V} = 0\}.
\end{align*}
\begin{lemma}\label{lemma::stability} 
  The following stability conditions hold true:
  \begin{itemize}
    \item Continuity: For all  $\sigma_h^V, \tau_h^V \in
    \Sigma_h^V$, $(\vecb{ v}_h^V, \vecb{\hat
    v}_h^V) \in \boldsymbol{X}^V_h$ and $q_h \in Q_h^V$ we have
    \begin{align*}
      (\sigma_h^V, \tau_h^V)_{\omega_V} &\lesssim \| \sigma_h^V
      \|_{\Sigma^V_h} \| \tau^V \|_{\Sigma^V_h}, \\
      \langle \div (\sigma^V_h), (\vecb{ v}^V_h, \vecb{\hat
v}_h^V)\rangle_{\vecb{V}_h^V} & \lesssim \| \sigma_h^V \|_{\Sigma^V_h}
\| (\vecb{ v}^V_h,\vecb{\hat v}^V_h) \|_{\vecb{X}^V_h}, \\
(\divergence(\vecb{v}^V_h), q^V_h)_{\omega_V} &\lesssim \| (\vecb{
v}^V_h,\vecb{\hat v}^V_h) \|_{\vecb{X}^V_h} \| q_h\|_{Q_h^V}.
    \end{align*}
  \item Kernel coercivity: For all $(\sigma_h^V,p_h^V)\in
  \mathcal{K}^V$ we have
  \begin{align*}
    (\sigma^V_h, \tau^V_h)_{\omega_V} \gtrsim (\| \sigma_h^V \|_{\Sigma^V_h}^2 + \| p_h^V \|_{Q^V_h}^2).
  \end{align*}
  \item The inf-sup conditions:
  \begin{enumerate}
  \item \label{infsup_one} For all $(\vecb{ v}_h^V, \vecb{\hat v}_h^V) \in
  \boldsymbol{X}^V_h$ there exists a constant $\beta_1 > 0$ such that
    \begin{align*}
      \sup\limits_{(\sigma_h^V,p_h^V) \in \Sigma_h^V \times Q_h^V} \frac{\langle \div (\sigma^V_h), (\vecb{ v}^V_h,
      \vecb{\hat v}_h^V)\rangle_{\vecb{V}_h^V} + (\div(\vecb{ v}^V_h), p^V_h)_{\omega_V}}{\| \sigma_h^V \|_{\Sigma^V_h}\| + \| p_h^V \|_{Q^V_h}} \ge \beta_1 \| (\vecb{ v}^V_h,\vecb{\hat v}^V_h) \|_{\vecb{X}^V_h}.
    \end{align*}
    \item \label{infsup_two} For all $(\vecb{ v}_h^V, \vecb{\hat v}_h^V) \in \boldsymbol{X}^V_h$ with $\divergence(\vecb{ v}^V_h) = 0$ there exists a constant $\beta_2 > 0$ such that
    \begin{align*}
      \sup\limits_{\sigma_h^V \in \Sigma_h^V} \frac{\langle \div (\sigma^V_h), (\vecb{ v}^V_h,
      \vecb{\hat v}_h^V)\rangle_{\vecb{V}_h^V}}{\| \sigma_h^V \|_{\Sigma^V_h} } \ge \beta_2 \| (\vecb{ v}^V_h,\vecb{\hat v}^V_h) \|_{\vecb{X}^V_h}.
    \end{align*}
  \end{enumerate}
\end{itemize}
\end{lemma}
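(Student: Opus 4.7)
The plan is to handle the four items in the order given, exploiting that on each vertex patch everything is polynomial so inverse and trace inequalities are available, and that the abstract structure mirrors the global MCS system from \cite{10.1093/imanum/drz022,lederer2019mass}; the proofs should parallel the stability analysis there, with the extra care needed for the hybrid facet variable and for the factorisation by vector-valued constants.

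For continuity, the first and third bounds follow from Cauchy--Schwarz and $\|\divergence(\vecb{v}_h^V)\|_T \le \sqrt{d}\,\|\nabla \vecb{v}_h^V\|_T$. The bilinear form $\langle \div(\sigma_h^V),(\vecb{v}_h^V,\vecb{\hat v}_h^V)\rangle_{\vecb{V}_h^V}$ is controlled by first integrating by parts into the form $-\sum_T(\sigma_h^V,\nabla \vecb{v}_h^V)_T + \sum_F ((\sigma_h^V)_{nt},\jump{(\vecb{v}_h^V)_t}-\vecb{\hat v}_h^V)_F$, then applying Cauchy--Schwarz element-by-element and face-by-face, matching the $h_T^{\pm 1/2}$ weights that appear in $\|\cdot\|_{\Sigma_h^V}$ and $\|\cdot\|_{\vecb{X}_h^V}$. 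For kernel coercivity, a polynomial inverse trace inequality yields $h_T\|(\sigma_h^V)_{nt}\|_{\partial T}^2 \lesssim \|\sigma_h^V\|_T^2$, so $(\sigma_h^V,\sigma_h^V)_{\omega_V}$ controls $\|\sigma_h^V\|_{\Sigma_h^V}^2$. To also control $\|p_h^V\|_{Q_h^V}$, pick a test pair $(\vecb{v}_h^V,\vecb{\hat v}_h^V)\in \vecb{X}_h^V$ with $\divergence(\vecb{v}_h^V)=p_h^V$ (on an interior patch, adjusting a constant mode that is compatible with the factor space), insert it into the kernel condition, and use the continuity bound already proved.

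The main obstacle is the inf-sup condition (ii), since (i) can then be obtained from (ii) by a standard two-step Fortin decomposition using the discrete inf-sup of $\divergence:\vecb{V}_h^V\to Q_h^V$ to first match the pressure divergence and then handle the remaining divergence-free velocity component with (ii). For (ii), the crucial observation is that when $\divergence(\vecb{v}_h^V)=0$ the gradient $\nabla \vecb{v}_h^V$ is pointwise trace-free, so $\nabla \vecb{v}_h^V|_T$ itself belongs componentwise to the local stress space (without the $\mathrm{tr}=0$ constraint causing any loss). The recipe is therefore to construct $\sigma_h^V$ element-by-element as $\sigma_h^V|_T := \alpha\,\nabla \vecb{v}_h^V|_T + \beta\,\Pi_T\bigl(\jump{(\vecb{v}_h^V)_t}-\vecb{\hat v}_h^V\bigr)$ where the second term is a polynomial lifting of the tangential-jump data localised near $\partial T$ so that its normal-tangential trace on each $F\subset \partial T$ reproduces the jump data and its $L^2(T)$-norm is controlled by $h_T^{1/2}\|\jump{(\vecb{v}_h^V)_t}-\vecb{\hat v}_h^V\|_{\partial T}$ (a scaled polynomial face extension, as used in hybrid-DG analyses such as \cite{doi:10.1137/17M1138078}). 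Testing $\langle \div(\sigma_h^V),(\vecb{v}_h^V,\vecb{\hat v}_h^V)\rangle_{\vecb{V}_h^V}$ with this choice produces $\alpha \|\nabla \vecb{v}_h^V\|_T^2$ from the interior term and $\beta\,h_T^{-1}\|\jump{(\vecb{v}_h^V)_t}-\vecb{\hat v}_h^V\|_{\partial T}^2$ from the face term (after suitable scalings), cross-terms being absorbed by Young's inequality provided $\alpha,\beta$ are chosen to balance; this yields the lower bound $\|(\vecb{v}_h^V,\vecb{\hat v}_h^V)\|_{\vecb{X}_h^V}^2$ on the right and $\|\sigma_h^V\|_{\Sigma_h^V}\|(\vecb{v}_h^V,\vecb{\hat v}_h^V)\|_{\vecb{X}_h^V}$ on the left after applying the already proved continuity in the denominator.

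The role of the quotient by $\{(\vecb{c},\vecb{c}_t):\vecb{c}\in\R^d\}$ is exactly to exclude the one non-trivial element of the kernel of $\langle\div(\sigma_h^V),(\cdot,\cdot)\rangle_{\vecb{V}_h^V}$ on $\vecb{V}_h^V\times\vecb{\hat V}_h^V$, so that the above construction produces a strictly positive bound and the norm $\|\cdot\|_{\vecb{X}_h^V}$ is indeed a norm on the factor space (a Poincaré-type inequality on $\omega_V$ modulo constants). On boundary patches the quotient disappears and the norm is already a norm because of the zero trace of the normal component, so the same construction applies without modification.
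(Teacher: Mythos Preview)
Your proposal is correct and follows essentially the same approach as the paper: the paper's own proof is little more than a citation, stating that continuity follows from Cauchy--Schwarz plus integration by parts and that kernel coercivity and the inf-sup conditions follow ``with exactly the same steps as in the stability proofs of the original MCS-method'' in \cite{10.1093/imanum/drz022,Lederer:2019c,lederer2019mass}, adapted to the hybridized setting with the HDG-type norm $\|\cdot\|_{\vecb{X}_h^V}$ and the factor space to remove the constant kernel. Your sketch is precisely a fleshed-out version of that argument; one small imprecision is that after integration by parts the face term should be written per element boundary as $\sum_T((\sigma_h^V)_{nt},(\vecb{v}_h^V-\vecb{\hat v}_h^V)_t)_{\partial T}$ rather than per face (since $\sigma_h^V$ is broken), but this does not affect the Cauchy--Schwarz bound.
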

\begin{proof}
  The continuity follows immediately with the Cauchy--Schwarz
  inequality and using integration by parts for integrals of the
  bilinear form  $\langle \div (\cdot), (\cdot,
  \cdot)\rangle_{\vecb{V}_h^V} $. The proofs of the kernel ellipticity
  and the inf-sup conditions follow with exactly the same steps as in
  the stability proofs of the original MCS-method in
  \cite{10.1093/imanum/drz022,Lederer:2019c,lederer2019mass}, since
  the bilinear forms and spaces of the local problems in this work
  simply read as a hybridized version of the original MCS-method. In
  this work the normal-tangential continuity of the stress space is
  incorporated by the additional Lagrange multiplier $\vecb{\hat
  u}_h^V$ and we switched from the $H^1$-like DG norm used in the
  original works to the corresponding $H^1$-like HDG norm given by $\|
  \cdot \|_{\vecb{X}^V_h}$ in this work. Note however, that we do not
  have zero Dirichlet boundary conditions of the velocity variable,
  but since we excluded the kernel of $\| \cdot \|_{\vecb{X}^V_h}$
  (constant functions) in the definition of the space
  $\boldsymbol{X}^V_h$, the results simply follow by norm equivalence.
\end{proof}
\begin{theorem}\label{theorem::stability} There exists a unique
  solution $(\sigma_h^V,(\vecb{ u}_h^V,\vecb{\hat u}_h^V), p_h^V) \in
  \Sigma^V_h \times \boldsymbol{X}^V_h \times Q^V_h$ of
  \eqref{eq:local_prob} with the stability estimate
  \begin{align*}
    \| \sigma_h^V \|_{\Sigma^V_h} + \| (\vecb{u}^V_h,\vecb{\hat u}^V_h) \|_{\vecb{X}^V_h} &\lesssim \sup\limits_{\substack{(\vecb{ v}_h^V, \vecb{\hat v}_h^V) \in \boldsymbol{X}^V_h\\\div(\vecb{ v}_h^V) = 0}} \frac{\vecb{r}^V((\vecb{ v}^V_h,\vecb{\hat v}^V_h))}{\| (\vecb{ v}^V_h,\vecb{\hat v}^V_h) \|_{\vecb{X}^V_h}}
  \end{align*}
\end{theorem}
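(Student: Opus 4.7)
The plan is to first establish well-posedness by recasting \eqref{eq:local_prob} as a standard Brezzi saddle-point problem using the three structural ingredients of Lemma~\ref{lemma::stability}, and then to sharpen the stability estimate by exploiting that the discrete velocity is exactly divergence-free so that the pressure drops out and the second inf-sup condition can take over.

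For existence and uniqueness, I would group the stress and pressure into a combined trial variable $(\sigma_h^V,p_h^V)\in\Sigma_h^V\times Q_h^V$ and treat $(\vecb{u}_h^V,\vecb{\hat u}_h^V)\in\boldsymbol{X}_h^V$ as a single Lagrange multiplier. With
\[
\mathcal{A}((\sigma,p),(\tau,q)) := (\sigma,\tau)_{\omega_V}, \qquad
\mathcal{B}((\sigma,p),(\vecb{v},\vecb{\hat v})) := \langle\div\sigma,(\vecb{v},\vecb{\hat v})\rangle_{\vecb{V}_h^V} + (\div\vecb{v},p)_{\omega_V},
\]
the three equations of \eqref{eq:local_prob} collapse into the standard mixed form $\mathcal{A}(\cdot,\cdot)+\mathcal{B}(\cdot,\cdot)=0$ and $\mathcal{B}(\cdot,\cdot)=\vecb{r}^V(\cdot)$. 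The continuity of $\mathcal{A}$ and $\mathcal{B}$, the kernel coercivity of $\mathcal{A}$ on $\ker\mathcal{B}=\mathcal{K}^V$, and the inf-sup condition for $\mathcal{B}$ on $\boldsymbol{X}_h^V$ are precisely the continuity, kernel coercivity and first inf-sup statements of Lemma~\ref{lemma::stability}. Brezzi's theorem therefore gives existence and uniqueness of $(\sigma_h^V,(\vecb{u}_h^V,\vecb{\hat u}_h^V),p_h^V)$.

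To obtain the claimed sharp stability estimate, I would first note that $\div\vecb{u}_h^V\in Q_h^V$ together with the third equation of \eqref{eq:local_prob} forces $\div\vecb{u}_h^V=0$ pointwise, so that $(\vecb{u}_h^V,\vecb{\hat u}_h^V)$ itself is an admissible divergence-free test pair. Testing the first equation with $\tau_h^V=\sigma_h^V$ gives $\|\sigma_h^V\|_{L^2(\omega_V)}^2=-\langle\div\sigma_h^V,(\vecb{u}_h^V,\vecb{\hat u}_h^V)\rangle_{\vecb{V}_h^V}$, and testing the second equation with the divergence-free pair $(\vecb{u}_h^V,\vecb{\hat u}_h^V)$ eliminates the pressure term and produces $\langle\div\sigma_h^V,(\vecb{u}_h^V,\vecb{\hat u}_h^V)\rangle_{\vecb{V}_h^V}=\vecb{r}^V((\vecb{u}_h^V,\vecb{\hat u}_h^V))$. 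Denoting the supremum on the right-hand side of the theorem by $S$, the combination yields $\|\sigma_h^V\|_{L^2(\omega_V)}^2=-\vecb{r}^V((\vecb{u}_h^V,\vecb{\hat u}_h^V))\le S\,\|(\vecb{u}_h^V,\vecb{\hat u}_h^V)\|_{\vecb{X}_h^V}$. Next, the second inf-sup condition of Lemma~\ref{lemma::stability}, applied to the divergence-free pair $(\vecb{u}_h^V,\vecb{\hat u}_h^V)$, together with the identity $\langle\div\tau_h^V,(\vecb{u}_h^V,\vecb{\hat u}_h^V)\rangle_{\vecb{V}_h^V}=-(\sigma_h^V,\tau_h^V)_{\omega_V}$ from the first equation, yields
\[
\|(\vecb{u}_h^V,\vecb{\hat u}_h^V)\|_{\vecb{X}_h^V} \lesssim \sup_{\tau_h^V\in\Sigma_h^V}\frac{|(\sigma_h^V,\tau_h^V)_{\omega_V}|}{\|\tau_h^V\|_{\Sigma_h^V}} \le \|\sigma_h^V\|_{L^2(\omega_V)},
\]
using $\|\tau_h^V\|_{L^2(\omega_V)}\le\|\tau_h^V\|_{\Sigma_h^V}$. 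Inserting this into the previous inequality and dividing gives $\|\sigma_h^V\|_{L^2(\omega_V)}\lesssim S$ and then $\|(\vecb{u}_h^V,\vecb{\hat u}_h^V)\|_{\vecb{X}_h^V}\lesssim S$.

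It remains to upgrade the $L^2$-norm of $\sigma_h^V$ to the full $\|\cdot\|_{\Sigma_h^V}$-norm, which I would do with the standard polynomial trace-inverse inequality $h_T\|\tau_{nt}\|_{\partial T}^2\lesssim\|\tau\|_T^2$ on each $T\in\mathcal{T}_V$, whose constant only depends on $k$ and the shape of $T$. This gives $\|\sigma_h^V\|_{\Sigma_h^V}\lesssim\|\sigma_h^V\|_{L^2(\omega_V)}\lesssim S$ and completes the estimate. The main obstacle I expect is precisely the sharpening step: the abstract Brezzi bound only delivers the supremum over all of $\boldsymbol{X}_h^V$, and the essential trick is to recognise that, because $\vecb{u}_h^V$ is exactly divergence-free and can itself serve as the test function in the second equation, the pressure contribution vanishes and the second inf-sup condition on the divergence-free subspace yields the bound in terms of the weaker dual norm.
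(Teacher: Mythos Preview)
Your proposal is correct and follows the same approach the paper sketches: Brezzi's theorem with the ingredients of Lemma~\ref{lemma::stability} for well-posedness, and then the second inf-sup condition~\ref{infsup_two} on the divergence-free subspace to obtain the sharp stability estimate. The paper's proof is only three sentences and merely points to ``standard estimates employing the solvability''; you have spelled out precisely those standard estimates (testing with $\tau_h^V=\sigma_h^V$, using that $\divergence\vecb{u}_h^V=0$ lets the pressure drop out when the second equation is tested with $(\vecb{u}_h^V,\vecb{\hat u}_h^V)$, and then invoking inf-sup~\ref{infsup_two} together with the first equation to bound the velocity by the stress), so there is nothing to correct.
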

\begin{proof}
  The solvability of \eqref{eq:local_prob} follows with the standard
  theory of saddle point problems (i.e.\ Brezzi's Theorem), see for
  example in \cite{Boffi2008Mixed} and the estimitates of Lemma
  \ref{lemma::stability}. The stability estimate follows by the
  inf-sup condition on the subspace of divergence-free functions, see
  item \ref{infsup_two} of Lemma \ref{lemma::stability}, and standard
  estimates employing the solvability of \eqref{eq:local_prob}.
\end{proof}

Now let $ \vecb{\hat V}_h$ be the global version of the local space $
\vecb{\hat V}^V_h$, i.e. we define 
\begin{align*}
   \vecb{\hat V}_h &:= \{\vecb{\hat v}_h \in  \vecb{L}^2(\mathcal{F}): \forall F \in \mathcal{F}, \ \vecb{\hat v}_h|_{F} \in \vecb{P}_k(F) \textrm{ and } (\vecb{\hat v}_h)_n = 0\}.
\end{align*}
The next theorem provides several properties of the local solution of
\eqref{eq:local_prob}. First we show that equation \eqref{eq:local_probtwo}
also holds for constants (and not only for functions of the factor
space $\vecb{X}_h^V$), and that the local stress variable has a
vanishing normal-tangential trace. Further we discuss local
equilibrium conditions.
\begin{theorem}[Properties of the local solution]
   \label{localproperties} Let $\sigma_h^V \in \Sigma^V_h$ and $p_h^V
   \in Q_h^V$ be the local solution of \eqref{eq:local_prob}.
   There hold the following properties:
  \begin{enumerate}
    \item For all $(\vecb{v}^V_h,\vecb{\hat v}_h^V) \in \vecb{V}_h^V
    \times \vecb{\hat V}_h^V$ there holds
    \begin{align} \label{equilibrationlocal}
      \langle \div(\sigma^V_h), (\vecb{v}^V_h, \vecb{\hat v}_h^V)\rangle_{\vecb{V}_h^V} + (\div(\vecb{ v}^V_h), p^V_h)_{\omega_V} &= \vecb{r}^V((\vecb{ v}^V_h,\vecb{\hat v}_h^V)).
    \end{align}
  \item For any $(\vecb{v}_h,\vecb{\hat v}_h)$ of the (global) space
  $\vecb{V}_h \times \vecb{\hat V}_h$, with $\divergence( \vecb{v}_h)
  = 0$, there holds the local equilibrium condition
  \begin{align}\label{equilibrationgloballocal}
    \langle \div(\sigma^V_h), (\vecb{v}_h, \vecb{\hat v}_h)\rangle_{\vecb{V}^V_h} &= \vecb{r}^V((\vecb{ v}_h,\vecb{\hat v}_h)).
  \end{align}
  \item The solution $\sigma^V_h$ has a zero normal-tangential trace at the boundary
    \begin{align*}
    (\sigma_h^V)_{nt}=0 \quad \textrm{on} \quad \partial \omega_V.  
    \end{align*}    
      \end{enumerate}
\end{theorem}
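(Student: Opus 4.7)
The plan is to establish properties~(1), (2), (3) in order, with~(1) being the substantive claim and~(2), (3) following by restriction and a tailored choice of test functions. For~(1), the key observation is that the second equation of~\eqref{eq:local_prob} is posed on the factor space $\boldsymbol{X}^V_h$, so extending it to the full space $\vecb{V}^V_h\times\vecb{\hat V}^V_h$ reduces to checking that both sides of~\eqref{equilibrationlocal} vanish on the kernel of the quotient, i.e.\ on constants $(\vecb{c},\vecb{c}_t)$ with $\vecb{c}\in\mathbb{R}^d$.

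On the left-hand side, $(\div(\vecb{c}),p^V_h)_{\omega_V}=0$ trivially, and after elementwise integration by parts the form $\langle\div(\sigma^V_h),(\vecb{v}^V_h,\vecb{\hat v}^V_h)\rangle_{\vecb{V}^V_h}$ is equivalent to $-(\sigma^V_h,\nabla\vecb{v}^V_h)_{\omega_V}$ plus boundary terms involving the HDG-like tangential difference $(\vecb{\hat v}^V_h-\vecb{v}^V_h)_t$, both of which vanish for the constant pair. On the right-hand side, Lemma~\ref{lem::bubbleprops}\,(\ref{lem::bubbleprops_four}) gives $\boldsymbol{B}^V(\vecb{c})=\phi_V\vecb{c}$, and in the integration-by-parts form~\eqref{rhsintbyparts} the facet term $((\bar\sigma_h)_{nt},(\phi_V\vecb{c}-\phi_V\vecb{c}_t)_t)_{\partial T}$ vanishes pointwise because $(\phi_V\vecb{c})_t=\phi_V\vecb{c}_t$. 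What remains is
\begin{align*}
  \vecb{r}^V((\vecb{c},\vecb{c}_t))=(\vecb{f},\phi_V\vecb{c})_{\omega_V}-(\bar\sigma_h,\nabla(\phi_V\vecb{c}))_{\omega_V}+(\primalph,\div(\phi_V\vecb{c}))_{\omega_V}.
\end{align*}
Since $V$ is an interior vertex, $\phi_V\vecb{c}\in\vecb{P}_1(\mathcal{T})\cap\vecb{H}^1_0(\Omega)\subset\primalVh$, and property~\eqref{rec_three} yields $\mathcal{R}(\phi_V\vecb{c})=\phi_V\vecb{c}$. Testing the primal formulation~\eqref{primalform} with $\primalvh=\phi_V\vecb{c}$ and rearranging then shows precisely that this residual vanishes. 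Extending from the factor-space representative to the full space by linearity concludes~(1).

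Property~(2) is then immediate: given $(\vecb{v}_h,\vecb{\hat v}_h)\in\vecb{V}_h\times\vecb{\hat V}_h$ with $\div(\vecb{v}_h)=0$, the restriction to the patch lies in $\vecb{V}^V_h\times\vecb{\hat V}^V_h$ and the divergence term $(\div(\vecb{v}_h),p^V_h)_{\omega_V}$ vanishes on $\omega_V$, so~(1) reduces directly to~\eqref{equilibrationgloballocal}. For property~(3), I would specialise~(1) to $\vecb{v}^V_h=\vecb{0}$ and $\vecb{\hat v}^V_h$ supported on a single boundary facet $F\subset\partial\omega_V$. The bilinear form reduces (up to the sign fixed by the paper's convention) to the single contribution $\bigl((\sigma^V_h)_{nt},\vecb{\hat v}^V_h\bigr)_F$, since on a boundary facet only one patch element abuts and the jump becomes the one-sided trace. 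On the right, the only $\vecb{\hat v}^V_h$-dependent term is $-\bigl(\phi_V(\bar\sigma_h)_{nt},\vecb{\hat v}^V_h\bigr)_F$, which vanishes because $\phi_V|_F=0$ whenever $V$ is an interior vertex and $F\subset\partial\omega_V$. Hence $\bigl((\sigma^V_h)_{nt},\vecb{\hat v}^V_h\bigr)_F=0$ for every admissible $\vecb{\hat v}^V_h\in\vecb{P}_k(F)$ with vanishing normal component; since $(\sigma^V_h)_{nt}|_F$ is itself such a test function, plugging it in gives $\|(\sigma^V_h)_{nt}\|_F=0$, and varying $F$ over $\partial\omega_V$ finishes the argument.

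The main technical hurdle is the verification in~(1) that $\vecb{r}^V((\vecb{c},\vecb{c}_t))=0$, the single place where the bubble identity $\boldsymbol{B}^V(\vecb{c})=\phi_V\vecb{c}$, the reconstruction consistency~\eqref{rec_three} on piecewise-linear $\vecb{H}^1_0$-functions, and the discrete primal formulation all have to align in precisely the right order. Everything else amounts to restriction and careful choice of test functions.
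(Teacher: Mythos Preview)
Your proposal is correct and follows essentially the same approach as the paper: for~(1) you verify that both sides of~\eqref{equilibrationlocal} vanish on the constant pair $(\vecb{c},\vecb{c}_t)$ via the bubble identity $\boldsymbol{B}^V(\vecb{c})=\phi_V\vecb{c}$, the reconstruction property~\eqref{rec_three}, and the primal equation~\eqref{primalform}; for~(2) you restrict; and for~(3) you test with $\vecb{v}^V_h=0$ and a boundary-supported $\vecb{\hat v}^V_h$, using that $\phi_V$ vanishes on $\partial\omega_V$. The only cosmetic difference is that the paper handles all boundary facets simultaneously in~(3) by setting $\vecb{\hat v}^V_h=(\sigma^V_h)_{nt}$ on every $F\subset\partial\omega_V$ at once, whereas you argue facet by facet; both are equivalent.
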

\begin{proof}
  Let $V \in \mathcal{V}$ be an arbitrary but fixed vertex. In a first
  step we will proof that equation \eqref{eq:local_probtwo} also hold
  for constant functions. To this end let $\vecb{c} =
  \vecb{\pi}_\R^V((\vecb{ v}_h^V,\vecb{\hat v}_h^V))$. Using
  $\divergence(\vecb{c}) = 0$ and integration by parts we have for the
  left side of
  \eqref{eq:local_probtwo}
  \begin{align*}
    \langle \div(\sigma^V_h), (\vecb{c}, \vecb{c}_t)\rangle_{\vecb{V}_h^V} &+ (\div(\vecb{c}), p^V_h)_{\omega_V}\\
                           =&\sum_T ( \divergence(\sigma^V_h) , \vecb{c} )_T - \sum_F ( \jump{(\sigma^V_h)_{nn}}, \vecb{c}_n)_F +  (\jump{(\sigma^V_h)_{nt}}, \vecb{c}_t)_F \\
                           =&-\sum_T (\sigma^V_h, \nabla \vecb{c})_T  + \sum_F ( \jump{(\sigma^V_h)_{nt}} , (\vecb{c} - \vecb{c})_t)_F = 0.
  \end{align*}
  We continue with the right-hand side. Using representation
  \eqref{rhsintbyparts} we get for the constant $\vecb{c}$ and using
  item \ref{lem::bubbleprops_four} of Lemma~\ref{lem::bubbleprops} that
  \begin{align*}
    \vecb{r}^V((\vecb{c},\vecb{c}_t))
    =  &\sum_{T\in\mathcal{T}_V} (\vecb{f}, \boldsymbol{B}^V(\vecb{c}))_T - ( \bar \sigma_h,  \nabla\boldsymbol{B}^V(\vecb{c}))_T +( \phi_V \primalph, \divergence(\boldsymbol{B}^V(\vecb{c})))_T\\
         & + ((\bar \sigma_h)_{nt},  (\boldsymbol{B}^V(\vecb{c})_t - \phi_V \vecb{c}_t) )_{\partial T} \\
    = & \sum_{T\in\mathcal{T}_V}(\vecb{f} , \vecb{c} \phi_V)_T - ( \bar \sigma_h ,\nabla  (\vecb{c} \phi_V))_T +( \primalph\divergence( \vecb{c} \phi_V))_T.
  \end{align*}
  Since $\vecb{c} \phi_V$ is an element of the velocity Stokes
  discretisation space $\vecb{\bar V}_h$, the first line of
  \eqref{primalform} then gives
  \begin{align*}
    \vecb{r}^V&((\vecb{c},\vecb{c}_t)) =  \sum_{T\in\mathcal{T}_V} ( \vecb{f}, (\id - \mathcal{R})\vecb{c} \phi_V)_T = 0,
  \end{align*}
  where the last step follows from $\vecb{c} \phi_V \in
  \vecb{P}_{1}(\mathcal{T}) \cap \vecb{H}^1_0(\Omega)$ and
  \eqref{rec_three}. In total this shows that we also have $\vecb{r}^V
  ((\vecb{c},\vecb{c}_t)) = 0$, thus using $(\vecb{v}_h^V, \vecb{\hat
  v}_h^V) = (\vecb{\pi}_\R^V + (\id - \vecb{\pi}_\R^V))(\vecb{ v}_h^V,
  \vecb{\hat v}_h^V)$, definition \eqref{factorspace::equal} and
  \eqref{eq:local_probtwo}, we have proven \eqref{equilibrationlocal}. 
  
  For the second statement let $(\vecb{v}_h,\vecb{\hat v}_h) \in
  \vecb{V}_h \times \vecb{\hat V}_h$, with $\divergence(\vecb{v}_h) =
  0$. Setting $\vecb{v}_h^V = \vecb{v}_h|_{\omega_V}$ and $\vecb{\hat
  v}_h^V = \vecb{\hat v}_h|_{\mathcal{F}_V}$ immediately proves \eqref{equilibrationgloballocal}.
  
  For the proof of the third statement consider the test function
  $\vecb{\hat v}_h^V \in \vecb{\hat V}_h^V$ such that $\vecb{\hat
  v}_h^V = (\sigma_h^V)_{nt}$ on every facet $F \subset \partial
  \omega_V$, and zero on the internal facets. Using equation
  \eqref{equilibrationlocal} with $\vecb{v}_h^V = 0$  then gives (for
  the squared $L^2$-norm on the facets)
  \begin{align*}
    \sum_{F \in \partial \omega_V} ((\sigma_h^V)_{nt},(\sigma_h^V)_{nt})_F
    = \sum_{F \in \partial \omega_V} ( (\sigma_h^V)_{nt} , \vecb{\hat v}_h^V)_F 
    = \sum_{T \in \mathcal{T}_V} ( \phi_V (\bar \sigma_h)_{nt} , \vecb{\hat v}_h^V)_{\partial T} = 0,
  \end{align*}
  where we used that $\phi_V$ vanishes on the boundary $\partial \omega_V$ and $\vecb{\hat v}_h^V $ on internal facets.
\end{proof}

\subsection{Admissibility of the global flux}
After solving the local problems we define the equilibrated flux 
\begin{align}\label{localeqflux}
\sigma_h^\text{LEQ} := \bar \sigma_h - \sigma_h^\Delta \quad \text{with} \quad \sigma_h^\Delta := \sum_{V} \sigma_h^V.
\end{align}
\begin{theorem}
  The locally equilibrated stress $\sigma_h^\text{LEQ}$ is an element
  of $\Sigma_h$ and satisfies the discrete equilibration condition \eqref{eq::relaxed_equi_disc}, i.e.,
\begin{align*}
  (\vecb{f},\vecb{v}_h) + \langle \divergence(\sigma^\text{LEQ}_h), \vecb{v}_h \rangle_{\vecb{V}_h} = 0 \quad \text{for all } \vecb{v}_h \in \vecb{V}_h \textrm{ with } \div(\vecb{v}_h) = 0.
\end{align*}
\end{theorem}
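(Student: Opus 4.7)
The plan is to verify the two assertions in turn. Membership $\sigma_h^\text{LEQ} \in \Sigma_h$ reduces to checking the normal--tangential continuity $\jump{(\sigma_h^\text{LEQ})_{nt}} = 0$ on every interior face, since piecewise polynomiality is immediate and the trace-free constraint follows from $\mathrm{tr}(\sigma_h^V) = 0$ together with $\mathrm{tr}(\bar\sigma_h) = \nu\,\divergence(\primaluh)$ (which vanishes for divergence-free primal discretisations and is otherwise absorbed by the deviatoric part used in the estimator of Theorem~\ref{thm:EQestimator}).

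For the continuity, I would fix an interior face $F$ and, for each vertex $V$, test \eqref{equilibrationlocal} with $\vecb{v}^V_h = 0$ and $\vecb{\hat v}^V_h \in \vecb{\hat V}^V_h$ supported on $F$ alone. The left-hand side collapses to $-(\jump{(\sigma_h^V)_{nt}}, \vecb{\hat v}^V_h)_F$, while collecting the two $\partial T$-contributions in $\vecb{r}^V$ with opposing outward normals reduces the right-hand side to $-(\phi_V\,\jump{(\bar\sigma_h)_{nt}}, \vecb{\hat v}^V_h)_F$. Since $\vecb{\hat v}^V_h$ exhausts the relevant polynomial space on $F$, this yields
\[
    \jump{(\sigma_h^V)_{nt}}|_F = \phi_V|_F \, \jump{(\bar\sigma_h)_{nt}}|_F,
\]
which holds trivially for vertices $V \notin \overline F$ because $\phi_V|_F = 0$ and the third item of Theorem~\ref{localproperties} gives $(\sigma_h^V)_{nt}|_F = 0$ on patch boundaries. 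Summing over $V$ together with $\sum_V \phi_V|_F = 1$ shows $\jump{(\sigma_h^\Delta)_{nt}}|_F = \jump{(\bar\sigma_h)_{nt}}|_F$ and therefore $\jump{(\sigma_h^\text{LEQ})_{nt}}|_F = 0$.

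For the equilibration, fix $\vecb{v}_h \in \vecb{V}_h$ with $\divergence(\vecb{v}_h) = 0$ and any $\vecb{\hat v}_h \in \vecb{\hat V}_h$, and sum \eqref{equilibrationgloballocal} over all vertices. Exchanging the order of summation on the left and invoking the third item of Theorem~\ref{localproperties} to annihilate the patch-boundary contributions rewrites the sum as $\langle\divergence(\sigma_h^\Delta), \vecb{v}_h\rangle_{\vecb{V}_h} - \sum_F (\jump{(\bar\sigma_h)_{nt}}, \vecb{\hat v}_h)_F$, using the jump identity from the previous paragraph. On the right-hand side, representation \eqref{rhsintbyparts} combined with the partition of unity $\sum_V \boldsymbol{B}^V(\vecb{v}_h) = \vecb{v}_h$ from Lemma~\ref{lem::bubbleprops_three} (applicable since $\divergence(\vecb{v}_h)=0$) and $\sum_{V \in T} \phi_V = 1$ collapses the bulk integrals to $(\vecb{f}, \vecb{v}_h) - \sum_T(\bar\sigma_h, \nabla\vecb{v}_h)_T + (\primalph, \divergence(\vecb{v}_h)) + \sum_T((\bar\sigma_h)_{nt}, (\vecb{v}_h - \vecb{\hat v}_h)_t)_{\partial T}$; the pressure term vanishes since $\divergence(\vecb{v}_h) = 0$, and the remaining boundary integral rewrites as $\sum_F \jump{(\bar\sigma_h)_{nt} \cdot (\vecb{v}_h)_t}_F - \sum_F(\jump{(\bar\sigma_h)_{nt}}, \vecb{\hat v}_h)_F$. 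Combining both sides with the elementwise integration-by-parts identity $\langle\divergence(\bar\sigma_h), \vecb{v}_h\rangle_{\vecb{V}_h} = -\sum_T(\bar\sigma_h, \nabla\vecb{v}_h)_T + \sum_F \jump{(\bar\sigma_h)_{nt} \cdot (\vecb{v}_h)_t}_F$ (which uses the normal-continuity of $\vecb{v}_h$ to drop the $\jump{(\bar\sigma_h)_{nn}}(\vecb{v}_h)_n$ contribution), the $\vecb{\hat v}_h$- and $\bar\sigma_h$-dependent terms cancel pairwise, leaving $\langle\divergence(\sigma_h^\text{LEQ}), \vecb{v}_h\rangle_{\vecb{V}_h} = -(\vecb{f}, \vecb{v}_h)$.

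The main obstacle is the face-integral bookkeeping in the vertex sum: which jumps arise from extending each $\sigma_h^V$ by zero outside its patch, and how the Lagrange-multiplier pieces involving $\vecb{\hat v}_h$ match across local problems. The third item of Theorem~\ref{localproperties} is the decisive ingredient that makes extension-by-zero compatible with the global normal-tangential continuity, while the nested partition-of-unity properties of $\phi_V$ and $\boldsymbol{B}^V$, together with the reconstruction assumption \eqref{rec_three} (used implicitly inside $\vecb{r}^V$), are what permit the bulk contributions to collapse onto the primal Stokes residual.
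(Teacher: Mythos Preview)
Your proposal is correct and follows essentially the same route as the paper. Both parts rely on the same ingredients: Theorem~\ref{localproperties} (in particular the vanishing normal--tangential boundary trace and the local equilibrium identity), the partition-of-unity properties of $\phi_V$ and $\boldsymbol{B}^V$ from Lemma~\ref{lem::bubbleprops}, and the elementwise integration-by-parts representation \eqref{rhsintbyparts}.

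Two minor remarks. For the normal--tangential continuity, you derive the pointwise face identity $\jump{(\sigma_h^V)_{nt}}|_F = \phi_V|_F\,\jump{(\bar\sigma_h)_{nt}}|_F$ and then sum; the paper instead works with a global test function $\phi_V\vecb{\hat v}_h$ and argues via integrals, but the content is identical. For the equilibration, the paper simply takes $\vecb{\hat v}_h = 0$ in \eqref{equilibrationgloballocal} from the outset, which eliminates all the $\vecb{\hat v}_h$-bookkeeping you carry through only to have it cancel at the end; your argument is correct, just longer than necessary. Your aside about the trace-free condition of $\Sigma_h$ when $\divergence(\primaluh)\neq 0$ is a valid observation that the paper passes over, and you are right that it is harmless for the estimator since only $\mathrm{dev}(\sigma_h-\bar\sigma_h)$ enters.
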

\begin{proof} To show $\sigma_h^\text{LEQ} \in
\Sigma_h$ it suffices to show that its normal-tangential jumps vanish.
Indeed, for any arbitrary $\vecb{\hat v}_h \in \vecb{ \hat V}_h$, it holds,
  \begin{align*}
    \sum_{F \in \mathcal{F}} ( \jump{(\sigma_h^\text{LEQ})_{nt}} , (\vecb{\hat v}_h)_t)_F 
    &= \sum\limits_{V \in \mathcal{V}} \sum\limits_{F \in \mathcal{F}_V} ( \jump{(\sigma_h^\text{LEQ})_{nt}} , ( \phi_V \vecb{\hat v}_h)_t )_F \\
    &= \sum\limits_{V \in \mathcal{V}} \sum\limits_{F \in \mathcal{F}_V}  ( \jump{(\bar \sigma_h)_{nt}}, (\phi_V \vecb{\hat v}_h)_t)_F-  (\jump{(\sigma_h^\Delta)_{nt}}, (\phi_V\vecb{\hat v}_h)_t)_F.
  \end{align*}
  Here, we only used that the $\lbrace \phi_V \rbrace_{V \in \mathcal{V}}$ form a partition of unity. Applying the third and then
  the second statement of Theorem~\ref{localproperties} (with
  $\vecb{v}_h = 0$), the sum over the last integral can be written as
  \begin{align*}
    \sum\limits_{V \in \mathcal{V}} \sum\limits_{F \in \mathcal{F}_V}( \jump{(\sigma_h^\Delta)_{nt}}, (\phi_V\vecb{\hat v}_h)_t)_F &=  \sum\limits_{V \in \mathcal{V}} \sum\limits_{F \in \mathcal{F}_V}  ( \jump{(\sigma_h^V)_{nt}}, (\phi_V\vecb{\hat v}_h)_t)_F \\
    &=  \sum\limits_{V \in \mathcal{V}} \sum\limits_{F \in \mathcal{F}_V}  ( \jump{(\bar \sigma_h)_{nt}}, (\phi_V \vecb{\hat v}_h)_t )_F,
  \end{align*}
  and thus $ \sum_{F \in \mathcal{F}} (
    \jump{(\sigma_h^\text{LEQ})_{nt}} ,(\vecb{\hat v}_h)_t)_F
    =0$.
  With the choice $(\vecb{\hat v}_h)_t = \jump{(\sigma_h^\text{LEQ})_{nt}}$, we conclude that $\jump{(\sigma_h^\text{LEQ})_{nt}} = 0$ point wise, and so $\sigma_h^\text{LEQ} \in \Sigma_h$.

To show the equilibration constraint, consider an arbitrary
  $\vecb{v}_h \in \vecb{V}_h$ with $\divergence(\vecb{v}_h) = 0$.
  Since $\sigma_h^\text{LEQ} \in \Sigma_h$, the definition of the
  global and local distributional divergences and \eqref{localeqflux}
  gives
\begin{align*}
  \langle \divergence(\sigma^\text{LEQ}_h), \vecb{v}_h \rangle_{\vecb{V}_h} 
  &= \langle \divergence(\bar \sigma_h), \vecb{v}_h \rangle_{\vecb{V}_h}  - \sum\limits_{V\in \mathcal{V}} \langle \divergence( \sigma^V_h), (\vecb{v}_h,\vecb{ 0}) \rangle_{\vecb{V}^V_h} \\
  & = \langle \divergence(\bar \sigma_h), \vecb{v}_h \rangle_{\vecb{V}_h} - \sum_{V \in \mathcal{V}} \vecb{r}^V((\vecb{ v}_h,\vecb{0}))  = -(\vecb{f}, \vecb{v}_h ).
\end{align*}
The last identity follows with an integration by parts, item
        \ref{lem::bubbleprops_three} of Lemma \ref{lem::bubbleprops}
        and $\divergence(\vecb{v}_h) = 0$. Altogether this shows the
        claimed discrete equilibration condition.
\end{proof}

\section{Local Efficiency}\label{sec:efficiency}

This section proves efficiency of the proposed local equilibrated
fluxes in the sense that the error estimator is a lower bound for the
velocity error plus norms that only depend on the velocity and have
the right order and data oscillations. In particular also the
efficiency bound is pressure-independent.

\begin{theorem}[Efficiency of the local
  design]\label{th::localprop}  
  Assume that the exact solution fulfills the regularity $ \vecb{u}
  \in \vecb{H}^m(\mathcal{T}) \cap V_0$, for some $m \ge 2$. The error
  estimator for \(\sigma_h := \sigma_h^\text{LEQ}\) is efficient in
  the sense that
  \begin{align*}
    \eta(\sigma_h ^\text{LEQ}) \lesssim \nu^{-1} \Big(\| \sigma - \bar \sigma_h \|
     + \sum\limits_{V \in \mathcal{V} } \| \sigma_ h^V \|_{\Sigma_h^V} +  \mathrm{osc}_{k-2}(\curl(\vecb{f})) \Big),
  \end{align*} 
  where $\sigma_h^V \in \Sigma_h^V$ are the local solutions of
\eqref{eq:local_prob} and the oscillations as in
Theorem~\ref{th::globaleff}. Further there holds for all local
solutions the pressure-robust local efficiency
  \begin{align*}
    \| \sigma_ h^V \|^2_{\Sigma^V_h}  &\lesssim \sum_{T \in \mathcal{T}_V}\|\sigma -\bar \sigma_h\|^2_{T} + h_T \| (\sigma - \bar \sigma_h)_{nt}\|^2_{\partial T} + h_T^2\|(\id - \vecb{\pi}_{r-2}) \nu\Delta \vecb{u}\|^2_{T}.
  \end{align*}
 If the reconstruction operator $\mathcal{R}$ of
the primal method \eqref{primalform} is the identity, the last term of
the right hand side vanishes.
\end{theorem}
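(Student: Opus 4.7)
The statement has two parts: the global efficiency bound and the local efficiency of each vertex-patch flux. I address them in this order.

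The global estimate is essentially bookkeeping starting from the definition of $\eta$ in Theorem~\ref{thm:EQestimator}. Since $\sigma_h^{\text{LEQ}} - \bar\sigma_h = -\sum_V \sigma_h^V$ by \eqref{localeqflux} and every simplex belongs to exactly $d+1$ vertex patches, the triangle and Cauchy--Schwarz inequalities give
\[
\sum_T \|\mathrm{dev}(\sigma_h^{\text{LEQ}}-\bar\sigma_h)\|_T^2 \lesssim \sum_V \|\sigma_h^V\|_{\Sigma^V_h}^2.
\]
The divergence penalty is dominated via $c_0^{-1}\|\div\primaluh\| = c_0^{-1}\|\div(\primaluh-\vecb{u})\| \lesssim \nu^{-1}\|\sigma - \bar\sigma_h\|$, and the curl oscillation term appears verbatim.

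For the local estimate, Theorem~\ref{theorem::stability} reduces the task to bounding the residual $\vecb{r}^V((\vecb{v}_h^V,\vecb{\hat v}_h^V))$ divided by $\|(\vecb{v}_h^V,\vecb{\hat v}_h^V)\|_{\vecb{X}^V_h}$ over divergence-free test pairs. Set $\vecb{w} := \boldsymbol{B}^V(\vecb{v}_h^V)$; by Lemma~\ref{lem::bubbleprops} (items \ref{lem::bubbleprops_one}, \ref{lem::bubbleprops_two}) it lies in $\mathrm{BDM}_k(\mathcal{T}_V)\cap H_0(\div,\omega_V)$. Substituting the Stokes PDE $\vecb{f} = -\nu\Delta\vecb{u}+\nabla p$ into the integrated-by-parts form \eqref{rhsintbyparts} and performing element-wise integration by parts, the normal boundary contributions $(\sigma_{nn}-p)\vecb{w}_n$ sum to zero (single-valuedness of $\sigma - p\,\identity$ and vanishing of $\vecb{w}_n$ on $\partial\omega_V$), while the interior terms $(\sigma_{nt},(\phi_V\vecb{\hat v}_h^V)_t)$ cancel by continuity of $\sigma$ combined with $\phi_V|_{\partial\omega_V}=0$. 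What remains is
\[
\vecb{r}^V = \sum_T (\sigma-\bar\sigma_h, \nabla\vecb{w})_T + ((\bar\sigma_h-\sigma)_{nt}, (\vecb{w}-\phi_V\vecb{\hat v}_h^V)_t)_{\partial T} + (\primalph-p, \div\vecb{w})_T.
\]
The first two contributions give the $\|\sigma-\bar\sigma_h\|_T$ and $h_T^{1/2}\|(\sigma-\bar\sigma_h)_{nt}\|_{\partial T}$ parts of the claimed bound via Cauchy--Schwarz together with scaling estimates of the form $\|\nabla\vecb{w}\|_T,\,h_T^{-1/2}\|\vecb{w}\|_{\partial T}\lesssim \|\nabla\vecb{v}_h^V\|_{\omega_V}$, which follow from the nodal-interpolation definition of $\boldsymbol{B}^V$, elementwise inverse estimates, and the quotient-space Poincar\'e inequality on $\vecb{X}^V_h$.

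The pressure contribution $\sum_T(\primalph-p,\div\vecb{w})_T$ is the main obstacle, and the reason the divergence-free case $\mathcal{R}=\id$ yields a cleaner bound. Because $\div\vecb{w}\neq 0$ in general, pressure-dependence cannot be killed by the constraint $\div\vecb{v}_h^V=0$ alone. The plan is to trade this pairing against the discrete primal equation \eqref{primalform}: property \eqref{rec_one} is equivalent to the orthogonality $\primalvh - \mathcal{R}(\primalvh) \perp \vecb{P}_{r-2}(\mathcal{T})$ for every $\primalvh\in\primalVh$, and combined with \eqref{rec_two}--\eqref{rec_two_b} any gradient force $\nabla p$ paired with $\primalvh - \mathcal{R}(\primalvh)$ reduces via integration by parts to a divergence pairing that is killed by $\primalQh$-orthogonality. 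The surviving consistency defect is precisely $h_T\|(\id - \vecb{\pi}_{r-2})\nu\Delta\vecb{u}\|_T$, the non-gradient component of the data oscillation $(\id-\vecb{\pi}_{r-2})\vecb{f}$ under the decomposition $\vecb{f} = -\nu\Delta\vecb{u}+\nabla p$. When $\mathcal{R}=\id$, property \eqref{rec_one} is trivially satisfied and this defect vanishes, matching the final claim of the theorem.
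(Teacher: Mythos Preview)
Your global estimate and the decomposition of $\vecb{r}^V$ into the three contributions
\[
(\sigma-\bar\sigma_h,\nabla\vecb{w})_T,\quad
((\bar\sigma_h-\sigma)_{nt},(\vecb{w}-\phi_V\vecb{\hat v}_h^V)_t)_{\partial T},\quad
(\primalph-p,\divergence\vecb{w})_T
\]
match the paper exactly, as does the treatment of the first two terms via Cauchy--Schwarz and scaling of the bubble projector.

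The gap is in the pressure term. You write that the plan is to ``trade this pairing against the discrete primal equation'', but you never say \emph{how}: the function $\vecb{w}=\boldsymbol{B}^V(\vecb{v}_h^V)$ lies only in $\mathrm{BDM}_k(\mathcal{T}_V)$, not in $\primalVh$, so it is not an admissible test function for \eqref{primalform}. The manipulations you describe with $\primalvh-\mathcal{R}(\primalvh)$ and properties \eqref{rec_one}--\eqref{rec_two_b} all presuppose a test function $\primalvh\in\primalVh$, and you have not produced one.

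The paper closes this gap with a \emph{local inf-sup argument}. First, since $\divergence\vecb{w}\in\primalQh|_{\omega_V}$, one replaces $p$ by $\pi^{\primalQh}p$ and subtracts the patch mean $c_p$ of $\pi^{\primalQh}p-\primalph$. Then the local inf-sup stability of the pair $(\primalVh\cap H^1_0(\omega_V),\primalQh|_{\omega_V})$ yields
\[
\|\pi^{\primalQh}p-\primalph-c_p\|_{\omega_V}
\lesssim \sup_{\primalvh\in\primalVh\cap H^1_0(\omega_V)}
\frac{(\pi^{\primalQh}p-\primalph-c_p,\divergence\primalvh)_{\omega_V}}{\|\nabla\primalvh\|_{\omega_V}}.
\]
Only now can the discrete equation \eqref{primalform} be invoked with the genuine primal test function $\primalvh$, and your observations about \eqref{rec_one}--\eqref{rec_two_b} apply to show
\[
(\pi^{\primalQh}p-\primalph-c_p,\divergence\primalvh)_{\omega_V}
=-(\nu\Delta\vecb{u},\mathcal{R}(\primalvh)-\primalvh)_{\omega_V}
+(\sigma-\bar\sigma_h,\nabla\primalvh)_{\omega_V},
\]
from which the claimed bound follows. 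Without this inf-sup bridge your argument does not connect $(\primalph-p,\divergence\vecb{w})$ to the primal scheme, and the pressure-robustness of the local bound is not established.
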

\begin{proof}
  The first statement follows with exactly the same steps as in the
  proof Theorem~\ref{th::globaleff}, equation \eqref{localeqflux} and the
  triangle inequality 
  \begin{align*}
    \| \sigma - \sigma_h^\text{LEQ} \| \le \| \sigma - \bar \sigma_h \| + \| \sum\limits_{V \in \mathcal{V} } \sigma_ h^V \| \le \| \sigma - \bar \sigma_h \| +  \sum\limits_{V \in \mathcal{V} } \| \sigma_ h^V \|_{\Sigma_h^V}.
  \end{align*}
  We continue with the proof of the local pressure-robust stability
  estimate. For this let $(\sigma_h^V,(\vecb{u}_h^V,\vecb{\hat
  u}_h^V), p_h^V) \in \Sigma^V_h \times \boldsymbol{X}^V_h \times
  Q^V_h$ be the solution of \eqref{eq:local_prob}. By the stability
  estimate of Theorem \ref{theorem::stability} we have
\begin{align*}
      \| \sigma_ h^V \|_{\Sigma^V_h} &\lesssim \sup\limits_{\substack{(\vecb{v}_h^V,\vecb{\hat v}_h^V) \in \boldsymbol{X}_h^V \\ \divergence(\vecb{ v}^V_h) = 0 }} \frac{ \vecb{r}^V((\vecb{ v}^V_h,\vecb{\hat v}_h^V)) }{ \| (\vecb{ v}^V_h,\vecb{\hat v}^V_h) \|_{\vecb{X}^V_h}}.
    \end{align*}
    Let $(\vecb{v}_h^V,\vecb{\hat v}_h^V) \in \boldsymbol{X}_h^V$ with
     $\divergence (\vecb{v}_h) = 0$ be arbitrary. With $\vecb{f} = -
     \Delta \vecb{u} + \nabla p$ and applying integration by parts
     (similar to \eqref{rhsintbyparts}), the numerator simplifies to
  \begin{align}
    \vecb{r}^V((\vecb{ v}^V_h,\vecb{\hat v}_h^V))
    &= \sum_{T\in\mathcal{T}_V} (\sigma - \bar \sigma_h, \nabla \boldsymbol{B}^V(\vecb{ v}^V_h))_T \label{eqn:locefficiency_proof_firstterm}\\
    &-\sum_{T\in\mathcal{T}_V}  ((\sigma - \bar \sigma_h)_{nt}, (\boldsymbol{B}^V(\vecb{ v}_h^V) -\phi_V \vecb{\hat v}_h^V)_t)_{\partial T} \label{eqn:locefficiency_proof_secondterm} \\
                               &-\sum_{T\in\mathcal{T}_V}  (p - \primalph, \divergence (\boldsymbol{B}^V(\vecb{ v}^V_h)))_T,\label{eqn:locefficiency_proof_thirdterm}
  \end{align}
  where we used that $\boldsymbol{B}^V(\vecb{ v}^V_h)\cdot
  \vecb{n} = 0$ on $\partial \omega_V$ (see item
  \ref{lem::bubbleprops_one} of Lemma \ref{lem::bubbleprops}) and that
  \begin{align*}
    \sum_{T\in\mathcal{T}_V} (\phi_V (\sigma)_{nt}, (\vecb{\hat v}_h^V)_t)_{\partial T}  =     \sum_{F \in\mathcal{F}_V} (\jump{\phi_V(\sigma)_{nt}}, (\vecb{\hat v}_h^V)_t)_F  = 0.
  \end{align*}
  By the continuity of the bubble projector $\boldsymbol{B}^V$ and
  that $\phi_V = \mathcal{O}(1)$ on $\omega_V$, the Cauchy--Schwarz
  inequality applied to the sums in
  \eqref{eqn:locefficiency_proof_firstterm} and
  \eqref{eqn:locefficiency_proof_secondterm} gives
  \begin{align*}
    &\sum_{T\in\mathcal{T}_V} (\sigma - \bar \sigma_h, \nabla \boldsymbol{B}^V(\vecb{ v}^V_h))_T -((\sigma - \bar \sigma_h)_{nt}, (\boldsymbol{B}^V(\vecb{ v}_h^V) - \phi_V\vecb{\hat v}_h^V)_t)_{\partial T}\\
    &\le \sum_{T\in\mathcal{T}_V} \|(\sigma - \bar \sigma_h)\|_T \| \nabla \vecb{ v}^V_h\|_T + h_T \| (\sigma - \bar \sigma_h)_{nt} \|_{\partial T} \frac{1}{h_T} \| (\vecb{ v}_h^V -\vecb{\hat v}_h^V)_t \|_{\partial T} \\
    &\le \del{\sum_{T\in\mathcal{T}_V} \|(\sigma - \bar \sigma_h)\|^2_T + h_T^2 \| (\sigma - \bar \sigma_h)_{nt} \|^2_{\partial T}}^{1/2} \| (\vecb{ v}_h^V,\vecb{\hat v}_h^V)\|_{\vecb{X}_h^V}.
  \end{align*}
 We continue with the remaining third sum in
 \eqref{eqn:locefficiency_proof_thirdterm} (which does not vanish,
 although \(\vecb{ v}_h^V\) is divergence-free). For this let
 $\pi^{\primalQh} p$ be the $L^2$ best-approximation of the exact pressure
 in the pressure space $\primalQh$ and define the mean value
  \begin{align*}
    c_p := \frac{1}{|\mathcal{T}_V|} (\pi^{\primalQh}p - \primalph, 1)_{\omega_V}.
  \end{align*}
  According to item
  \ref{lem::bubbleprops_two} of Lemma \ref{lem::bubbleprops}, we have
  that $\boldsymbol{B}^V(\vecb{v}_h^V) \in
  \mathrm{BDM}_k(\mathcal{T}_V)$ and thus $\divergence(\boldsymbol{B}^V(\vecb{ v}_h^V)) \in
  \primalQh$, which gives 
  \begin{align*}
    \sum_{T\in\mathcal{T}_V}  (p - \primalph, \divergence (\boldsymbol{B}^V(\vecb{ v}_h^V)))_T &= \sum_{T\in\mathcal{T}_V}  (\pi^{\primalQh} p - \primalph - c_p, \divergence (\boldsymbol{B}^V(\vecb{ v}_h^V)))_T \\
    &\lesssim \| \pi^{\primalQh} p - \primalph - c_p\|_{\omega_V} \| (\vecb{ v}_h^V,\vecb{\hat v}_h^V)\|_{\vecb{X}_h^V},
  \end{align*}
  where we again used the continuity of $\boldsymbol{B}^V$. By the
  inf-sup condition of the primal Stokes dicretisation
  ($\pi^{\primalQh} p - \primalph - c_p$ has a zero mean value on
  $\omega_V$) on the local space \( \vecb{\bar V}_h \cap H^1_0(\omega_V)\) we have
  \begin{align*}
 \| \pi^{\primalQh} p- \primalph - c_p \|_{\omega_V} \lesssim \sup\limits_{\primalvh \in \vecb{\bar V}_h \cap H^1_0(\omega_V)} \frac{(\pi^{\primalQh} p - \primalph - c_p, \divergence(\primalvh))_{\omega_V} }{\| \nabla \primalvh \|_{\omega_V}}.
  \end{align*}
  Now, using that $\primalph$ is the discrete pressure solution we get
  \begin{align*}
    - (\primalph, \divergence(\primalvh))_{\omega_V} 
    &= (\vecb{f}, \mathcal{R}(\primalvh))_{\omega_V} - (\nu \nabla \primaluh, \nabla \primalvh)_{\omega_V} \\
    &= (-\nu \Delta \vecb{u} + \nabla p, \mathcal{R}(\primalvh))_{\omega_V} - (\nu \nabla \primaluh, \nabla \primalvh)_{\omega_V}.
  \end{align*}  
  Since $\divergence(\mathcal{R}(\primalvh)) \in \primalQh$, see \eqref{rec_two}, we get using integration by parts
  \begin{align*}
    (\nabla p, \mathcal{R}(\primalvh))_{\omega_V} 
    = - (p , \divergence(\mathcal{R}(\primalvh)))_{\omega_V}
    &=  - (\pi^{\primalQh} p, \divergence(\mathcal{R}(\primalvh)))_{\omega_V}\\
    &= - (\pi^{\primalQh} p, \divergence(\primalvh))_{\omega_V},
  \end{align*}
  and so in total (since $ (c_p, \divergence(\primalvh))_{\omega_V} =0$ by Gauss's theorem)
  \begin{align*} 
    (\pi^{\primalQh} p - \primalph - c_p, \divergence(\primalvh))_{\omega_V}
    &= -(\nu \Delta \vecb{u}, \mathcal{R}(\primalvh))_{\omega_V} - (\bar \sigma_h, \nabla \primalvh)_{\omega_V}\\
    &= -(\nu \Delta \vecb{u},\mathcal{R}(\primalvh) - \primalvh)_{\omega_V} + (\sigma - \bar \sigma_h, \nabla \primalvh)_{\omega_V},
  \end{align*}
  where we added and subtracted (including integration by parts)
  $(\sigma, \nabla \primalvh)_{\omega_V}$. By the properties of the
  reconstruction operator, the first integral can be bounded by 
  \begin{align} \label{eq::effrec}
    (-\nu \Delta \vecb{u}, \mathcal{R}(\primalvh) - \primalvh)_{\omega_V}  \lesssim \| (\id - \vecb{\pi}^{r-2}_{\omega_V})\nu  \Delta \vecb{u} \|_{\omega_V} h_V \| \nabla \primalvh \|_{\omega_V}, 
  \end{align}
  where $h_V$ denotes the diameter of the vertex patch $\omega_V$. Thus by the Cauchy Schwarz inequality we get the estimate
  \begin{align*}
    \| \pi^{\primalQh} p - \primalph - c_p \|_{\omega_V} \lesssim h_V\| (\id - \vecb{\pi}^{r-2}_{\omega_V})\nu  \Delta \vecb{u} \|_{\omega_V} + \del{ \sum_{T\in\mathcal{T}_V} \|(\sigma - \bar \sigma_h)\|^2_T}^{1/2},
  \end{align*}
  and so
  \begin{align*}
    \vecb{r}^V((\vecb{ v}^V_h,\vecb{\hat v}_h^V)) &\lesssim  \del{\sum_{T\in\mathcal{T}_V} \|(\sigma - \bar \sigma_h)\|^2_T + h_T^2 \| (\sigma - \bar \sigma_h)_{nt} \|^2_{\partial T}}^{1/2} \| (\vecb{v}_h^V,\vecb{\hat v}_h^V)\|_{\vecb{X}_h^V}  \\
    &+ h_V \| (\id - \vecb{\pi}^{r-2}_{\omega_V})\nu  \Delta \vecb{u} \|_{\omega_V}  \| (\vecb{ v}_h^V,\vecb{\hat v}_h^V)\|_{\vecb{X}_h^V}.
  \end{align*}
  This concludes the proof for the general case. Now assume that
  $\mathcal{R} = \id$, then we see that the additional term in
  \eqref{eq::effrec} vanishes which proves the stated result in the
  case where no reconstruction operator in the primal method
  \eqref{primalform} is included.
\end{proof}

\section{Numerical Examples}\label{sec:numerics} 
This section confirms
the theoretical results by some numerical examples. For the ease of
representation we introduce the following notation. 
The pressure-robust estimator of Theorem \ref{thm:EQestimator} is
denoted by $\eta$. Here, the flux $\eqflux$ either corresponds to the
solution $\eqflux^\text{GEQ}$ of the global problem \eqref{global:MCS}
or to the local equilibrated flux $\eqflux^\text{LEQ}$ given by
equation \eqref{localeqflux}. Further, we track the error estimator contributions
  \begin{align*}
    \eta_{\bold f}(\eqflux) &:= \nu^{-1}\| h_T^2(\id - \pi_{k-2})\operatorname{curl}(\bold f)\|,\\
    \eta_{\sigma}(\eqflux) &:= \nu^{-1}\|\operatorname{dev}(\eqflux - \bar \sigma_h)\|, \\
    \eta_{\divergence} &:= c_0^{-1}\|\divergence( \primaluh)\|. 
  \end{align*}
  Recall that Table \ref{infsuppairs} shows the different inf-sup
  stable velocity pressure pairs that we consider for the primal
  formulation \eqref{primalform}. 
  The order \(k = r\) corresponds to the order of the space
  \(\vecb{V}_h = \mathrm{RT}_{k}\), i.e. the order of the spaces used in
  the equilibration designs \eqref{global:MCS} and
  \eqref{localeqflux}. 
The adaptive mesh refinement loop is defined as usual by 
\begin{align*}
   \mathrm{SOLVE} \rightarrow \mathrm{ESTIMATE} \rightarrow \mathrm{MARK} \rightarrow \mathrm{REFINE} \rightarrow \mathrm{SOLVE} \rightarrow \ldots
\end{align*}
and employs the local contributions to the error estimator as
element-wise refinement indicators. In the marking step, an element \(T \in \mathcal{T}\) is marked for refinement if \(\eta(T) \geq \frac{1}{4} \max\limits_{K \in \mathcal{T}} \eta(K)\). The refinement step refines all marked elements plus further elements in a closure step to guarantee a regular triangulation.

In the case of the Scott--Vogelius (SV) finite element approximation, the adaptive algorithm includes two meshes: the macro element mesh $\mathcal{T}$ given by a standard triangulation, and the corresponding barycentrically refined triangulation (guaranteeing inf-sup stability of the SV element) denoted by $\mathcal{T}_{\textrm{bar}}(\mathcal{T})$. Again, an element \(T \in \mathcal{T}\) is marked if (mean value of the elements included in one macro element)
\begin{align*}
  \frac{1}{3} \sum\limits_{\substack{T' \in \mathcal{T}_{\textrm{bar}}\\ T' \cap T \neq \emptyset }} \mu(T') \ge \frac{1}{4} \max\limits_{K \in \mathcal{T}_{\textrm{bar}}} \eta(K).
\end{align*}
The refinement of $\mathcal{T}$ is done as described before. The final
mesh is then obtained by a global barycentric refinement step. Note
that although the macro element meshes are nested, their barycentric
refinements are in general not nested.    

 The implementation and numerical examples where performed with the finite element library NGSolve/Netgen \cite{ngsolve,netgen}, see also \url{www.ngsolve.org}.

  \begin{figure}
  \begin{center}
  \includegraphics[]{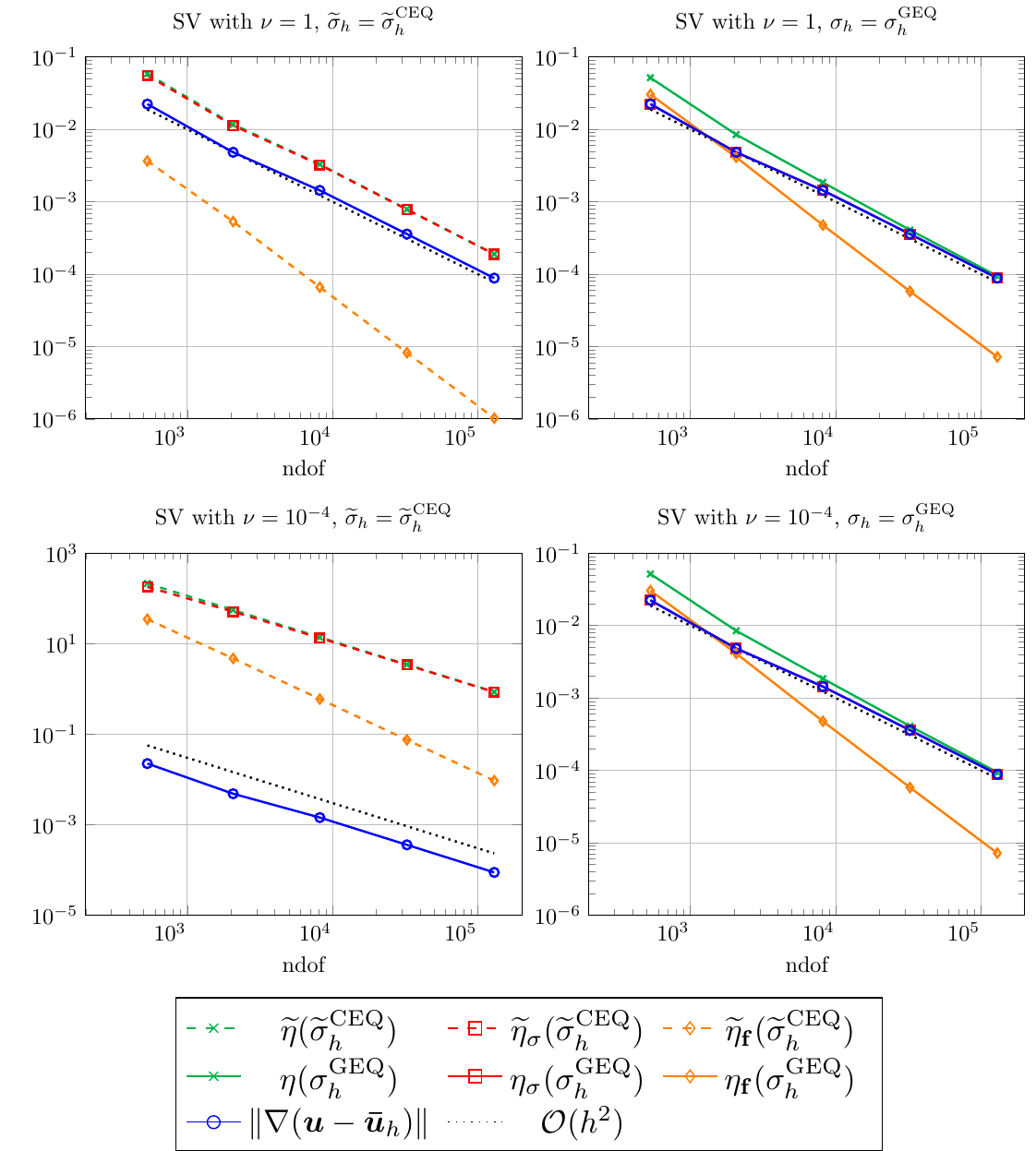}
  \vspace{5pt}
  \caption{Example from Section~\ref{curltwodimexample}: Convergence history of exact error and error
  estimator quantities on uniformly refined meshes for SV with
  \(\nu=1\) (top) and \(10^{-4}\) (bottom) and \(\sigma =
  \pseudostress_h^\text{CEQ}\) (left) and \(\sigma =
  \sigma_h^\text{GEQ}\) (right).} \label{fig:exone}
  \end{center}
\end{figure}

\begin{figure}
  \begin{center}
  \includegraphics[]{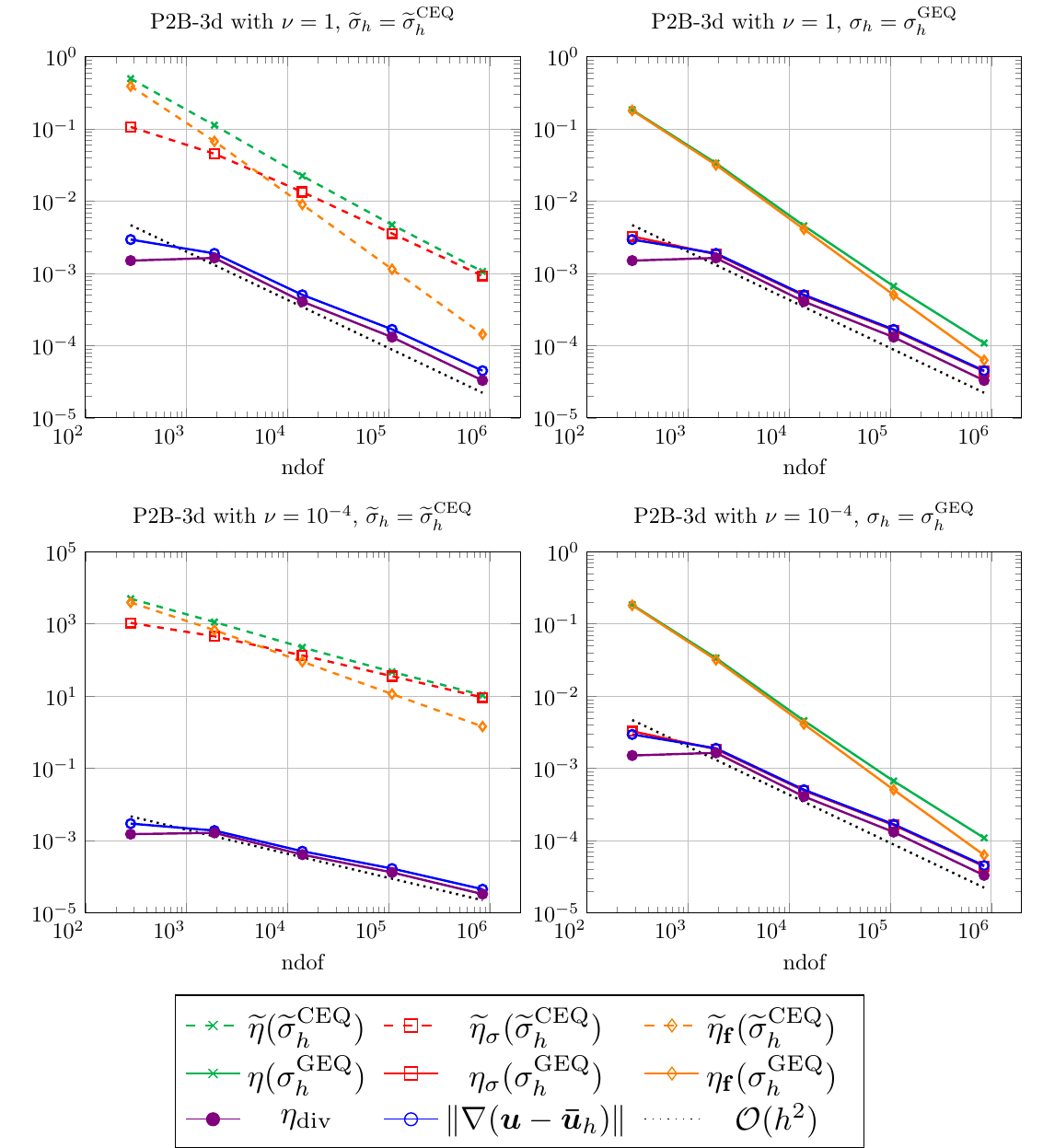}
  \vspace{5pt}
  \caption{Example from Section~\ref{curlthreedimexample}: Convergence history of exact error and error estimator quantities on uniformly refined meshes for P2B-3d with \(\nu=1\) (top) and \(10^{-4}\) (bottom) and \(\sigma = \pseudostress_h^\text{CEQ}\) (left) and \(\sigma = \sigma_h^\text{GEQ}\) (right).} \label{fig:exonethreed}
  \end{center}
\end{figure}

\begin{table}[h]
\begin{center}
\normalsize
\begin{tabular}{l@{~~}l@{~~}|@{~}c@{~}@{~}c@{~}@{~}c@{~}@{~}c@{~}@{~}c@{~}}
   & ref. level & 0 & 1 & 2 & 3 & 4\\
  \toprule 
  $\nu = 1$ & $\pseudostress_h = \pseudostress_h^\text{CEQ}$ & \num{2.615675242829881}& \num{2.4301510315073873}& \num{2.2889065097839194}& \num{2.198345006213172}& \num{2.147047729774112}\\
  $\nu = 1$ & $\sigma_h = \sigma_h^\text{GEQ}$ & \num{2.3043254083875664}& \num{1.7540568816450133}& \num{1.2856609599485287}& \num{1.1376152118271434}& \num{1.0683395985651158} \\
$\nu = 1$ & $\sigma_h = \sigma_h^\text{LEQ}$& \num{3.0835389922649927}& \num{2.4781802933350163}& \num{2.0063464495825576}& \num{1.8061490003959697}& \num{1.696949200054742}\\
  $\nu = 10^{-4}$ & $\pseudostress_h = \pseudostress_h^\text{CEQ}$ & \num{9526.38448060565}& \num{11515.790425222802}& \num{9660.55411067006}& \num{9633.060781469472}& \num{9750.079693089981}\\
  $\nu = 10^{-4}$ & $\sigma_h = \sigma_h^\text{GEQ}$ & \num{2.304325408365259}& \num{1.7540568815675341}& \num{1.2856609598912296}& \num{1.1376152117142277}& \num{1.0683395984819313}\\
  $\nu = 10^{-4}$ & $\sigma_h = \sigma_h^\text{LEQ}$ & \num{3.0835389920643097}& \num{2.4781802935858224}& \num{2.006346447176613}& \num{1.8061489934904533}& \num{1.6969491803509396}\\  
  \bottomrule
\end{tabular}
\caption{Efficiency indices in the Example from Section~\ref{curltwodimexample} on uniformly refined meshes and the SV element.}\label{exone::eff}
\end{center}
\end{table}
  
\subsection{Smooth example on unit square} \label{curltwodimexample}

First, we revisit the smooth example from
Section~\ref{sec::numericsclassical}. Figure \ref{fig:exone} presents
the convergence history of the error of the discrete Stokes solution
$\bar{\vecb{u}}_h$ measured in the $H^1$-semi norm using the SV
element with two different viscosities $\nu = 1$ (top) and $\nu =
10^{-4}$ (bottom) on uniformly refined meshes. The first important
observation is that the error plot for the pressure-robust error
estimator \(\eta(\sigma_h^\text{GEQ})\) looks exactly the same for \(\nu =
1\) and \(\nu = 10^{-4}\), while the classical estimator
\(\widetilde \eta(\pseudostress_h^\text{CEQ})\) is nowhere close to the exact error of
the pressure-robust Scott--Vogelius solution for \(\nu = 10^{-4}\)
as already observed in Section~\ref{sec::numericsclassical}. As
expected, the error estimator scales with \(\nu^{-1}\) and so does its
efficiency index.

Table~\ref{exone::eff} lists the efficiency indices on the different
refinement levels also for the pressure-robust local variant of our
error estimator. One can see that the error estimator for
\(\eta(\sigma_h^\text{GEQ})\) even is asymptotically exact, while the local
variant is not, but still attains very good efficiency indices around
\(2\). We want to mention again that the novel error bounds, unfortunately,
contain unknown constants \(c_1\) and \(c_2\) which were evaluated by
\(c_1 = c_2 = 1\). However, they only appear in front of
\(\eta_{\vecb{f}}\) which is a higher order term (see Figure~\ref{fig:exone} again).


\subsection{Smooth example on unit cube} \label{curlthreedimexample}
The second example extends the previous example onto the
unit cube \(\Omega = (0,1)^3\) by prescribing the solution
\begin{align*}
  \vecb{u}(x,y) := \curl \left(\xi,\xi,\xi\right) \quad \text{and} \quad  p(x,y) := x^5 + y^5 + z^5 - 1/2
\end{align*}
with the potential $\xi = x^2(1-x)^2y^2(1-y)^2z^2(1-z)^2$ and with
matching right-hand side \(\vecb{f} := - \nu \Delta \vecb{u} + \nabla
p\) for variable viscosity \(\nu\).

Figure \ref{fig:exonethreed} presents the convergence history of the
error of the discrete Stokes solution $\bar{\vecb{u}}_h$ measured in
the $H^1$-semi norm using the P2B-3d element with two different
viscosities $\nu = 1$ (top) and $\nu = 10^{-4}$ (bottom) on uniformly
refined meshes. The observations are similar to the ones in the two
dimensional case which validates our results also for the case $d =
3$. Since the right-hand side $\vecb{f}$ is a
polynomial of higher order compared to the two dimensional example,
the oscillation terms $\eta_{\vecb{f}}(\eqflux^\text{GEQ}), \widetilde \eta_{\vecb{f}}(\pseudostress_h^\text{CEQ})$ are much
larger and hence more dominating on coarser levels. 

\subsection{L-shaped domain example} \label{lshapeexample} The final
example from \cite{MR993474} is defined on the L-shaped domain
\(\Omega := (-1,1)^2 \setminus \left((0,1) \times (-1,0)\right)\). The
velocity $\vecb{u}$ and pressure $p_0$ now satisfy \(-\nu \Delta
\vecb{u} + \nabla p_0 = 0\), and read as (given in polar coordinates
with radius $R$ and angle $\varphi$)
\begin{align*}
 \vecb{u}(R,\varphi)
& :=R^\alpha
\begin{pmatrix}
(\alpha+1)\sin(\varphi)\psi(\varphi) + \cos(\varphi)\psi^\prime(\varphi)
\\
-(\alpha+1)\cos(\varphi)\psi(\varphi) + \sin(\varphi)\psi^\prime(\varphi)
\end{pmatrix}^T,\\
 p_0 &:= \nu R^{(\alpha-1)}((1+\alpha)^2 \psi^\prime(\varphi)+\psi^{\prime\prime\prime}(\varphi))/(1-\alpha)
\end{align*}
with
\begin{multline*}
\psi(\varphi) :=
1/(\alpha+1) \, \sin((\alpha+1)\varphi)\cos(\alpha\omega) - \cos((\alpha+1)\varphi)\\
- 1/(\alpha-1) \, \sin((\alpha-1)\varphi)\cos(\alpha\omega) + \cos((\alpha-1)\varphi),
\end{multline*}
and \(\alpha = 856399/1572864 \approx 0.54\), \(\omega = 3\pi/2\). To
have a nonzero right-hand side we add the pressure \(p_+ :=
\sin(xy\pi)\), i.e. \(p := p_0 + p_+\) and \(\vecb{f} :=
\nabla(p_+)\). Note that since \(\vecb{f}\) is a gradient we have
\(\eta_{\vecb{f}} = 0\) in this example.

  \begin{figure}
  \begin{center}
  \includegraphics[]{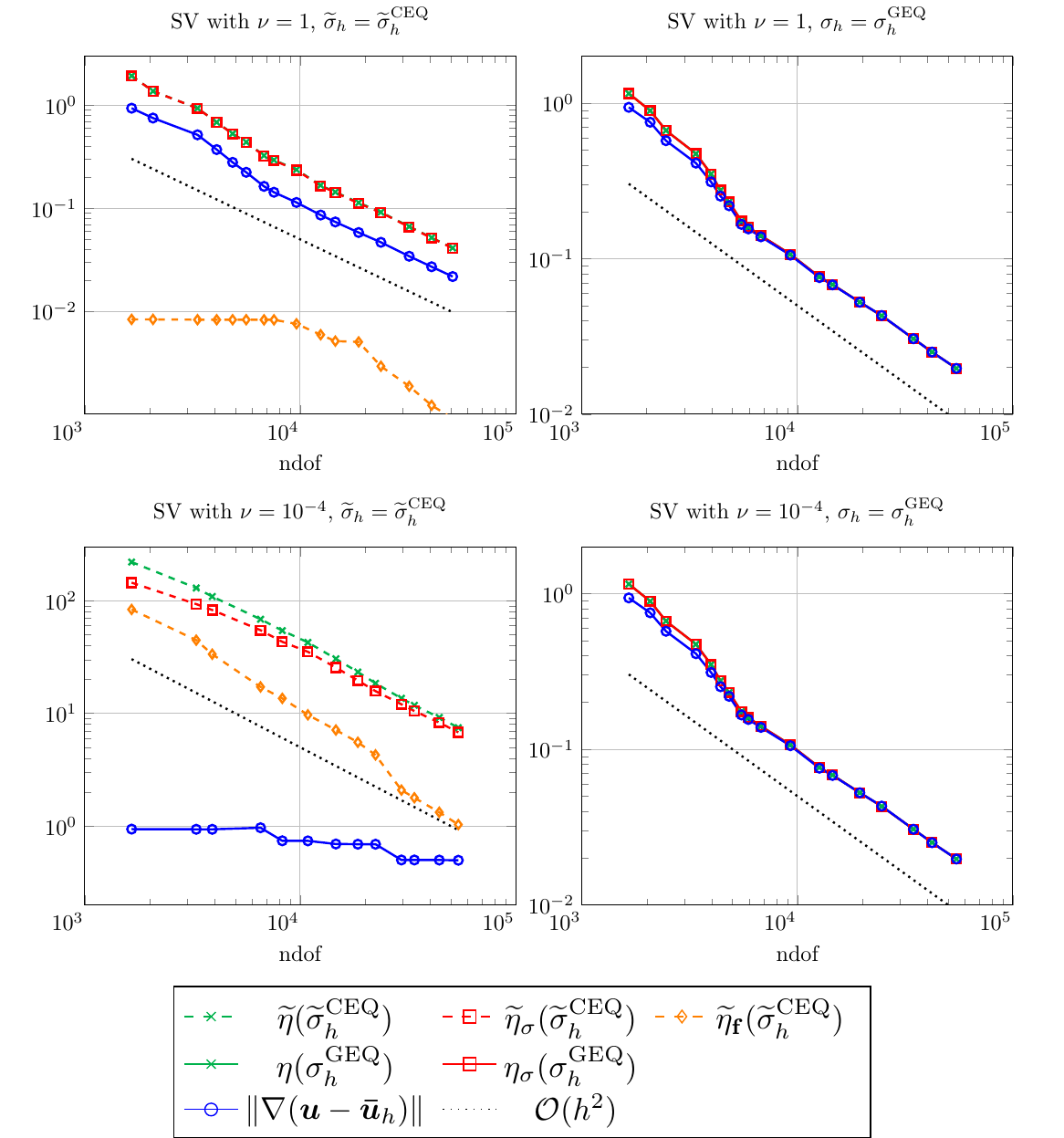}
  \vspace{5pt}
  \caption{Example from Section~\ref{lshapeexample}: Convergence history of exact error and error estimator quantities on adaptively refined meshes for SV with \(\nu=1\) (top) and \(10^{-4}\) (bottom) and \(\sigma_h = \pseudostress_h^\text{CEQ}\) (left) and \(\sigma_h = \sigma_h^\text{GEQ}\) (right).} \label{fig:extwo}
  \end{center}
\end{figure}

\begin{table}[h]
\begin{center}
\normalsize
\begin{tabular}{l@{~~}l@{~~}|@{~~}c@{~~}@{~~}c@{~~}@{~~}c@{~~}@{~~}c@{~~}@{~~}c@{~~}}
   & ref. level & $\textrm{ref}_{\textrm{tot}}-4$ & $\textrm{ref}_{\textrm{tot}}- 3$ &$\textrm{ref}_{\textrm{tot}} - 2$ & $\textrm{ref}_{\textrm{tot}}- 1$ & $\textrm{ref}_{\textrm{tot}}$\\
  \toprule 
$\nu = 1$ & $\pseudostress_h = \pseudostress_h^\text{CEQ}$ & \num{1.950869405027048}& \num{1.9619715446525452}& \num{1.9442290953329584}& \num{1.9184250005856938}& \num{1.8963380348103933}\\
  $\nu = 1$ & $\sigma_h = \sigma_h^\text{GEQ}$ & \num{1.0015155446742894}& \num{0.9964303972537422}& \num{1.0010267585249284}& \num{1.0017410760482752}& \num{0.9998534402013637}\\
$\nu = 1$ & $\sigma_h = \sigma_h^\text{LEQ}$& \num{2.3929776604352044}& \num{2.3857193360663356}& \num{2.3586137791846435}& \num{2.364500282721352}& \num{2.3708925891930854} \\
  $\nu = 10^{-4}$ & $\pseudostress_h = \pseudostress_h^\text{CEQ}$ & \num{26.788404636152507}& \num{27.171719978255144}& \num{23.567756345368593}& \num{18.402431204960283}& \num{15.004169081137151}\\
  $\nu = 10^{-4}$ & $\sigma_h = \sigma_h^\text{GEQ}$ & \num{1.0015155367164859}& \num{0.9964303855264838}& \num{1.0010267604443712}& \num{1.0017410682414458}& \num{0.9998534350439882}\\
  $\nu = 10^{-4}$ & $\sigma_h = \sigma_h^\text{LEQ}$ & \num{2.3929776603221646}& \num{2.3857193359504656}& \num{2.35861377913446}& \num{2.3645002827031982}& \num{2.370892589029835}\\  
  \bottomrule
\end{tabular}
\caption{Efficiency indices in the Example from Section~\ref{lshapeexample} on the last five adaptively refined meshes using the SV element. Here $\textrm{ref}_{\textrm{tot}}$ denotes the total number of refinement steps.}\label{extwo::eff}
\end{center}
\end{table}

Figure~\ref{fig:extwo} shows the convergence history of the exact
error and the error estimators based on the classical equilibrated
fluxes \(\pseudostress_h^\text{CEQ}\) and the pressure-robust fluxes
\(\sigma_h^\text{GEQ}\) on adaptively refined meshes where the
refinement indicators are steered by the local contributions of the
estimators. For \(\nu = 1\) both estimators are efficient, the
pressure-robust one is even asymptotically exact, and all convergence
rates are optimal. For \(\nu = 10^{-4}\) the numbers and meshes for
the pressure-robust estimator are exactly the same (which is expected,
since the discrete velocity did not change), but the adaptive meshes
for the classical estimator do not refine the corner singularity and
therefore fail to reduce the velocity error optimally. Here, the refinement
indicators only see the dominating pressure error and mark accordingly
to reduce the pressure error. Adaptation to the corner singularity
only starts when the velocity
error and the pressure error scaled with \(\nu^{-1}\) are on par.
The slow decrease of the efficiency indices in Table~\ref{extwo::eff} can be explained by the bestapproximation error reduction of the smooth pressure.
Consequently, the exact velocity error on the final mesh obtained with
refinement indicators based on \(\pseudostress_h^\text{CEQ}\) is still
larger by more than one order of magnitude compared to the error on the final mesh obtained with refinement indicators based on \(\sigma_h^\text{GEQ}\).
These observations also support the discussion in Remark~\ref{rem:classical_efficiency}.

  \begin{figure}
  \begin{center}
  \includegraphics[]{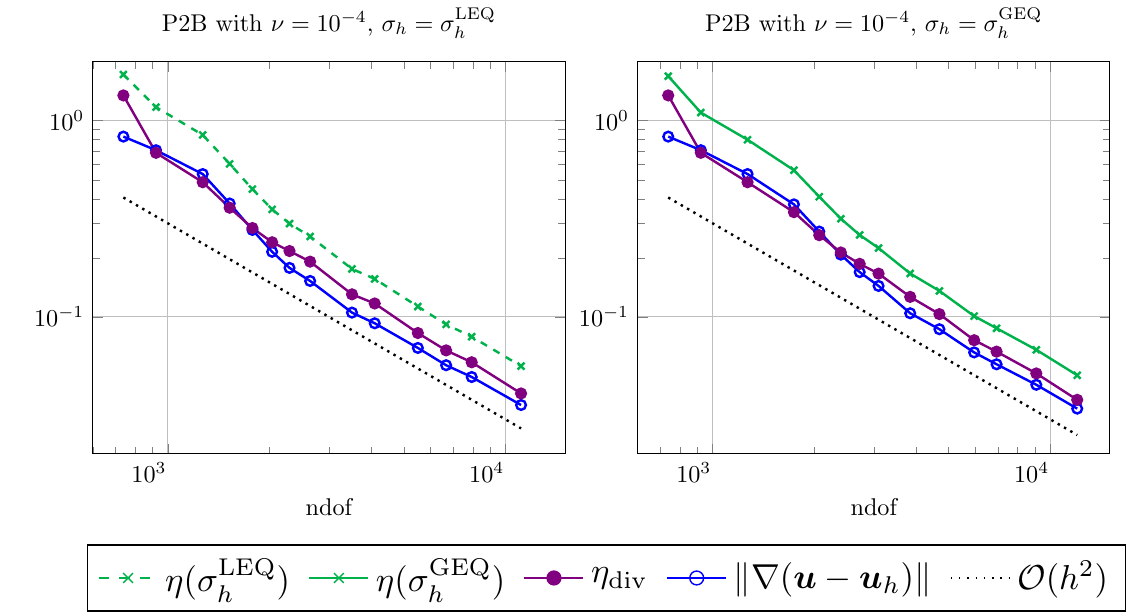}
  \vspace{5pt}
  \caption{Example from Section~\ref{lshapeexample}: Convergence history of exact error and error estimators on adaptively refined meshes for P2B with \(\nu = 10^{-4}\) and \(\sigma_h = \sigma_h^\text{LEQ}\) (left) and \(\sigma_h = \sigma_h^\text{GEQ}\) (right).} \label{fig:extwo_P2B}
  \end{center}
\end{figure}

  \begin{figure}
  \begin{center}
  \includegraphics[]{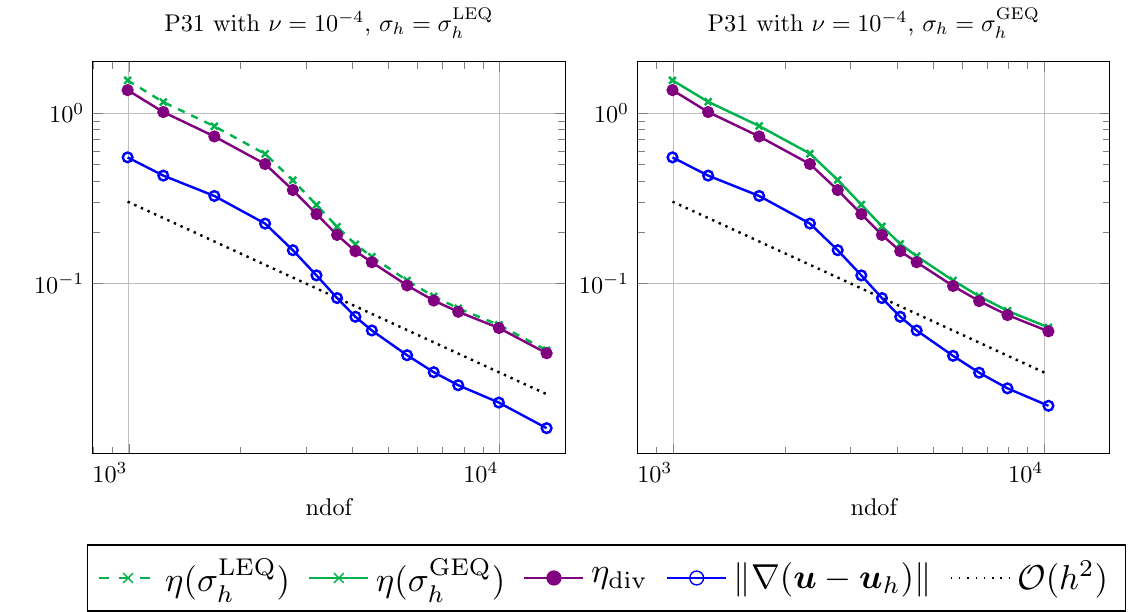}
  \vspace{5pt}
  \caption{Example from Section~\ref{lshapeexample}: Convergence history of exact error and error estimators on adaptively refined meshes for P31 with \(\nu = 10^{-4}\) and \(\sigma_h = \sigma_h^\text{LEQ}\) (left) and \(\sigma_h = \sigma_h^\text{GEQ}\) (right).} \label{fig:extwo_P31}
  \end{center}
\end{figure}

  \begin{figure}
  \begin{center}
  \includegraphics[]{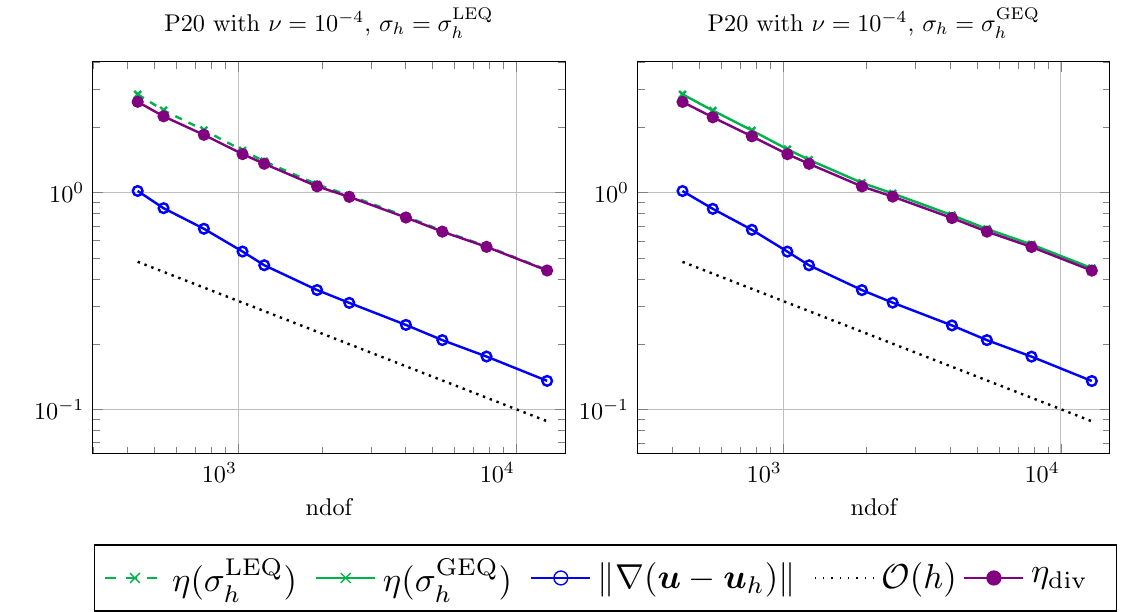}
  \vspace{5pt}
  \caption{Example from Section~\ref{lshapeexample}: Convergence history of exact error and error estimators on adaptively refined meshes for P20 with \(\nu = 10^{-4}\) and \(\sigma_h = \sigma_h^\text{LEQ}\) (left) and \(\sigma_h = \sigma_h^\text{GEQ}\) (right).} \label{fig:extwo_P20}
  \end{center}
\end{figure}

Figures~\ref{fig:extwo_P2B}-\ref{fig:extwo_P20} display results for
the three other methods P2B, P31 and P20 for the local and global
variant of our pressure-robust error estimator. Since, the discrete
velocity and the error estimator is independent of \(\nu\), we only
show the results for \(\nu = 10^{-4}\). Note that these methods are
not divergence-free but pressure-robust due to their reconstruction
operator in the right-hand side. However, this causes
\(\divergence(\vecb{u}_h) \neq 0\) and hence the contribution
\(\eta_\text{div}\) appears here which also requires a lower bound
for the inf-sup constant $c_0$. Here, we take the value \(c_0 = 0.3\)
from \cite{STOYAN1999243}. Unfortunately, this has a significant impact on the
efficiency of the error estimator that is largest for P20 and smallest
for P2B leading to still very small efficiency indices between 1.5 and
3 for both the local and the global equilibration error estimators.

\section*{Acknowledgements}
Philip L. Lederer has been funded by the Austrian Science Fund (FWF)
through the research program ``Taming complexity in partial
differential systems'' (F65) - project ``Automated discretization in
multiphysics'' (P10).

\bibliographystyle{plain}
\bibliography{lit}

\end{document}